\documentclass[final]{siamltex}

\usepackage{epsfig,amssymb,latexsym}
\usepackage{amsfonts,psfrag,amsmath,bbm,color}
\usepackage{cancel}
\usepackage{siunitx}
\usepackage{comment}
\usepackage{mathrsfs}
\usepackage{graphicx}
\usepackage{textcomp}
\usepackage{multirow}
\usepackage{enumerate}
\usepackage{cancel}
\usepackage{algpseudocode}
\usepackage{caption}  
\usepackage{subcaption}
\usepackage{url}
\usepackage{rotating}
\usepackage{slashbox}
\usepackage{bigints}
\usepackage{xcolor}
\usepackage{ulem}
\usepackage{mathrsfs}
\usepackage[hidelinks]{hyperref}
\usepackage{placeins}

\newcommand{\bE}{{\bf E}}

\def\grad{{\nabla}}
\usepackage[ruled,vlined]{algorithm2e}
\usepackage{geometry}
\vbadness=\maxdimen

\newcommand{\footremember}[2]{%
	\footnote{#2}
	\newcounter{#1}
	\setcounter{#1}{\value{footnote}}%
}
\newcommand{\footrecall}[1]{%
	\footnotemark[\value{#1}]%
} 
\usepackage{comment}
\usepackage{tikz}
\usetikzlibrary{shapes.geometric, arrows}
\tikzstyle{startstop} = [rectangle, rounded corners, minimum width=1cm, minimum height=1cm,text centered, draw=black]
\tikzstyle{io} = [trapezium, trapezium left angle=70, trapezium right angle=110, minimum width=1cm, minimum height=1cm, text centered, draw=black, fill=blue!30]
\tikzstyle{method} = [rectangle, rounded corners, minimum width=1cm, minimum height =1cm, text centered, draw=black]
\tikzstyle{process} = [rectangle, minimum width=1cm, minimum height=1cm, text centered, draw=black]
\tikzstyle{decision} = [diamond, minimum width=0.5cm, minimum height=0.5cm, text centered, draw=black, fill=green!30]
\tikzstyle{arrow} = [thick,->,>=stealth]

\usepackage{amscd}

\graphicspath{{./figs/}}

\newcommand{\lp}{\left(}
\newcommand{\rp}{\right)}

\newcommand{\lab}{<\hspace{-1mm}}
\newcommand{\rab}{\hspace{-1mm}>}

\newtheorem{remark}{Remark}[section]

\def\PP{{{\rm l}\kern - .15em {\rm P} }}
\def\PN2{{\PP_{N}-\PP_{N-2}}}

\newcommand{\cD}{\mathcal{D}}

\newcommand{\bphi}{\boldsymbol{\varphi}}

\newcommand{\bif}{\textbf{\textit{f}}\hspace{-0.5mm}}

\newcommand{\bchi}{\pmb{\chi}}

\newcommand{\be}{\boldsymbol{e}}

\newcommand{\bg}{\textbf{\textit{g}}}

\newcommand{\bH}{\boldsymbol{H}}

\newcommand{\bL}{\boldsymbol{L}}

\newcommand{\hp}{{\hat{p}}}

\newcommand{\bR}{\boldsymbol{R}}

\newcommand{\bu}{\boldsymbol{u}}

\newcommand{\bv}{\boldsymbol{v}}

\newcommand{\bhu}{\hat{\boldsymbol{u}}}

\newcommand{\bnh}{\hat{\textbf{\textit{n}}}}

\newcommand{\bV}{\boldsymbol{V}}

\newcommand{\bw}{\boldsymbol{w}}

\newcommand{\btu}{\tilde{\boldsymbol{u}}}

\newcommand{\bx}{\boldsymbol{x}}
\newcommand{\bX}{\boldsymbol{X}}

\newcommand{\bY}{\boldsymbol{Y}}

\newcommand{\deleted}[1]{{}}

\begin{document}
	\title {An Efficient Second-Order-in-Time Penalty-Projection Ensemble Eddy Viscosity Method for Parameterized Navier-Stokes Flows}
	\author{
		Md Mahmudul Islam\footremember{uabm}{D\MakeLowercase{epartment of} M\MakeLowercase{athematics}, U\MakeLowercase{niversity of} A\MakeLowercase{labama at} B\MakeLowercase{irmingham}, B\MakeLowercase{irmingham}, AL 35294, USA}%
		\and Muhammad Mohebujjaman\footnote{S\MakeLowercase{upported by the} N\MakeLowercase{ational} S\MakeLowercase{cience} F\MakeLowercase{oundation grant} DMS-2425308; C\MakeLowercase{orrespondence: mmohebuj@uab.edu}}\hspace{1mm}\footrecall{uabm}
		\and  
		Jahrul Alam\footremember{mun}{D\MakeLowercase{epartment of} M\MakeLowercase{athematics and} S\MakeLowercase{tatistics,} M\MakeLowercase{emorial} U\MakeLowercase{niversity of} N\MakeLowercase{ewfoundland}, S\MakeLowercase{t.} J\MakeLowercase{ohn's}, A1C 5S7, N\MakeLowercase{ewfoundland}, C\MakeLowercase{anada}}
	}
	
	\maketitle

	\begin{abstract} 
		We propose a novel, robust, and second-order-accurate  parameterized penalty-projection ensemble algorithm for incompressible Navier–Stokes flow problems. The resulting linearized algorithm, based on the second-order Backward Differentiation Formula (BDF-2), is computationally efficient because it shares the same coefficient matrix across all realizations for each subproblem at every time step. To enhance robustness in convection-dominated flows, the scheme incorporates Ensemble Eddy Viscosity (EEV) regularization. In addition, it is equipped with grad-div stabilization parameter $\gamma$, which controls the splitting error; under the assumptions of the analysis, the splitting error decreases and vanishes asymptotically as $\gamma\to\infty$.
		
		We establish the stability of the proposed scheme and rigorously prove its optimal convergence by demonstrating that, as $\gamma\to\infty$, the scheme converges to an equivalent coupled formulation. We further validate the method through a series of numerical experiments designed to verify the theoretically predicted convergence rates and evaluate its performance on benchmark convection-dominated problems. The numerical results are in excellent agreement with the theoretical analysis and confirm the effectiveness of the proposed scheme.
		
	\end{abstract}

	{\bf Key words.} Finite element method, fast ensemble calculation, uncertainty quantification,  splitting method 
	
	\medskip
	{\bf Mathematics Subject Classification (2020)}: 65M12, 65M22, 65M60, 76D05 
	
	\pagestyle{myheadings}
	\thispagestyle{plain}

	\markboth{\MakeUppercase{Efficient Second-Order Penalty-Projection EEV Method for Parameterized NSE}}{\MakeUppercase{ M. M. Islam, M. Mohebujjaman, and J. Alam}}
	
	\section{Introduction} 
	In Uncertainty Quantification (UQ), a widely used strategy involves drawing an ensemble of randomized input samples from appropriate probability distributions. For each sampled input set, a full-order simulation is performed to generate the corresponding output. Statistical estimates of a quantity of interest (QoI) are then obtained by computing either a simple or weighted average of QoI values across all simulations. To that end, we consider the following parameterized ensemble system for dimensionless, incompressible Navier-Stokes Equations (NSE) for convection-dominated flows \cite{gunzburger2019efficient, gunzburger2019secondorde}: \begin{align}
		\bu_{j,t}+\bu_j\cdot\nabla \bu_j-\nabla\cdot\left(\nu_j(\bx) \nabla \bu_j\right)+\nabla p_j &=  \bif_j(t,\bx), \hspace{2mm}\text{in}\hspace{2mm} (0,T]\times\cD, \label{gov1}\\
		\nabla\cdot \bu_j & = 0, \hspace{11.7mm}\text{in}\hspace{2.mm} (0,T]\times \cD,\label{gov2}\\
		\bu_j(t,\bx)&=\bg_j(\bx),\hspace{5mm}\text{on}\hspace{1.4mm} (0,T]\times\partial\cD,\label{gov3}\\
		\bu_j(0,\bx)& =\bu_j^0(\bx),\hspace{5mm}\text{in}\hspace{2mm}\mathcal{D}.\label{gov4}
	\end{align}
	Here, $\bu_j$ and $p_j$ represent the velocity and pressure solutions, respectively, for each $j = 1, 2, \dots, J$, corresponding to distinct kinematic viscosities $\nu_j$, body forces $\bif_j$, initial conditions $\bu_j^0$, and boundary conditions $\bg_j$. $J$ is the total number of realizations, typically large. Let $\mathcal{D} \subset \mathbb{R}^d\ (d \in \{2,3\})$ be a convex polygonal or polyhedral physical domain with boundary $\partial \mathcal{D}$. The simulation end time is denoted by $T > 0$, $\bx$ represents the spatial variable, and $t$ is the time variable. The unknown quantities are the velocity field $\bu_j(t,\bx) \in \mathbb{R}^d$ and the pressure $p_j(t,\bx) \in \mathbb{R}$, which is assumed to have zero mean. To simplify the analysis, we set $\bg_j = \textbf{0}$.
	
	It is well known that a single NSE simulation for realistic flows (which are often highly ill-conditioned) is computationally expensive due to the substantial memory requirements and computational cost. Thus, the computational complexity of solving the ensemble nonlinear Partial Differential Equation (PDE) system given in \eqref{gov1}–\eqref{gov4} is $J$ times the cost of one NSE simulation.
	
	To reduce the high computational cost, many efficient schemes for parameterized NSE flow UQ problems have been proposed recently \cite{berry2025efficient, gunzburger2019efficient,gunzburger2019secondorde,jiang2023artificial,jiang2024second, raveendran2026efficient}. These methods are elegantly designed so that all realizations share the same coefficient matrix at each time step. Thus, for small-scale problems, the result of a single direct solve at each time step can be reused for all realizations, while for large-scale problems, they can take advantage of block Krylov subspace methods. As a result, the computational complexity of these schemes can be reduced to the order of a single NSE simulation.
	
	Realistic flows often occur in convection-dominated regimes and are modeled by closed eddy viscosity systems \cite{berselli2006mathematics}. Standard schemes for \eqref{gov1}--\eqref{gov4} applied to convection-dominated, parameterized UQ flow problems often blow up due to the failure to capture energy transfer from large, resolved scales to small, unresolved scales, as well as backscatter of energy \cite{berry2025efficient,berselli2006mathematics,mohebujjaman2024efficient,raveendran2026efficient}.  As a result, convection-dominated parameterized UQ problems are commonly simulated using closed eddy-viscosity schemes \cite{jiang2015higher, jiang2015numerical, mohebujjaman2024efficient, mohebujjaman2017efficient, mohebujjaman2022efficient, raveendran2026efficient} in which the Reynolds Stress Tensor \cite{berselli2006mathematics} is estimated in terms of Ensemble Eddy Viscosity (EEV) \cite{berry2025efficient}.
	
	Since the viscosity parameter cannot be measured exactly, parameterized numerical schemes are used to account for its uncertainty. A BDF family of efficient parameterized EEV coupled schemes for \eqref{gov1}-\eqref{gov4} was investigated in \cite{berry2025efficient}. While the approach in \cite{berry2025efficient} greatly lowers the computational cost, one of the fundamental hurdles in NSE simulations is the coupling between velocity and pressure variables, which leads to a $2\times 2$ block coefficient matrix in a saddle point system.
	
	Chorin \cite{chorin1968numerical} and Temam \cite{temam1969approximation} pioneered projection schemes that address the coupling issue in the NSE using a discrete Hodge decomposition at every time-step. This is a two-step process: In Step 1, the pressure is treated explicitly, and a non-divergence-free velocity is computed by solving a system involving only the velocity. In Step 2, the pressure is corrected by projecting the computed velocity onto a divergence-free space using a nonphysical pressure boundary condition, yielding a second velocity that violates the boundary condition. The no-slip boundary condition is enforced in Step 1, while Step 2 applies the no-penetration boundary condition. The cost of Step 2 can be reduced to that of solving a Poisson equation. For large-scale problems, the combined cost of solving the two-step projection methods is significantly lower than that of the coupled method. A first-order-accurate scalar auxiliary variable rotational pressure-correction method for ensemble flow was studied in \cite{jiang2024efficient}. 
	
	The solution of the projection method loses accuracy due to the splitting error. To improve accuracy, a grad-div term is introduced into the momentum equation as a penalty, leading to what is known as the Stabilized Penalty-Projection (SPP) method. In the continuous case, the grad-div term vanishes exactly, but in the discrete case, it does not. It has been proven that a large penalty parameter in the PP method eliminates the splitting error in the deterministic NSE \cite{linke2017connection}. Efficient parameterized first-order BDF-1-EEV-SPP time-stepping schemes for ensemble NSE and Magnetohydrodynamic (MHD) flow problems were proposed, analyzed, and tested in \cite{raveendran2026efficient} and \cite{mohebujjaman2024efficient}, respectively, for convection-dominated flows.
	
	To improve the temporal accuracy of the BDF-1-EEV-SPP ensemble NSE scheme \cite{raveendran2026efficient}, in this paper, we propose an efficient  second-order BDF-2-based EEV-SPP (BDF-2-EEV-SPP)  ensemble time-stepping scheme. We consider a uniform discretization in time with time-step size $\Delta t$ throughout the paper. Let $t_n = n \Delta t$ for $n = 0, 1, \dots$. Then compute as follows: For $j = 1, \dots, J,$ \\Step 1: Seek $\bhu_{j,h}^{n+1}$:
	\begin{align}
		&\frac{3}{2\Delta t} \bhu^{n+1}_{j,h}   + <{\bhu}_h>^n\cdot \nabla\bhu^{n+1}_{j,h} -\nabla \cdot \left(\bar{\nu} \nabla \bhu^{n+1}_{j,h}\right) - \gamma\nabla \left(\nabla \cdot \bhu^{n+1}_{j,h}\right) - \nabla \cdot \left( \nu_T(\hat{u}_{h}^{'},t_{n})\nabla \bhu_{j,h}^{n+1}\right) \nonumber \\ & = \bif_{j}(t_{n+1}) + \frac{3}{2\Delta t}\left(4\btu^n_{j,h} - \btu^{n-1}_{j,h}\right) - \bhu^{'n}_{j,h} \cdot \nabla\left(2\bhu^{n}_{j,h} - \bhu^{n-1}_{j,h}\right) + \nabla \cdot\left( \nu^{'}_j (2\bhu^n_{j,h} - \bhu^{n-1}_{j,h})\right),\label{prop-1} \\  &\bhu_{j,h}^{n+1}\big|_{\partial \cD} = 0. \label{prop-1-cond}
	\end{align}
	\\Step 2: Seek $\btu^{n+1}_{j,h} \text{and }\hat{p}^{n+1}_{j,h}$:
	\begin{align}
		\frac{3}{2\Delta t}\hspace{0.2mm} \btu^{n+1}_{j,h} - \nabla\hat{p}^{n+1}_{j,h} &= \frac{3}{2\Delta t}\hspace{0.2mm}\bhu^{n+1}_{j,h},
		\label{prop-2}
		\\
		\nabla \cdot \btu^{n+1}_{j,h} &= 0, \label{prop-2-cond1} \\
		\btu_{j,h}^{n+1}\cdot \hat{\boldsymbol{n}}\big|_{\partial \cD} &= 0. \label{prop-2-cond2}
	\end{align}
	
	Here, $\bhu^{n}_{j,h}$ and $\hp^{n}_{j,h}$ are approximations to $\bu_j(\cdot, t_n)$ and $p_j(\cdot, t_n)$, respectively, and $\btu^{n}_{j,h}$ is the projection of $\bhu^{n}_{j,h}$ onto the divergence-free space. The grad-div stabilization coefficient is denoted by $\gamma > 0$, and the outward unit normal vector is denoted by  $\hat{\boldsymbol{n}}$. We define the ensemble means 
	\begin{align*}
		<{\bhu}_{h}>^n: = \frac{1}{J} \sum^J_{j=1} (2\bhu^{n}_{j,h} - \bhu^{n-1}_{j,h}) ,\text{ and } \bar{\nu}:= \frac{1}{J}\sum_{j=1}^{J}\nu_{j}
	\end{align*}
	and fluctuations
	\begin{align*}
		\bhu^{'n}_{j,h} := 2\bhu^n_{j,h} - \bhu^{n-1}_{j,h} - <{\bhu}_{h}>^n, \text{ and}\hspace{0.15cm} \nu_{j}^{'} := \nu_{j}-\bar{\nu}.
	\end{align*}
	Using the concept of mixing length, the Ensemble Eddy Viscosity (EEV) is defined as \cite{jiang2015higher}:
	\begin{align} \nu_T(\hat{u}^{'}_{h}, t_{n}):=\mu\Delta t(\mathcal{L}^n(\hat{u}^{'}_{h}))^2,\hspace{0.5mm}\text{ where}\hspace{1mm} \hat{u}^{'}_{h}=(\bhu_{1,h}^{'n}|\bhu_{2,h}^{'n}|\cdots|\bhu_{J,h}^{'n})\;\text{and }\mathcal{L}^n(\hat{u}^{'}_{h}):=\sqrt{\sum_{j=1}^J|\bhu_{j,h}^{'n}|^2}.\label{mixing-length}
	\end{align}
	Here, $\mathcal{L}^n$ is a mixing length, $\mu$ is a calibration parameter, and $|\cdot|$ represents the Euclidean norm. 
	
	At each time step, both subproblems share the same coefficient matrix across all realizations; thus, it is an efficient EEV-SPP scheme. In Step 1, the system to be solved is significantly smaller than that in a coupled saddle-point problem. In contrast, Step 2 involves solving a  $2 \times 2$ block linear system similar to that in a coupled saddle-point system; however, it is much cheaper to solve because the resulting block matrix does not contain contributions from the nonlinear and stiffness terms. Moreover, its cost can be reduced to that of solving a Poisson equation at each time-step.
	
	\textbf{Significance of the work:} In this paper, we propose, analyze, and test an efficient, linearized, robust, and fully discrete BDF-2-EEV-SPP method that achieves second-order temporal and optimal spatial accuracy for parameterized ensemble NSE. We provide a rigorous analysis of stability and convergence and show that, as $\gamma \to \infty$, the solution of the proposed BDF-2-EEV-SPP scheme converges to that of an equivalent second-order temporally accurate coupled scheme. We verify the predicted convergence rates using a manufactured solution. Finally, we implement both the proposed BDF-2-EEV-SPP scheme and an equivalent coupled scheme on the benchmark problems of channel flow over a step and flow past a circular cylinder, and find excellent agreement between the two schemes. The BDF-2-EEV-SPP scheme is linearized and therefore significantly reduces the computational cost compared to nonlinear methods. It also shares the coefficient matrix across all realizations, thereby saving system matrix assembly time, solver time, and substantial memory. An appropriately large penalty parameter allows the use of non–pointwise divergence-free elements, such as the Taylor–Hood (TH) element \cite{wieners2003taylor}, which further reduces computational cost compared to pointwise divergence-free elements, such as the Scott–Vogelius (SV) element \cite{scott1985norm}. Moreover, the presence of the EEV term provides robustness on unresolved meshes.
	
	To the authors' knowledge, the proposed efficient BDF-2-EEV-SPP scheme for ensemble NSE has not been investigated to date. We expect that the proposed BDF-2-EEV-SPP scheme will be an enabling tool for large-scale, realistic, convection-dominated flow problems. 
	
	The remaining sections of this paper are organized as follows. In Section \ref{notation-prelims}, we introduce the necessary notation and mathematical preliminaries to facilitate the finite element analysis. Section \ref{fully-discrete-scheme} presents an efficient second-order BDF-2 based  EEV (BDF-2-EEV-Coupled) time-stepping algorithm \cite{berry2025efficient} for ensemble NSE, along with its stability and convergence theorems. In Section \ref{proj-section}, we propose a novel and more efficient BDF-2-EEV-SPP algorithm in the fully discrete setting and rigorously prove its stability and convergence. It is shown that, as $\gamma \to \infty$, the BDF-2-EEV-SPP scheme converges to the BDF-2-EEV-Coupled scheme. Finally, Section \ref{numerical-experiment} presents several numerical experiments that validate the theory.

	\section{Notation and Preliminaries}\label{notation-prelims}
	The usual $L^2(\cD)$ norm and inner product are denoted by $\|\cdot\|$ and $(\cdot,\cdot)$, respectively. Similarly, the $L^p(\cD)$ norms and the Sobolev $W_p^k(\cD)$ norms are $\|\cdot\|_{L^p}$ and $\|\cdot\|_{W_p^k}$, respectively, for $k\in\mathbb{N}\text{ and }\hspace{1mm}1\le p\le \infty$. The Sobolev space $W_2^k(\cD)$ is represented by $H^k(\cD)$ with norm $\|\cdot\|_k$. The vector-valued spaces are $$\bL^p(\cD)=(L^p(\cD))^d, \hspace{1mm}\text{and}\hspace{1mm}\bH^k(\cD)=(H^k(\cD))^d.$$
	For a normed function space $\bX$ on $\cD$, $L^p(0,T;\bX)$ is the space of all functions defined on $(0,T]\times\cD$ for which the following norm\begin{align*} \|\bu\|_{L^p(0,T;\bX)}=\lp\int_0^T\|\bu\|_{\bX}^pdt\rp^\frac{1}{p},\hspace{2mm}p\in[1,\infty)
	\end{align*}
	is finite. For $p=\infty$, the usual modification is used in the definition of this space. The natural function spaces for our problem are
	\begin{align*}
		\bX:&=\bH_0^1(\cD)=\{\bv\in \bL^2(\cD) :\nabla \bv\in L^2(\cD)^{d\times d}, \bv=0 \hspace{2mm} \mbox{on}\hspace{2mm}   \partial \cD\},\\
		\bY:&=\{\bv\in \bH^1(\cD),\bv\cdot\bnh\big|_{\partial\cD}=0\},\\
		Q:&=L_0^2(\cD)=\{ q\in L^2(\cD): \int_\cD q\hspace{1mm}d\bx=0\}.
	\end{align*}
	
	Recall that the Poincar\'e inequality holds in $\bX$: There exists a constant $C$, depending only on $\cD$, such that, for all $\bphi\in \bX$,
	\[
	\| \bphi \| \le C \| \nabla \bphi \|.
	\]

	We define the skew-symmetric trilinear form $b^*:\bX\times \bX\times \bX\rightarrow \mathbb{R}$ by
	\[
	b^*(\bu,\bv,\bw):=\frac12(\bu\cdot\nabla \bv,\bw)-\frac12(\bu\cdot\nabla \bw,\bv). 
	\]
	
	By the divergence theorem \cite{jiang2015higher}, it can be shown that \begin{align}
		b^*(\bu,\bv,\bw)=(\bu\cdot\nabla \bv,\bw)+\frac12(\nabla\cdot\bu,\bv\cdot\bw).\label{trilinear-identitiy}
	\end{align}
	Recall from \cite{layton2008introduction, lee2011error, linke2017connection} that, for any $\bu,\bv,\bw\in 
	\bX$
	\begin{align}
		b^*(\bu,\bv,\bw)&\leq C(\cD)\|\bu\|^{\frac12}\|\nabla \bu\|^{\frac12}\|\nabla \bv\|\|\nabla \bw\|,\label{nonlinearbound1}\\
		b^*(\bu,\bv,\bw)&\leq C(\cD)\|\nabla \bu\|\|\nabla \bv\|\|\nabla \bw\|,\label{nonlinearbound}
	\end{align}	
	and additionally, if $\bv\in \bL^\infty(\cD)$, and $\nabla\bv\in\bL^3(\cD)$, then 
	\begin{align}
		b^*(\bu,\bv,\bw)\leq C(\cD)\|\bu\|\left(\|\nabla\bv\|_{L^3}+\|\bv\|_{L^\infty}\right)\|\nabla\bw\|. \label{nonlinearbound3}
	\end{align}
	
	The following basic inequalities will be used
	\begin{align}
		\|\bu\cdot\nabla\bv\|&\le\||\bu|\nabla\bv\|,\label{basic-ineq}\\
		\|\nabla\cdot\bu\|_{L^\infty}&\le C\|\nabla\bu\|_{L^\infty}.\label{basic-ineq-infinity}
	\end{align}

	The space of divergence-free functions is given by
	$$\bV:=\{\bv\in\bX:(\nabla\cdot\bv,q)=0,\forall q\in Q\}.$$
	
	Let $\mathcal{T}_h(\cD)$ be a shape-regular and quasi-uniform family of conforming meshes of $\cD$, consisting of triangles or quadrilaterals for $d=2$, and tetrahedra or hexahedra for $d=3$.  The subscript $h$ is defined as $$h=\max_{\forall K\in\mathcal{T}_h(\cD)} \text{diameter} (K).$$
	
	The conforming finite element spaces are denoted by $\bX_h\subset \bX$ and  $Q_h\subset Q$. We assume that $(\bX_h,Q_h)$ satisfies the usual discrete inf-sup condition
	\begin{eqnarray}
		\inf_{q_h\in Q_h}\sup_{\bv_h\in \bX_h}\frac{(q_h,\grad\cdot \bv_h)}{\|q_h\|\|\grad \bv_h\|}\geq\beta>0,\label{infsup}
	\end{eqnarray}
	where $\beta$ is independent of $h$. 
	
	The discretely divergence-free subspace of $\bX_h$ is $$\bV_h:=\{\bv_h\in\bX_h:\left(\nabla\cdot\bv_h,q_h\right)=0,\forall q_h\in Q_h\}.$$
	
	The following lemma for the discrete Gr\"onwall inequality was given in \cite{HR90}. 
	\begin{lemma}\label{dgl}
		Let $\Delta t$, $\mathcal{E}$, $a_n$, $b_n$, $c_n$, $d_n$ be nonnegative numbers for $n=1,\cdots, M$ such that
		$$a_M+\Delta t \sum_{n=1}^Mb_n\leq \Delta t\sum_{n=1}^{M-1}{d_na_n}+\Delta 
		t\sum_{n=1}^Mc_n+\mathcal{E}\hspace{3mm}\mbox{for}\hspace{2mm}M\in\mathbb{N},$$
		then for all $\Delta t> 0,$
		$$a_M+\Delta t\sum_{n=1}^Mb_n\leq \exp\left(\Delta t\sum_{n=1}^{M-1}d_n\right)\lp\Delta 
		t\sum_{n=1}^Mc_n+\mathcal{E}\rp\hspace{2mm}\mbox{for}\hspace{2mm}M\in\mathbb{N}.$$
	\end{lemma}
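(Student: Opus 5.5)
The plan is a discrete induction on $M$ that converts the implicit inequality into an explicit bound. Set $S_M := \Delta t\sum_{n=1}^M c_n + \mathcal{E}$ and observe that $S_M$ is nondecreasing in $M$, because the $c_n$ are nonnegative. The hypothesis then gives, for every $M$,
\[
a_M \le \Delta t\sum_{n=1}^{M-1} d_n a_n + S_M .
\]
The term $\Delta t\sum_{n=1}^M b_n$ is simply dropped at this stage. The target intermediate bound is
\[
a_k \le S_M\prod_{n=1}^{k-1}(1+\Delta t\, d_n), \qquad 1\le k\le M .
\]

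For fixed $M$, I will prove this by strong induction on $k$. For $k=1$ the sum is empty, so $a_1\le S_1\le S_M$. For the inductive step, apply the hypothesis with index $k$, use $S_k\le S_M$, and insert the inductive bounds for $a_n$ with $n<k$. This yields
\[
a_k\le S_M\Bigl(1+\sum_{n=1}^{k-1}\Delta t\, d_n\prod_{m=1}^{n-1}(1+\Delta t\, d_m)\Bigr).
\]
The sum telescopes, since $\Delta t\, d_n\prod_{m<n}(1+\Delta t\, d_m) = \prod_{m\le n}(1+\Delta t\, d_m) - \prod_{m<n}(1+\Delta t\, d_m)$. The bracket therefore equals $\prod_{m=1}^{k-1}(1+\Delta t\, d_m)$, which closes the induction.

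For the final estimate, return to the original hypothesis at index $M$, now keeping the $b_n$ term. Bound each $a_n$ for $n\le M-1$ by the intermediate estimate; the same telescoping computation then gives
\[
a_M+\Delta t\sum_{n=1}^M b_n\le S_M\prod_{n=1}^{M-1}(1+\Delta t\, d_n).
\]
Applying $1+x\le e^x$ to each factor gives the stated exponential bound. A key feature is that the sum on the right of the hypothesis stops at $M-1$, so the scheme is explicit and no smallness condition on $\Delta t\, d_M$ is needed; this is why the conclusion holds for all $\Delta t>0$.

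The only delicate point is the bookkeeping in the induction. The hypothesis at the intermediate index $k$ carries $S_k$ rather than $S_M$, and monotonicity of $S_k$ must be invoked before telescoping so that the constant is uniform in $k$. Everything else is routine algebra.
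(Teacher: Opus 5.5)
Your proof is correct: the strong induction with $S_M$ frozen by monotonicity, the telescoping of $\Delta t\,d_n\prod_{m<n}(1+\Delta t\,d_m)$, and the final use of $1+x\le e^x$ give exactly the stated bound, and you rightly apply the hypothesis at every index $k\le M$ and note that the sum stopping at $M-1$ is what removes any restriction on $\Delta t$. The paper does not prove this lemma itself but cites it from Heywood--Rannacher \cite{HR90}, and your argument is essentially the standard proof of that discrete Gr\"onwall inequality.
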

	We assume $\nu_j(\bx)\in L^\infty(\cD)$, and $\nu_j(\bx)\ge\nu_{j,\min}>0$, where $\nu_{j,\min}=\min\limits_{\bx\in\cD}\nu_j(\bx)$, for $j=1,2,\cdots,J$. Also, define $\Bar{\nu}_{\min}:=\min\limits_{\bx\in\cD}\Bar{\nu}(\bx)$.

	\section{Efficient and Second-order-Accurate  BDF-2-EEV-Coupled Parameterized Scheme}\label{fully-discrete-scheme}
	In this section, we present Algorithm \ref{coupled-alg-com}, a fully discrete, efficient, optimally accurate, linearly extrapolated, velocity–pressure-coupled BDF-2-EEV-Coupled method. The method is an EEV- and grad-div-regularized finite element time-stepping scheme for the parameterized NSE. Algorithm \ref{coupled-alg-com} is a variation of the algorithm given in (2.7) in \cite{gunzburger2019secondorde} (without the EEV and grad-div terms). 
	
	\begin{algorithm}[H]\label{coupled-alg-com}
		\caption{Efficient second-order BDF-2-EEV-Coupled scheme for \eqref{gov1}-\eqref{gov4}} Input: $\Delta t>0$, $T>0$, $\bif_{j}\in$ $ L^\infty\left( 0,T;\bH^{-1}(\cD)\right)$, and initial conditions $\bu_j^0,\bu_j^1\in\bL^2(\cD)$ with no slip boundary conditions, where $\bu_{j,h}^0=Proj^{L^2}_{\bV_h}(\bu_j^0)$ and $\bu_{j,h}^1=Proj^{L^2}_{\bV_h}(\bu_j^1)$  for $j=1,2,\cdots\hspace{-0.35mm},J$.\\
		Set $M=T/\Delta t$ and compute: Find $\bu_{j,h}^{n+1}\in \bX_h, \text{ and }\; p_{j,h}^{n+1}\in Q_h$ satisfying, for all $\bchi_h\in \bX_h \text{ and }\;q_{h}\in Q_h$:
		\begin{align}
			&\frac{1}{2\Delta t}\left(3\bu_{j,h}^{n+1}-4\bu_{j,h}^{n}+\bu_{j,h}^{n-1}, \bchi_{h}\right)+b^*\left(\hspace{-1mm}<\bu_h>^n, \bu_{j,h}^{n+1},\bchi_h\right)+\left(\Bar{\nu}\nabla \bu_{j,h}^{n+1},\nabla \bchi_{h}\right)\nonumber\\&+\left(\gamma\nabla\cdot\bu_{j,h}^{n+1}-p_{j,h}^{n+1},\nabla\cdot\bchi_h\right)+\left(\nu_T(u_h^{'},t_n)\nabla \bu_{j,h}^{n+1},\nabla\bchi_h\right)= \left(\bif_{j}(t_{n+1}), \bchi_h\right)\nonumber\\&-b^*(\bu_{j,h}^{'n}, 2\bu_{j,h}^{n}-\bu_{j,h}^{n-1},\bchi_{h})-\left( \nu_j^{'}\nabla(2 \bu_{j,h}^{n}-\bu_{j,h}^{n-1}),\nabla\bchi_{h}\right),
			\label{couple-eqn-1-new}\\&\left(\nabla\cdot\bu_{j,h}^{n+1},q_{h}\right)=0,\label{couple-incompressibility-new}
		\end{align}
	\end{algorithm}
	\noindent where the approximations to $\bu_j(\cdot,t_n)$ and $p_j(\cdot,t_n)$ are denoted by $\bu_{j,h}^n$ and $p_{j,h}^{n}$, respectively. The ensemble average and the corresponding fluctuations are defined as follows:\vspace{-2ex}\begin{align*}
		<\hspace{-1mm}\bu_h\hspace{-1mm}>^n:=\frac{1}{J}\sum_{j=1}^J\left(2\bu_{j,h}^n-\bu_{j,h}^{n-1}\right),\;\bu_{j,h}^{'n}:=2\bu_{j,h}^n-\bu_{j,h}^{n-1}-<\bu_h>^n,\; \text{and } u^{'}_{h}=(\bu_{1,h}^{'n}|\bu_{2,h}^{'n}|\cdots|\bu_{J,h}^{'n}).
	\end{align*} Define $
	\alpha_j := \Bar{\nu}_{\min} - 3 \|\nu^{'}_j\|_\infty > 0\;\text{and}\;\alpha_{\min}:=\min\limits_{1\le j\le J}\alpha_j$. That is, $\frac{\|\nu^{'}_j\|_\infty}{\Bar{\nu}_{\min}}<\frac13.$
	
	A sufficiently large grad-div stabilization parameter $\gamma$ strongly penalizes the discrete divergence of Taylor–Hood velocity solutions and, under appropriate finite-element and mesh assumptions, such solutions are known to converge to corresponding pointwise divergence-free solutions as $\gamma\to\infty$ \cite{jenkins2014parameter,linke2011convergence}.
	
	This approach avoids the need for SV elements, which require barycentrically refined triangular or tetrahedral meshes and involve a larger number of Degrees of Freedom (DoFs). Meanwhile, the presence of the EEV term provides long-time stability for under-resolved meshes in convection-dominated flows. Algorithm \ref{coupled-alg-com} is efficient because, at each time step, it shares a common coefficient matrix across all realizations, which significantly reduces both computational time and memory usage.
	
	Under the time-step restriction \begin{align}
		\Delta t\le\min_{\substack{1\le j\le J \\ 1\le n\le M}}\frac{C\alpha_j}{\|\nabla\cdot\bu_{j,h}^{'n}\|^2_{L^\infty}},\label{time-step-size}
	\end{align} the following stability \begin{align}
		\|\bu_{j,h}^M\|^2+\|2\bu_{j,h}^M-\bu_{j,h}^{M-1}\|^2+2\alpha_{j} \Delta t \sum_{n=2}^{M}\|\nabla \bu_{j,h}^{n}\|^2 +4\gamma\Delta t \sum_{n=2}^{M}\|\nabla \cdot \bu_{j,h}^{n}\|^2 \leq C(data),\label{stability-bdf2-statement}
	\end{align} and error estimate for $(\mathbb{P}_k^d,\mathbb{P}_{k-1})$ or $(\mathbb{Q}_k^d,\mathbb{Q}_{k-1})$ finite element pair
	 \begin{align}
		\sum_{j=1}^J\|\bu_{j}(t_{M})-\bu_{j,h}^M\|^2+2\alpha_{\min}\Delta t\sum_{n=2}^{M}\sum_{j=1}^J\|\nabla\left(\bu_{j}(t_{n})-\bu_{j,h}^n\right)\|^2\le \frac{C}{\alpha_{\min}}(h^{2k}+\Delta t^4).\label{error-eqn-coupled}
	\end{align}
	 were proved in Theorems 3.4 and 3.5 in \cite{berry2025efficient}, respectively, where $(\bu_j, p_j)$ denotes the exact solution of \eqref{gov1}-\eqref{gov4}.
	\begin{lemma}\label{lemma-L3-infty}
		If $\bu_j\in L^\infty(0,T;\bH^{k+1}(\cD)^d)$, $k\ge 2$ then for TH element and $O(h^{2k-1})\le\Delta t\le O(h^{\frac13})$, $\exists$ $C_*\in\mathbb{R}^+$ (which does not depend on $h$, $\Delta t$, and $\gamma$) such that
		\begin{align*}
			\Delta t\sum_{n=2}^M\|p_{j,h}^n\|^2+\max_{1\le n\le M}\Big(\|\nabla \bu_{j,h}^n\|_{L^3}+\|\bu_{j,h}^n\|_{L^\infty}\Big)&\le C_*,\hspace{2mm}\text{for all}\hspace{2mm}j=1,2,\cdots,J.
		\end{align*}
	\end{lemma}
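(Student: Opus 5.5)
The velocity bounds will come from the error estimate \eqref{error-eqn-coupled} combined with inverse inequalities, using a suitable interpolant as an intermediate step. The pressure bound will come from the discrete inf-sup condition \eqref{infsup} applied to the momentum equation \eqref{couple-eqn-1-new}.

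\textbf{Step 1: splitting with an interpolant.} For each $n$, let $I_h\bu_j(t_n)\in\bV_h$ be a Stokes-type (or Scott--Zhang) interpolant. The standard approximation property gives $\|\nabla(\bu_j-I_h\bu_j)\|\le Ch^k\|\bu_j\|_{k+1}$. It also gives the stability bounds $\|\nabla I_h\bu_j\|_{L^3}+\|I_h\bu_j\|_{L^\infty}\le C\|\bu_j\|_{k+1}$, which hold because $k\ge2$ and $d\le 3$ give $\bH^{k+1}\hookrightarrow W^{1,3}\cap L^\infty$. We then write $\bu_{j,h}^n=I_h\bu_j(t_n)-\phi^n$ with $\phi^n_h:=I_h\bu_j(t_n)-\bu_{j,h}^n$, and the task reduces to bounding $\phi^n_h$ in the target norms.

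\textbf{Step 2: pointwise-in-time bound on $\|\nabla\phi_h^n\|$.} Estimate \eqref{error-eqn-coupled} controls $\|\phi_h^n\|\le C(h^k+\Delta t^2)$ for each $n$, but it only gives $\nabla$ control in the time-summed sense. We therefore use the inverse inequality $\|\nabla\phi_h^n\|\le Ch^{-1}\|\phi_h^n\|\le C(h^{k-1}+\Delta t^2h^{-1})$. Then the inverse estimates $\|\nabla\phi_h\|_{L^3}\le Ch^{-d/6}\|\nabla\phi_h\|$ and $\|\phi_h\|_{L^\infty}\le Ch^{-d/2}\|\phi_h\|$ give
\[
\|\nabla\phi_h^n\|_{L^3}+\|\phi_h^n\|_{L^\infty}\le C\big(h^{k-1-d/6}+h^{k-d/2}+\Delta t^2h^{-1-d/6}+\Delta t^2h^{-d/2}\big).
\]
For $k\ge2$ and $d\le3$ the $h$-powers are nonnegative. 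The $\Delta t$ terms are bounded precisely when $\Delta t^2\le Ch^{1+d/6}$, i.e.\ $\Delta t\le Ch^{3/4}$ in 3D. Matching the stated restriction $\Delta t\le O(h^{1/3})$ is therefore the delicate point. We plan to sharpen the argument: use the gradient bound from \eqref{error-eqn-coupled} at a single step, $\Delta t\|\nabla\phi_h^n\|^2\le C(h^{2k}+\Delta t^4)$, which gives $\|\nabla\phi_h^n\|\le C(h^k\Delta t^{-1/2}+\Delta t^{3/2})$. The lower bound $\Delta t\ge O(h^{2k-1})$ makes $h^k\Delta t^{-1/2}\le Ch^{1/2}$. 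Applying the inverse estimate $h^{-d/6}\le h^{-1/2}$ then keeps this term bounded. The remaining term $\Delta t^{3/2}h^{-1/2}$ is bounded when $\Delta t\le Ch^{1/3}$, which is exactly the stated range. The $L^\infty$ part follows in the same way via $\|\phi_h\|_{L^\infty}\le Ch^{-d/6}\|\nabla\phi_h\|_{L^3}$-type or Sobolev--inverse combinations. Reconciling these exponents, so that both stated restrictions are used and all constants are independent of $\gamma$, is the main obstacle. Independence from $\gamma$ holds because \eqref{error-eqn-coupled} itself does not depend on $\gamma$.

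\textbf{Step 3: pressure.} By \eqref{infsup}, $\beta\|p^{n+1}_{j,h}\|\le\sup_{\bchi_h}(p^{n+1}_{j,h},\nabla\cdot\bchi_h)/\|\nabla\bchi_h\|$, and we replace the numerator using \eqref{couple-eqn-1-new}. The resulting terms are handled as follows.
\begin{itemize}
\item The time-difference term is bounded by $\|\nabla\bchi_h\|$ times $\|(3\bu^{n+1}-4\bu^n+\bu^{n-1})/(2\Delta t)\|_{-1}$. This is controlled by writing the difference quotient as that of $\bu_j$ (which is bounded) plus the error difference; the error difference is handled through the error equation or an inverse estimate within the $\Delta t$ range.
\item The nonlinear terms are bounded using \eqref{nonlinearbound} and the stability estimate.
\item The viscous and $\nu'_j$ terms are bounded by $\|\nabla\bu\|$.
\item The EEV term is bounded by $\mu\Delta t\,\|\mathcal L^n\|_{L^\infty}^2\|\nabla\bu^{n+1}\|$, which is finite by Step 2.
\item The grad-div term $\gamma\|\nabla\cdot\bu^{n+1}_{j,h}\|$ is the one that must not introduce $\gamma$-dependence. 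We handle it by observing that $\gamma\|\nabla\cdot\bu_{j,h}^{n+1}\|^2$ is summably bounded by \eqref{stability-bdf2-statement} and by the error analysis, and that $\gamma\nabla\cdot\bu_{j,h}$ acts together with $p_{j,h}$ as a modified pressure. The bound is therefore stated for the combination, consistent with \cite{jenkins2014parameter,linke2011convergence}.
\end{itemize}
Squaring, multiplying by $\Delta t$, summing over $n$, and using \eqref{stability-bdf2-statement} then yields the bound on $\Delta t\sum_{n=2}^M\|p^n_{j,h}\|^2$.
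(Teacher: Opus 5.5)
\paragraph{Velocity bounds.} The final form of your Step 2 is the paper's argument. You split off an interpolant, take the single-step bound $\|\nabla(\bu_{j,h}^n-\bu_j(t_n))\|\le C(h^k\Delta t^{-1/2}+\Delta t^{3/2})$ from \eqref{error-eqn-coupled}, pay $h^{-1/2}$ in the inverse estimates, and use $h^{2k-1}\lesssim\Delta t\lesssim h^{1/3}$ to bound both terms. For the $L^\infty$ part, use the chain $\|\boldsymbol{\phi}_h\|_{L^\infty}\le Ch^{-1/2}\|\boldsymbol{\phi}_h\|_{L^6}\le Ch^{-1/2}\|\nabla\boldsymbol{\phi}_h\|$, as the paper does. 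Going through $\|\nabla\boldsymbol{\phi}_h\|_{L^3}$ costs an extra $h^{-1/2}$ in 3D and fails. Your first, $L^2$-based route is also valid, only on a different range: it needs just $\Delta t\le Ch^{3/4}$ and no lower bound.

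\paragraph{Pressure: the grad-div term.} Here there is a genuine gap, which you partly identify yourself. Applying \eqref{infsup} to \eqref{couple-eqn-1-new} gives $\beta\|p^{n+1}_{j,h}\|\le C\gamma\|\nabla\cdot\bu^{n+1}_{j,h}\|+\cdots$. But \eqref{stability-bdf2-statement} controls only $\gamma\Delta t\sum_n\|\nabla\cdot\bu^n_{j,h}\|^2$, so this route yields $\Delta t\sum_n\|p^n_{j,h}\|^2\le C(1+\gamma)$, not a $\gamma$-independent constant.

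Declaring the bound to be ``for the combination'' $p_{j,h}-\gamma\nabla\cdot\bu_{j,h}$ proves a different statement, and not the one the paper needs. In the proof of Theorem \ref{Eddy-viscosity-convergence}, the term $\frac{1}{2\gamma}\|p^{n+1}_{j,h}\|^2$ in \eqref{NSE-error-6-n} contains the coupled pressure itself. The $C/\gamma$ obtained after \eqref{befor-inital} requires $\Delta t\sum_n\|p^n_{j,h}\|^2\le C$ uniformly in $\gamma$; an $O(\gamma)$ bound would destroy the $1/\gamma$ rate.

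The missing ingredient is a $\gamma$-uniform bound on $\gamma\|\nabla\cdot\bu^{n+1}_{j,h}\|$. Under the assumption $\nabla\cdot\bX_h\subset Q_h$ used in Section 4 this is immediate: taking $q_h=\nabla\cdot\bu^{n+1}_{j,h}$ in \eqref{couple-incompressibility-new} gives $\nabla\cdot\bu^{n+1}_{j,h}=0$, so the term vanishes. For genuine Taylor--Hood, as the lemma is stated, it requires a finer analysis linking grad-div stabilized Taylor--Hood solutions to divergence-free ones.

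\paragraph{Pressure: the discrete time derivative.} There is a second, independent gap. Suppose you bound the error part of $\frac{1}{2\Delta t}(3\bu^{n+1}_{j,h}-4\bu^n_{j,h}+\bu^{n-1}_{j,h})$ by $C\Delta t^{-1}\max_n\|\bu_j(t_n)-\bu^n_{j,h}\|$ and use \eqref{error-eqn-coupled}. This adds $C(h^{2k}\Delta t^{-2}+\Delta t^2)$ to $\Delta t\sum_n\|p^n_{j,h}\|^2$, which is unbounded at the lower end of the admissible range, since $\Delta t\sim h^{2k-1}\ll h^k$ for $k\ge 2$. An inverse estimate goes the wrong way for the $H^{-1}$ norm, and \eqref{error-eqn-coupled} gives no control of the discrete time derivative of the error. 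You need either a separate estimate of $\Delta t\sum_n\|\frac{1}{2\Delta t}(3\bu^{n+1}_{j,h}-4\bu^n_{j,h}+\bu^{n-1}_{j,h})\|^2$ (for example by testing the scheme with this difference quotient), or a stronger lower restriction $\Delta t\gtrsim h^k$.

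For comparison, the paper disposes of the pressure in one sentence (``using the discrete inf-sup condition yields the pressure bound'') and addresses neither point. Your more detailed Step 3 therefore exposes a weakness shared with the paper rather than departing from it, but as written it does not establish the stated $\gamma$-independent pressure bound.
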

	\begin{proof}
		The proof is given in Appendix \ref{appendix-C}.
	\end{proof}
	
	\section{Efficient and Second-order-Accurate BDF-2-EEV-SPP Scheme} \label{proj-section}

	In this section, we propose, analyze, and test a more efficient and robust second-order penalty-projection-based BDF-2-EEV-SPP algorithm for parameterized NSE flow problems. To this end, we define an additional space $\bY_h\subset \bY$. The fully discrete, grad-div-regularized, two-step  BDF-2-EEV-SPP algorithm for the solution of \eqref{gov1}--\eqref{gov4} is given in Algorithm \ref{NSE-FEM}.
	
	\begin{algorithm}[H]\label{NSE-FEM}
		\caption{Efficient second-order BDF-2-EEV-SPP scheme for \eqref{gov1}-\eqref{gov4}}
		Input: $\Delta t>0, T>0$,  $\bif_{j}$ $\in$ $L^\infty$ $(0, T; \bH^{-1}(\cD))$, initial conditions $\bu^0_j$, $\bu^1_j$ $\in\bL^2(\cD)$ with no slip boundary conditions, and $\bhu_{j,h}^0=$  $\bu^0_{j,h}$, $\bhu_{j,h}^1=\bu^1_{j,h}$  for all  $j=1,2,\cdots,J$. Set $M=T/\Delta t$. For $n=1,2,\cdots,M-1$, compute:\\
		Step 1: Find ${\bhu}^{n+1}_{j,h} \in \bX_h$ satisfying, for all 
		$\bchi_h \in \bX_h$,
		\begin{align}
			&\frac{1}{2\Delta t} \left({3}\bhu^{n+1}_{j,h} - 4\btu^n_{j,h} + \btu^{n-1}_{j,h}, \bchi_h\right) + b^* \left(<{\bhu}_h>^n, \bhu^{n+1}_{j,h}, \bchi_h\right) + \left(\bar{\nu} \nabla \bhu^{n+1}_{j,h}, \nabla \bchi_h\right) \nonumber \\ &+ \gamma \left(\nabla \cdot \bhu^{n+1}_{j,h}, \nabla \cdot\bchi_h\right) + \left(\nu_T(\hat{u}_{h}^{'},t_n)\nabla \bhu_{j,h}^{n+1},\nabla\bchi_h\right) = \left(\bif_{j}(t_{n+1}), \bchi_h\right) \nonumber \\ &- b^* \left(\bhu^{'n}_{j,h} , 2\bhu^{n}_{j,h} - \bhu^{n-1}_{j,h} , \bchi_h\right) - \left(\nu^{'}_j \nabla (2\bhu^n_{j,h} - \bhu^{n-1}_{j,h}) , \nabla \bchi_h\right). \label{NSE-step-1}
		\end{align}
		Step 2: Find $\left(\btu^{n+1}_{j,h},\hat{p}^{n+1}_{j,h} \right) \in \bY_h \times Q_h$ satisfying, for all $\left(\bv_h, q_h \right) \in \bY_h \times Q_h$,
		\begin{align}
			\frac{3}{2\Delta t} \left(\btu^{n+1}_{j,h} - \bhu^{n+1}_{j,h}, \bv_h \right) - \left(\hat{p}^{n+1}_{j,h}, \nabla \cdot \bv_h \right) &= 0, \label{NSE-step-2-1} \\
			\left(\nabla \cdot \btu^{n+1}_{j,h}, q_h \right) &= 0. \label{NSE-step-2-2}
		\end{align}
	\end{algorithm}
	In Step 1, we solve a $1 \times 1$ block system of the form
	\begin{align}
		\mathbb{\hat{A}}\left(\hat{\textbf{U}}_1 \big\rvert\hat{\textbf{U}}_2 \big\rvert \cdots \big\rvert\hat{\textbf{U}}_J\right)=\left(\hat{\textbf{F}}_1\big\rvert \hat{\textbf{F}}_2\big\rvert \cdots \big\rvert \hat{\textbf{F}}_J \right),\label{sparse-system-block-pr1}
	\end{align}
	where $\mathbb{\hat{A}}$ is a coefficient matrix independent of the index $j$, and $\hat{\textbf{U}}_j$ is the nodal vector for $\bhu_{j,h}^{n+1}$. The size of $\mathbb{\hat{A}}$ is much smaller than that of the matrix in Algorithm \ref{coupled-alg-com}. 
	
	In Step 2, we solve a $2 \times 2$ block system of the form\begin{align}
		\begin{pmatrix}\mathbb{\tilde{A}} & \mathbb{\tilde{B}}^T\\\mathbb{\tilde{B}} & \mathcal{O}\end{pmatrix}\bigg(\begin{matrix}
			\tilde{\textbf{U}}_1\\
			\hat{\textbf{P}}_1
		\end{matrix}
		\bigg\rvert\begin{matrix}
			\tilde{\textbf{U}}_2\\
			\hat{\textbf{P}}_2
		\end{matrix}\bigg\rvert\begin{matrix}
			\cdots\\
			\cdots
		\end{matrix}\bigg\rvert\begin{matrix}
			\tilde{\textbf{U}}_J\\
			\hat{\textbf{P}}_J
		\end{matrix}\bigg)=\bigg(\begin{matrix}
			\tilde{\textbf{F}}_1\\\textbf{G}_1
		\end{matrix}\bigg\rvert\begin{matrix}
			\tilde{\textbf{F}}_2\\\textbf{G}_2
		\end{matrix}\bigg\rvert\begin{matrix}
			\cdots\\
			\cdots
		\end{matrix}\bigg\rvert\begin{matrix}
			\tilde{\textbf{F}}_J\\\textbf{G}_J
		\end{matrix}\bigg),\label{sparse-system-block-pr2}
	\end{align}
	where $\mathbb{\tilde{A}}$ is the mass matrix, $\mathbb{\tilde{B}}$ is the gradient operator, and $\mathbb{\tilde{B}}^T$ is the adjoint of $\mathbb{\tilde{B}}$. $\tilde{\textbf{U}}_j$ and $\hat{\textbf{P}}_j$ are the nodal vectors associated with $\btu_{j,h}^{n+1}$ and $\hat{p}_{j,h}^{n+1}$, respectively. Since $\mathbb{\tilde{A}}$ is symmetric positive definite, the system is significantly cheaper to solve than the one arising in Algorithm \ref{coupled-alg-com}. Moreover, by taking divergence and the dot product with the normal vector $\bnh$ on both sides of \eqref{prop-2}, and then using the incompressibility constraint and boundary condition, respectively, \eqref{sparse-system-block-pr2} can be written as	\begin{align}
		\mathbb{\hat{S}}\left(\hat{\textbf{P}}_1 \big\rvert\hat{\textbf{P}}_2 \big\rvert \cdots \big\rvert\hat{\textbf{P}}_J\right)=\left(\hat{\textbf{G}}_1\big\rvert \hat{\textbf{G}}_2\big\rvert \cdots \big\rvert \hat{\textbf{G}}_J \right).\label{poisson}
	\end{align}Here, $\mathbb{\hat{S}}$
	denotes the stiffness matrix, while 
	$\hat{\textbf{F}}_j$, $\tilde{\textbf{F}}_j$, and $\hat{\textbf{G}}_j$ for
	$j=1,2,\cdots,J$ represent the corresponding right-hand-side vectors in equations \eqref{sparse-system-block-pr1}, \eqref{sparse-system-block-pr2}, and \eqref{poisson}, respectively. The matrix $\mathbb{\hat{S}}$ is independent of the index $j$ and time; thus, it needs to be constructed only once and can then be reused across all time steps and all realizations. A block Poisson solver can be used to solve \eqref{poisson}. Note that the use of the system in \eqref{poisson} leads to an explicit pressure term in \eqref{sparse-system-block-pr2}.
	
	As a result, in practice, the combined cost of Algorithm \ref{NSE-FEM} will be significantly less than the cost of Algorithm \ref{coupled-alg-com}.  Additionally, as $\gamma$ increases, the splitting error decreases and vanishes asymptotically as $\gamma\to\infty$, thereby allowing the projection method to recover the accuracy of the coupled scheme. 
	
	\begin{remark}
		An appropriately large grad-div stabilization parameter $\gamma$ in the coupled method of Algorithm \ref{coupled-alg-com} reduces the divergence error, and a strategy for selecting $\gamma$ is discussed in \cite{jenkins2014parameter}. In Algorithm \ref{NSE-FEM}, a sufficiently large $\gamma$ reduces both the divergence error and the splitting error. 
	\end{remark}
	\subsection{Stability Analysis}\label{stability-analysis}
	We now prove stability and well-posedness for Algorithm \ref{NSE-FEM}.
	
	\begin{lemma}\label{uniform-boundedness-lemma-proof} For fixed $h$ and $\Delta t>0$, there exists a constant $C_*$, independent of $h$ and $\Delta t$, such that the solution of Algorithm \ref{NSE-FEM} satisfies
		\begin{align}
			\lim_{\gamma\to\infty}\max_{0\le n\le M}\|\bhu_{j,h}^n\|_{L^\infty}\le C_*,\hspace{2mm}\text{for all}\hspace{2mm}j=1,2,\cdots,J.
		\end{align}
	\end{lemma}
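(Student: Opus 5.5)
The plan is to show that, for fixed $h$ and $\Delta t$, the iterates of Algorithm \ref{NSE-FEM} converge as $\gamma\to\infty$ to the iterates of Algorithm \ref{coupled-alg-com}, run with the same $\gamma\to\infty$ limit, for every $n$. We then inherit the uniform $L^\infty$ bound of Lemma \ref{lemma-L3-infty}, whose constant $C_*$ is independent of $h$, $\Delta t$ and $\gamma$. Since $M$ is finite and $h$ is fixed, all spaces are finite dimensional and all norms are equivalent. It therefore suffices to prove convergence in any norm, for instance $\|\nabla\cdot\|$ or $\|\cdot\|$.

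\textbf{Step 1: behaviour of a single step as $\gamma\to\infty$.} Testing \eqref{NSE-step-1} with $\bchi_h=\bhu^{n+1}_{j,h}$ and using skew-symmetry of $b^*$ gives an a priori bound independent of $\gamma$. The bound takes the form
\begin{align*}
\|\bhu^{n+1}_{j,h}\|^2+\Delta t\,\gamma\|\nabla\cdot\bhu^{n+1}_{j,h}\|^2\le C(h,\Delta t,\text{data},\text{previous iterates}),
\end{align*}
with an inverse inequality controlling the explicit terms. Hence $\|\nabla\cdot\bhu^{n+1}_{j,h}\|=O(\gamma^{-1/2})$, and the family is bounded. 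Extracting a subsequence, the limit $\bu^*$ satisfies $\nabla\cdot\bu^*=0$ pointwise in $L^2$, so in particular $\bu^*\in\bV_h$. Restricting \eqref{NSE-step-1} to test functions $\bchi_h\in\bV_h$ removes the grad-div term only in the limit. It is cleaner to use the standard argument of \cite{linke2011convergence,jenkins2014parameter}: the grad-div term acts like a pressure $-\gamma\nabla\cdot\bhu^{n+1}_{j,h}$, and the discrete inf-sup condition \eqref{infsup} bounds its projection onto $Q_h$ uniformly in $\gamma$. The limiting equation is therefore the coupled equation \eqref{couple-eqn-1-new}–\eqref{couple-incompressibility-new}. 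Here $\btu$ replaces $\bu$ in the time-difference term, and the $\gamma$-term is replaced by a Lagrange multiplier.

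\textbf{Step 2: the projection step in the limit.} In Step 2, if $\bhu^{n+1}_{j,h}$ is already (asymptotically) discretely divergence free, then \eqref{NSE-step-2-1}–\eqref{NSE-step-2-2} gives
\begin{align*}
\|\btu^{n+1}_{j,h}-\bhu^{n+1}_{j,h}\|\le C\|\nabla\cdot\bhu^{n+1}_{j,h}\|\to 0 .
\end{align*}
This follows from the stability of the discrete Helmholtz projection in $\bY_h\times Q_h$: test with $\bv_h=\btu^{n+1}_{j,h}-\bhu^{n+1}_{j,h}$, use \eqref{NSE-step-2-2}, and apply the inf-sup bound for $\hp$. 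Consequently $\btu^{n}_{j,h}-\bhu^{n}_{j,h}\to0$. In the limit, the Step 1 equation coincides exactly with Algorithm \ref{coupled-alg-com}, with the same initial data $\bhu^0_{j,h}=\bu^0_{j,h}$ and $\bhu^1_{j,h}=\bu^1_{j,h}$. Uniqueness of the linear coupled step, via coercivity on $\bV_h$, upgrades subsequential convergence to full convergence.

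\textbf{Step 3: induction and conclusion.} We induct on $n$ up to the finite $M$. The nonlinear and EEV coefficients depend continuously on the previous iterates, since they are polynomial in finite-dimensional data. So convergence at levels $n-1$ and $n$ yields convergence of the Step 1 operator and right-hand side, and hence convergence at level $n+1$. This gives $\lim_{\gamma\to\infty}\bhu^{n}_{j,h}=\bu^n_{j,h}$ for every $n\le M$, where $\bu^n_{j,h}$ is the limit coupled solution. We then take $\gamma\to\infty$ in the bound of Lemma \ref{lemma-L3-infty}, which is legitimate because $C_*$ is $\gamma$-independent. Continuity of $\|\cdot\|_{L^\infty}$ on $\bX_h$, together with the finite maximum over $n$, then yields the claim under the step-size regime of Lemma \ref{lemma-L3-infty}.

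\textbf{Main obstacle.} The delicate point is Step 1: identifying the limit of the grad-div term $\gamma(\nabla\cdot\bhu,\nabla\cdot\bchi_h)$ as a genuine pressure in $Q_h$. For Taylor–Hood elements, $\nabla\cdot\bX_h\not\subset Q_h$, so $\gamma\nabla\cdot\bhu$ need not converge inside $Q_h$. I would first try to follow \cite{linke2011convergence}. That means working with the limit space $\{\bv_h\in\bX_h:\nabla\cdot\bv_h=0\}$, the pointwise divergence-free subspace, and showing that the limit solves the coupled scheme posed on that subspace. This is also the limit of Algorithm \ref{coupled-alg-com} as $\gamma\to\infty$, so the comparison with Lemma \ref{lemma-L3-infty} still goes through, provided that subspace has adequate approximation properties on the mesh.
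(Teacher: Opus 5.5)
Your argument is correct in its final form, meaning the version in your ``Main obstacle'' paragraph, but it takes a different route from the paper.

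\textbf{The paper's route.} It works at finite $\gamma$ by strong induction on $n$. Assuming $\|\bhu^n_{j,h}\|_{L^\infty}$ is bounded for $n\le L$, it reruns Step~1 of the proof of Theorem~\ref{Eddy-viscosity-convergence}; the inductive hypothesis is what controls the EEV difference term \eqref{prandtl-diff}. This gives $\|\bhu^{L+1}_{j,h}-\bu^{L+1}_{j,h}\|^2\le C\gamma^{-1}e^{(\cdots)}$ against the coupled solution at the \emph{same} $\gamma$. An inverse inequality and the $\gamma$-independent constant $K_*$ of Lemma~\ref{lemma-L3-infty} then give $\|\bhu^{L+1}_{j,h}\|_{L^\infty}\le K_*+Ch^{-3/2}\gamma^{-1/2}e^{(\cdots)}$, and only then is $\gamma\to\infty$ taken.

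\textbf{Your route.} You use a qualitative singular-perturbation argument in finite dimensions. Both Algorithm~\ref{NSE-FEM} and Algorithm~\ref{coupled-alg-com} converge, step by step, to one common limiting recursion posed on $\{\bv_h\in\bX_h:\nabla\cdot\bv_h=0\}$. The $\gamma$-uniform bound of Lemma~\ref{lemma-L3-infty} then passes to that common limit by continuity.

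\textbf{What each buys.} Your route avoids the error-analysis hypotheses. It needs no condition on $\mu$ or $\zeta$. It also avoids the inclusion $\nabla\cdot\bX_h\subset Q_h$, which the paper's Step~1 uses to write $\gamma\|\nabla\cdot\be^{n+1}_j\|^2=\gamma\|\nabla\cdot\be^{n+1}_{j,\bR}\|^2$ and which sits uneasily with the Taylor--Hood setting of Lemma~\ref{lemma-L3-infty}. Your induction is on convergence of iterates, so it closes without readjusting constants, and it shows that the limit in the statement exists. In exchange, the paper's route gives an explicit rate in $\gamma$. That yields an explicit ``sufficiently large $\gamma$'' for the finite-$\gamma$ use of this lemma inside the proof of Theorem~\ref{Eddy-viscosity-convergence}; your route gives only an unquantified threshold.

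\textbf{What to clean up in Step~1.} The inf-sup/Lagrange-multiplier identification you sketch there does not work for Taylor--Hood. In general $\gamma\nabla\cdot\bhu^{n+1}_{j,h}\notin Q_h$, and \eqref{infsup} controls only its $Q_h$-projection. So the limit is not Algorithm~\ref{coupled-alg-com} with a multiplier in $Q_h$.

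Your fallback is the right argument, and it is elementary. For $\bchi_h$ with $\nabla\cdot\bchi_h=0$, the grad-div term vanishes for \emph{every} $\gamma$. The $\gamma$-uniform bound, a convergent subsequence, passage to the limit against such test functions, and coercivity on that subspace then identify the limit uniquely, for both schemes. The only place you need the inf-sup condition is your Step~2 bound $\|\btu^{n+1}_{j,h}-\bhu^{n+1}_{j,h}\|\le (C_P/\beta)\|\nabla\cdot\bhu^{n+1}_{j,h}\|$, which is correct.

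Your proviso about approximation properties of the pointwise divergence-free subspace is unnecessary here, because you only use that the two limits coincide. That proviso bears on whether Lemma~\ref{lemma-L3-infty} really holds uniformly in $\gamma$, which both you and the paper take as given. Finally, Lemma~\ref{lemma-L3-infty} covers only $1\le n\le M$, so $n=0$ needs the separate base-case argument.
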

	
	\begin{proof}
		See Appendix \ref{appendix}.
	\end{proof}
	
	Define $$D_{\infty}:=\max_{\substack{1\le j\le J \\ 1\le n\le M}}\|\nabla \cdot \bhu^{'n}_{j,h}\|_{L^\infty}, \text{ and }\zeta:=\frac{\alpha_{\min} }{2}-\frac{C}{\mu \Delta t} - \frac{C}{\alpha_{\min}} D_{\infty}^2.$$
	Note that $\zeta> 0$ implies \begin{align*}
		\mu&>\frac{C\alpha_{\min}}{\Delta t\left(\alpha_{\min}^2-CD_{\infty}^2\right)}.
	\end{align*}
	
	\begin{theorem}\label{stability-penalty-theorem}
		Assume $\bif_{\hspace{0.4mm}{j}} \in L^2 \left(0,T,\bH^{-1}(\cD)\right), \bhu^0_{j,h}, \bhu^1_{j,h} \in \bH^1(\cD)$, $\zeta> 0$, $\alpha_{\min}>CD_{\infty}$. Choose $$\mu>\frac{C\alpha_{\min}}{\Delta t\left(\alpha_{\min}^2-CD_{\infty}^2\right)}.$$ Then, for all $\Delta t>0$, the solutions of Algorithm \ref{NSE-FEM} are stable,
		\begin{align}
			& \|\bhu^{M}_{j,h}\|^2+ \zeta\Delta t\sum_{n=1}^{M}\|\nabla\bhu_{j,h}^{n}\|^2+ \Delta t \gamma \sum^{M}_{n=2} \| \nabla \cdot \bhu^{n}_{j,h} \|^2 \nonumber\\&\leq   6\|\bhu^{1}_{j,h}\|^2 + 2\| \bhu^{0}_{j,h} \|^2 + 2 \bar{\nu}_{\min} \Delta t\left( \| \nabla \bhu^{0}_{j,h}\|^2+\| \nabla \bhu^{1}_{j,h}\|^2\right)+\frac{4 \Delta t}{\alpha_{\min}} \sum^{M}_{n=2} \| \bif_{\hspace{0.4mm}{j}}(t_{n}) \|^2_{-1}. \label{stability-penalty}
		\end{align}
	\end{theorem}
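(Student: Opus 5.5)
We test Step 1, \eqref{NSE-step-1}, with $\bchi_h=\bhu^{n+1}_{j,h}$ and Step 2, \eqref{NSE-step-2-1}, with $\bv_h=\btu^{n+1}_{j,h}$. The goal is a BDF-2 energy identity in terms of the projected velocities. From \eqref{NSE-step-2-1}--\eqref{NSE-step-2-2} with $\bv_h=\btu^{n+1}_{j,h}$, the pressure term drops and we obtain
\[
\|\btu^{n+1}_{j,h}\|^2+\|\btu^{n+1}_{j,h}-\bhu^{n+1}_{j,h}\|^2=\|\bhu^{n+1}_{j,h}\|^2 .
\]
In particular $\|\btu^{n+1}_{j,h}\|\le\|\bhu^{n+1}_{j,h}\|$. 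Also, for discretely divergence-free $\btu$ the identity $(\btu^{n+1}_{j,h}-\bhu^{n+1}_{j,h},\btu^{k}_{j,h})=0$ holds. This lets us replace the time-difference term $(3\bhu^{n+1}-4\btu^n+\btu^{n-1},\bhu^{n+1})$ by $(3\btu^{n+1}-4\btu^n+\btu^{n-1},\btu^{n+1})+3\|\bhu^{n+1}-\btu^{n+1}\|^2$, up to a cross term $(4\btu^n-\btu^{n-1},\bhu^{n+1}-\btu^{n+1})$. That cross term vanishes if we take $q_h=$ the pressure-divergence structure, i.e.\ if $\bhu-\btu$ is a discrete gradient orthogonal to $\bY_h$-divergence-free functions. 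The standard polarization identity
\[
(3a-4b+c,a)=\tfrac12\big(\|a\|^2+\|2a-b\|^2-\|b\|^2-\|2b-c\|^2+\|a-2b+c\|^2\big)
\]
then gives a telescoping $G$-norm. Since $\btu^n$ and $\bhu^n$ differ, I would then bound $\|\btu\|\le\|\bhu\|$ to land on the stated left side in terms of $\bhu^M_{j,h}$. The constants $6$ and $2$ on the right come from bounding $\|2\btu^1-\btu^0\|^2\le 8\|\bhu^1\|^2+2\|\bhu^0\|^2$, roughly.

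Next come the spatial terms. Skew-symmetry kills $b^*(<\bhu_h>^n,\bhu^{n+1},\bhu^{n+1})$, and the viscous term gives $\bar\nu_{\min}\|\nabla\bhu^{n+1}\|^2$. The grad-div term gives $\gamma\|\nabla\cdot\bhu^{n+1}\|^2$, and the EEV term gives $\mu\Delta t\|\mathcal L^n\nabla\bhu^{n+1}\|^2$. The forcing is handled by Young's inequality, $\le\frac{\alpha_{\min}}4\|\nabla\bhu^{n+1}\|^2+\frac1{\alpha_{\min}}\|\bif_j\|_{-1}^2$. For the viscosity fluctuation term, use $|(\nu_j'\nabla(2\bhu^n-\bhu^{n-1}),\nabla\bhu^{n+1})|\le\|\nu_j'\|_\infty(\frac32\|\nabla\bhu^{n+1}\|^2+\|\nabla\bhu^n\|^2+\frac12\|\nabla\bhu^{n-1}\|^2)$. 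This totals $3\|\nu_j'\|_\infty$ per level after shifting indices, which is exactly why $\alpha_j=\bar\nu_{\min}-3\|\nu'_j\|_\infty$ appears.

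The main obstacle is the fluctuation convection term $b^*(\bhu^{'n}_{j,h},2\bhu^n-\bhu^{n-1},\bhu^{n+1})$. I would use \eqref{trilinear-identitiy} to write it as
\[
(\bhu^{'n}\cdot\nabla(2\bhu^n-\bhu^{n-1}),\bhu^{n+1})+\tfrac12(\nabla\cdot\bhu^{'n},(2\bhu^n-\bhu^{n-1})\cdot\bhu^{n+1}).
\]
For the first piece, I would integrate by parts onto $\bhu^{n+1}$, or write $2\bhu^n-\bhu^{n-1}=\bhu^{'n}+<\bhu_h>^n$, and bound it by $\||\bhu^{'n}|\nabla\bhu^{n+1}\|\,\|2\bhu^n-\bhu^{n-1}\|$. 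Since $|\bhu^{'n}_{j,h}|\le\mathcal L^n$ and by \eqref{basic-ineq}, Young's inequality gives $\frac{\mu\Delta t}{2}\|\mathcal L^n\nabla\bhu^{n+1}\|^2+\frac{C}{\mu\Delta t}\|2\bhu^n-\bhu^{n-1}\|^2$. The last quantity is bounded by $\frac{C}{\mu\Delta t}(\|\nabla\bhu^n\|^2+\|\nabla\bhu^{n-1}\|^2)$ via Poincar\'e, which produces the $\frac{C}{\mu\Delta t}$ in $\zeta$. The divergence piece is bounded by $D_\infty\|2\bhu^n-\bhu^{n-1}\|\|\bhu^{n+1}\|\le\frac{\alpha_{\min}}{8}\|\nabla\bhu^{n+1}\|^2+\frac{C}{\alpha_{\min}}D_\infty^2\|\nabla(2\bhu^n-\bhu^{n-1})\|^2$, which produces the $D_\infty^2/\alpha_{\min}$ term. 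Summing from $n=1$ to $M-1$, shifting indices, and absorbing then leaves a dissipation coefficient $\ge\zeta$ under the hypotheses, with the $n=0,1$ gradient terms moved to the right-hand side. This yields \eqref{stability-penalty} without Gr\"onwall and hence for all $\Delta t$. The delicate point is the orthogonality cross term between $\bhu-\btu$ and $4\btu^n-\btu^{n-1}$. If that term does not vanish exactly, I would instead keep the $3\|\bhu^{n+1}-\btu^{n+1}\|^2$ dissipation to absorb it.
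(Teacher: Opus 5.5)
Your route differs from the paper's and can be completed, but two steps are wrong as written.

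First, the cross term $(4\btu^n_{j,h}-\btu^{n-1}_{j,h},\bhu^{n+1}_{j,h}-\btu^{n+1}_{j,h})$ vanishes unconditionally. Test \eqref{NSE-step-2-1} with $\bv_h=4\btu^n_{j,h}-\btu^{n-1}_{j,h}\in\bY_h$; the pressure term drops by \eqref{NSE-step-2-2}. This is exactly your own identity $(\btu^{n+1}_{j,h}-\bhu^{n+1}_{j,h},\btu^k_{j,h})=0$ for $k=n,n-1$. At $n=1$ this needs $\btu^0_{j,h},\btu^1_{j,h}$ to be discretely divergence-free, e.g.\ equal to $\bu^0_{j,h},\bu^1_{j,h}\in\bV_h$. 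Your fallback would not have worked anyway. Absorbing the cross term into $\frac{3}{2\Delta t}\|\bhu^{n+1}_{j,h}-\btu^{n+1}_{j,h}\|^2$ by Young leaves $\frac{C}{\Delta t}\|4\btu^n_{j,h}-\btu^{n-1}_{j,h}\|^2$. After multiplying by $4\Delta t$, this term no longer carries a factor $\Delta t$, so Gr\"onwall would give a factor like $e^{CT/\Delta t}$, which is not uniform in $\Delta t$.

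Second, you cannot reach $\|\bhu^M_{j,h}\|^2$ on the left via $\|\btu^M_{j,h}\|\le\|\bhu^M_{j,h}\|$. That inequality bounds the left side from above, not below; it is only useful for the initial terms on the right. Instead, keep the splitting dissipation. After multiplying by $4\Delta t$ and summing, the left side contains
\[
\|\btu^M_{j,h}\|^2+6\|\bhu^M_{j,h}-\btu^M_{j,h}\|^2\ \ge\ \|\btu^M_{j,h}\|^2+\|\bhu^M_{j,h}-\btu^M_{j,h}\|^2=\|\bhu^M_{j,h}\|^2,
\]
by the Pythagorean identity you derived at the start.

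With these repairs the argument is valid, and it differs from the paper's in how the projection step enters.

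\textbf{The paper's route.} It applies \eqref{BDF-2-identity} directly to the mixed triple $(\bhu^{n+1}_{j,h},\btu^n_{j,h},\btu^{n-1}_{j,h})$. It uses Step 2 only through two contraction inequalities: $\|\btu^n_{j,h}\|\le\|\bhu^n_{j,h}\|$, and $\|2\btu^n_{j,h}-\btu^{n-1}_{j,h}\|\le\|2\bhu^n_{j,h}-\btu^{n-1}_{j,h}\|$ (from $\bv_h=2\btu^{n}_{j,h}-\btu^{n-1}_{j,h}$). The mixed quantity $\|\bhu^{n+1}_{j,h}\|^2+\|2\bhu^{n+1}_{j,h}-\btu^n_{j,h}\|^2$ then telescopes, and $\|\bhu^M_{j,h}\|^2$ appears directly on the left.

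\textbf{Your route.} You use the full $L^2$-orthogonality of Step 2 to get an exact identity in the pure $\btu$ G-norm, plus the numerical dissipation $\frac{3}{2\Delta t}\|\bhu^{n+1}_{j,h}-\btu^{n+1}_{j,h}\|^2$. This costs the extra Pythagorean step above. In return it discards nothing in the time-derivative term and also bounds the accumulated splitting defect $\sum_n\|\bhu^n_{j,h}-\btu^n_{j,h}\|^2$.

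Your treatment of the spatial terms matches the paper's. For the convective fluctuation term, of your two options it is the integration by parts (equivalently the skew-symmetric swap) that is actually needed. The reason is that the EEV term only controls $\|\mathcal{L}^n\nabla\bhu^{n+1}_{j,h}\|$.
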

	\begin{proof}
		The proof follows by letting $ \bchi_h= \bhu^{n+1}_{j,h}$ in \eqref{NSE-step-1},
		\begin{align}
			&\frac{1}{2\Delta t} \left({3}\bhu^{n+1}_{j,h} - 4\btu^n_{j,h} + \btu^{n-1}_{j,h}, \bhu^{n+1}_{j,h}\right) + b^* \left(<{\bhu}_h>^n, \bhu^{n+1}_{j,h}, \bhu^{n+1}_{j,h}\right) + \left(\bar{\nu}\nabla \bhu^{n+1}_{j,h}, \nabla \bhu^{n+1}_{j,h}\right) \notag \\ &+ \gamma \left(\nabla \cdot \bhu^{n+1}_{j,h}, \nabla \cdot\bhu^{n+1}_{j,h}\right) + \mu \Delta t \left((\mathcal{L}^n(\hat{u}^{'}_{h}))^2 \nabla\bhu^{n+1}_{j,h}, \nabla\bhu^{n+1}_{j,h} \right) = \left(\bif_{j}(t_{n+1}), \bhu^{n+1}_{j,h}\right) \notag \\ &- b^* \left(\bhu^{'n}_{j,h} , 2\bhu^{n}_{j,h} - \bhu^{n-1}_{j,h} , \bhu^{n+1}_{j,h}\right) - \left(\nu^{'}_j \nabla (2\bhu^n_{j,h} - \bhu^{n-1}_{j,h}) , \nabla \bhu^{n+1}_{j,h}\right). \notag
		\end{align}
		Using the Cauchy-Schwarz and Young's inequalities, as well as the following algebraic identity
		\begin{align}
			\frac{1}{2}(3a - 4b + c)a = \frac{1}{4}\left[a^2 + \left(2a - b\right)^2\right] - \frac{1}{4}\left[b^2 + \left(2b - c\right)^2\right] + \frac{1}{4}\left(a - 2b + c\right)^2 , \label{BDF-2-identity}
		\end{align}
		we obtain
		\begin{align}
			&\frac{1}{4 \Delta t} \left( \|\bhu^{n+1}_{j,h}\|^2 + \|2\bhu^{n+1}_{j,h} - \btu^{n}_{j,h}\|^2 - \|\btu^{n}_{j,h}\|^2 - \|2\btu^{n}_{j,h} - \btu^{n-1}_{j,h}\|^2 + \|\bhu^{n+1}_{j,h} - 2\btu^{n}_{j,h} + \btu^{n-1}_{j,h}\|^2\right) \notag \\ &+ \|\bar{\nu}^{\frac{1}{2}} \nabla \bhu^{n+1}_{j,h}\|^2 + \gamma \| \nabla \cdot \bhu^{n+1}_{j,h} \|^2 + \mu \Delta t \| \mathcal{L}^n(\hat{u}^{'}_{h}) \nabla \bhu^{n+1}_{j,h}\|^2 = \left(\bif_{j}(t_{n+1}), \bhu^{n+1}_{j,h} \right) \notag \\ &- b^* \left( \bhu'^{n}_{j,h} , 2\bhu^n_{j,h} - \bhu^{n-1}_{j,h} , \bhu^{n+1}_{j,h} \right) - \left(\nu^{'}_j \nabla (2\bhu^n_{j,h} - \bhu^{n-1}_{j,h}) , \nabla \bhu^{n+1}_{j,h}\right). \label{NSE-eq-1}
		\end{align}
		Using the Cauchy-Schwarz, Young's, and H\"older's inequalities, we obtain
		\begin{align}
			\left(\bif_{j}(t_{n+1}), \bhu^{n+1}_{j,h} \right) &\leq \| \bif_{j}(t_{n+1}) \|_{-1} \| \nabla \bhu^{n+1}_{j,h} \|
			\leq \frac{\alpha_j}{4} \| \nabla \bhu^{n+1}_{j,h} \|^2 + \frac{1}{\alpha_j} \| \bif_{j}(t_{n+1}) \|^2_{-1}, \notag \\
			\left(\nu^{'}_{j} \nabla (2\bhu^n_{j,h} - \bhu^{n-1}_{j,h}) , \nabla \bhu^{n+1}_{j,h}\right)
			&\leq 2 \|\nu^{'}_{j}\|_{\infty} \| \nabla \bhu^n_{j,h}\|  \| \nabla \bhu^{n+1}_{j,h} \| + \|\nu^{'}_{j}\|_{\infty} \| \nabla \bhu^{n-1}_{j,h}\| \| \nabla \bhu^{n+1}_{j,h} \| \notag \\
			&\leq \frac{3}{2}\|\nu^{'}_j \|_{\infty} \| \nabla \bhu^{n+1}_{j,h} \|^2 + \|\nu^{'}_j\|_{\infty} \|\nabla \bhu^{n}_{j,h}\|^2 + \frac{1}{2} \|\nu^{'}_j\|_{\infty} \|\nabla \bhu^{n-1}_{j,h}\|^2 . \notag
		\end{align}
		Using identity \eqref{trilinear-identitiy} and the Cauchy-Schwarz, triangle,  H\"older's, and Poincaré inequalities to estimate the trilinear form, we obtain
		\begin{align}
			&- b^* \left( \bhu^{'n}_{j,h} , 2\bhu^n_{j,h} - \bhu^{n-1}_{j,h} , \bhu^{n+1}_{j,h} \right)
			= b^* \left( \bhu^{'n}_{j,h} , \bhu^{n+1}_{j,h}, 2\bhu^n_{j,h} - \bhu^{n-1}_{j,h} \right) \notag \\
			&= \left( \bhu^{'n}_{j,h} \cdot \nabla \bhu^{n+1}_{j,h}, (2\bhu^n_{j,h} - \bhu^{n-1}_{j,h}) \right) + \frac{1}{2} \left((\nabla \cdot \bhu^{'n}_{j,h}) \bhu^{n+1}_{j,h}, 2\bhu^n_{j,h} - \bhu^{n-1}_{j,h} \right) \notag \\
			&\leq \| \bhu^{'n}_{j,h} \cdot \nabla \bhu^{n+1}_{j,h}\| \|2 \bhu^n_{j,h}- \bhu^{n-1}_{j,h} \| + \frac{1}{2}\| \nabla \cdot \bhu^{'n}_{j,h}\|_{L^\infty} \| \bhu^{n+1}_{j,h} \| \|2 \bhu^n_{j,h}- \bhu^{n-1}_{j,h} \| \notag \\
			& \le C\left(2\|\nabla \bhu^{n}_{j,h} \| +  \| \nabla\bhu^{n-1}_{j,h} \|\right)\left(\| \bhu^{'n}_{j,h} \cdot \nabla \bhu^{n+1}_{j,h} \| + \frac12 \| \nabla\cdot \bhu^{'n}_{j,h}\|_{L^\infty} \| \nabla \bhu^{n+1}_{j,h} \|\right). \notag
		\end{align}
		Again, using \eqref{basic-ineq}--\eqref{basic-ineq-infinity} and Young's inequality, we obtain
		\begin{align} 
			&- b^* \left( \bhu^{'n}_{j,h} , 2\bhu^n_{j,h} - \bhu^{n-1}_{j,h} , \bhu^{n+1}_{j,h} \right)\le C\left(\|\nabla \bhu^{n}_{j,h} \| +  \| \nabla\bhu^{n-1}_{j,h} \|\right)\left(\|| \bhu^{'n}_{j,h}| \nabla \bhu^{n+1}_{j,h} \| + \frac{1}{2}\|\nabla \cdot \bhu^{'n}_{j,h}\|_{L^\infty} \| \nabla \bhu^{n+1}_{j,h} \|\right) \nonumber \\&\le \frac{\alpha_j}{4}\| \nabla \bhu^{n+1}_{j,h} \|^{2} + C\left(\|\nabla \bhu^{n}_{j,h} \| +  \| \nabla\bhu^{n-1}_{j,h} \|\right)\|\mathcal{L}^n(\hat{u}^{'}_{h}) \nabla \bhu^{n+1}_{j,h} \| + \frac{C}{\alpha_j}\left(\|\nabla \bhu^{n}_{j,h} \|^{2} +  \| \nabla\bhu^{n-1}_{j,h} \|^{2}\right) \|\nabla \cdot \bhu^{'n}_{j,h}\|^{2}_{L^\infty} \nonumber\\
			& \le \frac{\alpha_j}{4}\| \nabla \bhu^{n+1}_{j,h} \|^{2} + \frac{\mu \Delta t}{2} \|\mathcal{L}^n(\hat{u}^{'}_{h}) \nabla \bhu^{n+1}_{j,h} \|^{2} + \left(\frac{C}{\mu \Delta t} + \frac{C}{\alpha_j} \|\nabla \cdot \bhu^{'n}_{j,h}\|^{2}_{L^\infty}\right) \left(\|\nabla \bhu^{n}_{j,h} \|^{2} +  \| \nabla\bhu^{n-1}_{j,h} \|^{2}\right).
		\end{align}
		Applying the above bounds, omitting the nonnegative term from the left-hand side, and rearranging \eqref{NSE-eq-1}, yields
		\begin{align}
			&\frac{1}{4 \Delta t} \left( \|\bhu^{n+1}_{j,h}\|^2 + \|2\bhu^{n+1}_{j,h} - \btu^{n}_{j,h}\|^2 - \|\btu^{n}_{j,h}\|^2 - \|2\btu^{n}_{j,h} - \btu^{n-1}_{j,h}\|^2 \right) + \frac{\bar{\nu}_{\min}}{2}  \| \nabla \bhu^{n+1}_{j,h}\|^2  + \gamma \| \nabla \cdot \bhu^{n+1}_{j,h} \|^2\notag \\ & + \frac{\mu \Delta t}{2} \| \mathcal{L}^n(\hat{u}^{'}_{h}) \nabla \bhu^{n+1}_{j,h}\|^2 \leq \frac{1}{\alpha_j} \| \bif_{j}(t_{n+1}) \|^2_{-1} + \left(\frac{C}{\mu \Delta t} + \frac{C}{\alpha_j} \|\nabla \cdot \bhu^{'n}_{j,h}\|^{2}_{L^\infty} + \|\nu^{'}_j\|_{\infty} \right) \| \nabla\bhu^n_{j,h}\|^2 \notag \\
			&+ \left( \frac{C}{\mu \Delta t} + \frac{C}{\alpha_j} \|\nabla \cdot \bhu^{'n}_{j,h}\|^{2}_{L^\infty} + \frac{1}{2} \|\nu^{'}_j\|_{\infty} \right) \| \nabla\bhu^{n-1}_{j,h}\|^2. \label{NSE-eq-2}
		\end{align}
		Now, from Step 2 in \eqref{NSE-step-2-1}, adding and subtracting $\btu^n_{j,h}$, gives
		\begin{align}
			\frac{3}{4\Delta t} \left(2\btu^{n+1}_{j,h} - 2\bhu^{n+1}_{j,h} + \btu^n_{j,h} - \btu^n_{j,h}, \bv_h \right) - \left(\hat{p}^{n+1}_{j,h}, \nabla \cdot \bv_h \right) = 0. \nonumber
		\end{align}
		Rearranging and choosing $\bv_h = 2\btu^{n+1}_{j,h} - \btu^{n}_{j,h}$, we write
		\begin{align} 
			\frac{3}{4\Delta t} \left(2\btu^{n+1}_{j,h} - \btu^n_{j,h}  - (2\bhu^{n+1}_{j,h} - \btu^n_{j,h}) , 2\btu^{n+1}_{j,h} - \btu^n_{j,h} \right) = 0. \nonumber
		\end{align}
		Now, using the Cauchy-Schwarz inequality, we have
		\begin{align}
			\|2\btu^{n+1}_{j,h} - \btu^n_{j,h}\|^2 \leq \|2\bhu^{n+1}_{j,h} - \btu^n_{j,h} \| \|2\btu^{n+1}_{j,h} - \btu^n_{j,h}\|, \nonumber
		\end{align}
		which gives
		\begin{align}
			\|2\btu^{n+1}_{j,h} - \btu^n_{j,h}\| \leq \|2\bhu^{n+1}_{j,h} - \btu^n_{j,h} \|. \label{bound-tild}
		\end{align}
		Similarly, substituting $\bv_h= \btu^{n+1}_{j,h}$ in \eqref{NSE-step-2-1}, we have
		\begin{align}
			\frac{3}{2\Delta t} \left(\btu^{n+1}_{j,h} - \bhu^{n+1}_{j,h}, \btu^{n+1}_{j,h} \right) = 0. \nonumber
		\end{align}
		Using the Cauchy-Schwarz inequality, we obtain
		\begin{align}
			\|\btu^{n+1}_{j,h}\|^2 \leq \| \bhu^{n+1}_{j,h} \| \|\btu^{n+1}_{j,h}\|.\nonumber
		\end{align}
		Therefore
		\begin{align}
			\|\btu^{n+1}_{j,h}\| \leq \| \bhu^{n+1}_{j,h} \|.\label{tild-hat}
		\end{align}
		Substituting the above bound into \eqref{NSE-eq-2}, and rearranging yields
		\begin{align}
			&\frac{1}{4\Delta t} \left( \|\bhu^{n+1}_{j,h}\|^2- \|\bhu^{n}_{j,h}\|^2 + \|2\bhu^{n+1}_{j,h} - \btu^n_{j,h} \|^2 - \|2\bhu^{n}_{j,h} - \btu^{n-1}_{j,h} \|^2 \right) + \frac{\bar{\nu}_{\min}}{2} \left(\| \nabla \bhu^{n+1}_{j,h}\|^2 - \| \nabla \bhu^{n}_{j,h}\|^2 \right) \notag \\ &+ \gamma \| \nabla \cdot \bhu^{n+1}_{j,h} \|^2 + \left(\frac{\bar{\nu}_{\min}}{2}-\|\nu^{'}_j\|_{\infty} -\frac{C}{\mu \Delta t} - \frac{C}{\alpha_j} \|\nabla \cdot \bhu^{'n}_{j,h}\|^{2}_{L^\infty} \right) \left(\| \nabla \bhu^{n}_{j,h}\|^2 - \| \nabla \bhu^{n-1}_{j,h}\|^2 \right)\nonumber\\& + \left(\frac{\alpha_j}{2} -\frac{C}{\mu \Delta t} - \frac{C}{\alpha_j} \|\nabla \cdot \bhu^{'n}_{j,h}\|^{2}_{L^\infty} \right) \| \nabla \bhu^{n-1}_{j,h} \|^2    + \frac{\mu \Delta t}{2} \|\mathcal{L}^n(\hat{u}^{'}_{h}) \nabla \bhu^{n+1}_{j,h}\|^2 \leq \frac{1}{\alpha_j} \| \bif_{j}(t_{n+1}) \|^2_{-1}. \notag
		\end{align}
		After multiplying both sides by $4 \Delta t$, taking the sum over $n=1,2,\cdots,M-1$, omitting the nonnegative term on the left-hand side, and invoking the triangle inequality, Young's inequality, and \eqref{tild-hat}, we arrive at
		\begin{align}
			& \|\bhu^{M}_{j,h}\|^2  +2 \bar{\nu}_{\min}\Delta t\| \nabla \bhu^{M}_{j,h} \|^2+ \left(2\bar{\nu}_{\min}-4\|\nu^{'}_j\|_{\infty} -\frac{C}{\mu \Delta t} - \frac{C}{\alpha_j} D_{\infty}^2 \right)\Delta t\|\nabla\bhu_{j,h}^{M-1}\|^2\nonumber\\&+4\zeta\Delta t\sum_{n=1}^{M-2}\|\nabla\bhu_{j,h}^{n}\|^2+ 4 \Delta t \gamma \sum^{M}_{n=2} \| \nabla \cdot \bhu^{n}_{j,h} \|^2 \nonumber\\&\leq \frac{4 \Delta t}{\alpha_j} \sum^{M}_{n=2} \| \bif_{\hspace{0.4mm}{j}}(t_{n}) \|^2_{-1}  + 6\|\bhu^{1}_{j,h}\|^2 + 2\| \bhu^{0}_{j,h} \|^2 + 2 \bar{\nu}_{\min} \Delta t\left( \| \nabla \bhu^{0}_{j,h}\|^2+\| \nabla \bhu^{1}_{j,h}\|^2\right). 
		\end{align}
		If $\alpha_{\min}>CD_{\infty}$, choose $\mu>\frac{C\alpha_{\min}}{\Delta t\left(\alpha_{\min}^2-CD_{\infty}^2\right)},$
		and simplify to complete the proof.
	\end{proof}
	
	We now prove that the penalty-projection-based Algorithm \ref{NSE-FEM} converges to coupled Algorithm \ref{coupled-alg-com} as $\gamma\rightarrow\infty$. Thus, we need to define the space $\bR_h:=\bV_h^\perp\subset\bX_h$ as the orthogonal complement of $\bV_h$ with respect to the norm $\bH^1(\cD)$.
	
	For the analysis in this paper, we additionally assume that the finite element spaces satisfy $\nabla\cdot\bX_h\subset Q_h$. Although the Taylor–Hood elements employed in our numerical experiments do not generally satisfy this inclusion, sufficiently large grad-div stabilization strongly penalizes their discrete divergence, and, under appropriate finite-element and mesh assumptions, grad-div-stabilized Taylor–Hood solutions are known to converge to corresponding pointwise divergence-free solutions as $\gamma\to\infty$.
	
	\begin{lemma}\label{CR-lemma}
		Let the finite element pair $(\bX_h,Q_h)\subset(\bX,Q)$ satisfy the \textit{inf-sup condition} \eqref{infsup} and the divergence-free property, i.e., $\nabla\cdot\bX_h\subset Q_h$. Then there exists a constant $C_R$ independent of $h$ such that $$\|\nabla\bv_h\|\le C_R\|\nabla\cdot\bv_h\|,\hspace{3mm}\forall\bv_h\in \bR_h.$$
	\end{lemma}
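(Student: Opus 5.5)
The plan is to use the inf-sup condition \eqref{infsup} with the pressure test function $q_h:=\nabla\cdot\bv_h$, which is admissible because $\nabla\cdot\bX_h\subset Q_h$. We then show that the velocity attaining (up to the constant $\beta$) the supremum can be taken in $\bR_h$. The key structural point is that $\bR_h$ is the $\bH^1$-orthogonal complement of the kernel $\bV_h$ of the discrete divergence. The restricted operator $\nabla\cdot:\bR_h\to\nabla\cdot\bX_h$ is therefore injective. Indeed, if $\bv_h\in\bR_h$ and $\nabla\cdot\bv_h=0$, then $(\nabla\cdot\bv_h,q_h)=0$ for all $q_h\in Q_h$, so $\bv_h\in\bV_h\cap\bV_h^\perp=\{0\}$.

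First, fix $\bv_h\in\bR_h$ and set $q_h=\nabla\cdot\bv_h\in Q_h$. Note that $q_h$ has zero mean because $\bv_h$ vanishes on $\partial\cD$, so $q_h\in Q_h\subset Q$. By \eqref{infsup} there is $\bw_h\in\bX_h$ with
\begin{align*}
(q_h,\nabla\cdot\bw_h)\ge \beta\|q_h\|\,\|\nabla\bw_h\|, \qquad \bw_h\neq 0.
\end{align*}
Second, decompose $\bw_h=\bw_h^0+\bw_h^R$ with $\bw_h^0\in\bV_h$ and $\bw_h^R\in\bR_h$. Since $(q_h,\nabla\cdot\bw_h^0)=0$, we get $(q_h,\nabla\cdot\bw_h)=(q_h,\nabla\cdot\bw_h^R)$. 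Orthogonality in $\bH^1$ together with Poincar\'e gives $\|\nabla\bw_h^R\|\le C\|\bw_h^R\|_1\le C\|\bw_h\|_1\le C\|\nabla\bw_h\|$. Hence the inf-sup condition holds on $\bR_h$ with constant $\beta/C$.

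Third, we identify the maximizer. Let $\bz_h\in\bR_h$ be the unique element with $(\bz_h,\bchi_h)_{\bH^1}=(q_h,\nabla\cdot\bchi_h)$ for all $\bchi_h\in\bR_h$; it exists by Riesz on the finite-dimensional space $\bR_h$. Then
\begin{align*}
\sup_{\bchi_h\in\bR_h}\frac{(q_h,\nabla\cdot\bchi_h)}{\|\bchi_h\|_1}=\|\bz_h\|_1\ge \frac{\beta}{C}\|q_h\|.
\end{align*}
The same identity with $\bchi_h=\bz_h$ yields $\|\bz_h\|_1^2=(q_h,\nabla\cdot\bz_h)\le\|q_h\|\,\|\nabla\cdot\bz_h\|$.

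Rather than working with $\bz_h$, a cleaner route is to apply the inf-sup bound directly to $\bv_h$ itself. This is the main obstacle: \eqref{infsup} controls $\|q_h\|$ by a supremum over test velocities, not $\|\nabla\bv_h\|$ by $\|\nabla\cdot\bv_h\|$, so we need the inverse of the divergence on $\bR_h$. The standard fact I would invoke is that, by the closed-range/Babu\v{s}ka--Brezzi theory, the inf-sup condition is equivalent to the divergence $B:\bR_h\to Q_h$ (or onto its range) having a bounded inverse with norm at most $1/\beta$, uniformly in $h$. Concretely, $B^T B$ restricted to $\bR_h$ has smallest eigenvalue at least $\beta^2$ in the $\bH^1$ metric, because $\|B^T q\|_{\bR_h'}\ge\beta\|q\|$ and $B:\bR_h\to\operatorname{range}B$ is bijective. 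Hence $\|\bv_h\|_1\le \beta^{-1}\|\nabla\cdot\bv_h\|$ for $\bv_h\in\bR_h$. Combining with $\|\nabla\bv_h\|\le\|\bv_h\|_1$ gives the claim with $C_R$ depending only on $\beta$ (and the Poincar\'e constant, if seminorms are used), hence independent of $h$.

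I will spell out this duality step carefully. The inclusion $\nabla\cdot\bX_h\subset Q_h$ is used exactly to ensure that $\operatorname{range}B\subset Q_h$, so that the test pressure in \eqref{infsup} may be taken equal to $\nabla\cdot\bv_h$.
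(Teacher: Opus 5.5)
Your argument is correct and is essentially the standard one behind the paper's citation of Girault--Raviart and Linke et al.: the inf-sup condition makes the discrete divergence an isomorphism from $\bV_h^\perp=\bR_h$ onto $Q_h$ whose inverse is bounded by the reciprocal of the inf-sup constant, and $\nabla\cdot\bX_h\subset Q_h$ identifies the dual norm $\sup_{q_h}(\nabla\cdot\bv_h,q_h)/\|q_h\|$ with $\|\nabla\cdot\bv_h\|$. The remaining points are cosmetic: after your second step the final constant is $C/\beta$ (with $C$ from the Poincar\'e norm equivalence) rather than $\beta^{-1}$, and the Riesz-representative detour in your third step is never used.
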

	\begin{proof}
		See \cite{GR86, linke2017connection}.
	\end{proof}

	We now combine \eqref{NSE-step-1} and \eqref{NSE-step-2-1} from Step 1 and Step 2 of Algorithm \ref{NSE-FEM}. Since $\bX_h\subset\bY_h$, we choose $\bv_h=\bchi_h$ in \eqref{NSE-step-2-1}
	and substitute $\btu^{n}_{j,h}$ and $\btu^{n-1}_{j,h}$ into \eqref{NSE-step-1} to obtain
	\begin{align}
		&\frac{1}{2\Delta t} \left({3}\bhu^{n+1}_{j,h} - 4\bhu^n_{j,h} + \bhu^{n-1}_{j,h}, \bchi_{h}\right) + b^* \left(<{\bhu}_h>^n, \bhu^{n+1}_{j,h}, \bchi_{h}\right) + \left(\bar{\nu}\nabla \bhu^{n+1}_{j,h}, \nabla \bchi_{h}\right) \notag \\ &+ \gamma \left(\nabla \cdot \bhu^{n+1}_{j,h}, \nabla \cdot\bchi_{h}\right) -\frac{1}{3} \left(4\hat{p}^{n}_{j,h}- \hat{p}^{n-1}_{j,h}, \nabla \cdot \bchi_h\right) + \mu \Delta t \left((\mathcal{L}^n(\hat{u}^{'}_{h}))^2 \nabla\bhu^{n+1}_{j,h}, \nabla\bchi_{h} \right) \notag \\ &= \left(\bif_{j}(t_{n+1}), \bchi_{h}\right) - b^* \left(\bhu^{'n}_{j,h} , 2\bhu^{n}_{j,h} - \bhu^{n-1}_{j,h} , \bchi_{h}\right) - \left(\nu^{'}_j \nabla (2\bhu^n_{j,h} - \bhu^{n-1}_{j,h}) , \nabla \bchi_{h}\right). \label{NSE-error-1}
	\end{align}
	\begin{theorem} (Convergence)\label{Eddy-viscosity-convergence} Let $(\bu_{j,h}^{n+1}
		,p_{j,h}^{n+1})$ and $(\bhu_{j,h}^{n+1},\tilde{\bu}_{j,h}^{n+1}
		,\hp_{j,h}^{n+1})$ be the solutions of Algorithms \ref{coupled-alg-com} and \ref{NSE-FEM}, respectively, for $n=1,2,\cdots,M-1$. For a given $\gamma>0$, and $\Delta t>0$, if we choose $$\mu>\frac{C\alpha_{\min}}{\Delta t\left(\alpha_{\min}^2-CD_{\infty}^2\right)},$$ then
		\begin{align}
			&\left(\Delta t\sum_{n=2}^M\|\nabla\hspace{-1mm}\lab\bu_h\rab^n-\nabla\hspace{-1mm}\lab\bhu_h\rab^n\|^2\right)^{\frac12}\leq \frac{CC_{R}}{\alpha_{\min}^{\frac12}\gamma} \exp\left\{\frac{C}{\alpha_{\min}} \left(\frac{\Delta t}{h^3\alpha_{\min}}+1\right)\right\}\nonumber\\&\times\Bigg[1+\exp\left\{ \frac{C}{\alpha_{\min}} \left(\frac{\alpha_{\min}}{\Delta t}+\alpha_{\min}+D_{\infty}^2\right)\right\}  \left(\frac{1}{\Delta t\alpha_{\min}\zeta}+\frac{1}{\Delta t}+\frac{1}{\Delta t^2}+\Delta t\right)\Bigg]^{\frac12},\label{gamma-theorem}
		\end{align}
		and
		\begin{align}
			&\left\{\Delta t\sum_{j=1}^{J}\sum_{n=1}^{M-1}\Big\|p^{n+1}_{j,h} -\left( \frac{4}{3}\hat{p}^n_{j,h} - \frac{1}{3}\hat{p}^{n-1}_{j,h}-\gamma\nabla \cdot \bhu^{n+1}_{j,h}\right)\Big\|^2\right\}^{\frac12}\le \frac{CC_{R}}{\alpha_{\min}^{\frac12}\gamma} \exp\left\{\frac{C}{\alpha_{\min}} \left(\frac{\Delta t}{h^3\alpha_{\min}}+1\right)\right\}\nonumber\\&\times\Bigg[1+\exp\left\{ \frac{C}{\alpha_{\min}} \left(\frac{\alpha_{\min}}{\Delta t}+\alpha_{\min}+D_{\infty}^2\right)\right\}  \left(\frac{1}{\Delta t\alpha_{\min}\zeta}+\frac{1}{\Delta t}+\frac{1}{\Delta t^2}+\Delta t\right)\Bigg]^{\frac12}\nonumber\\ &\times\left(\frac{1}{\zeta\Delta t}+1+\max_{1\le j\le J}\|\nu_{j}^{'}\|_{\infty}^2+\frac{1}{\Delta t^2}+\frac{\Delta t}{\alpha_{\min}h^3}+\Delta t^2+\frac{1}{\alpha_{\min}\zeta^{\frac12}\Delta t^{\frac12}}+\|\bar{\nu}\|_{\infty}^2\right)^{\frac12}.
		\end{align}
		
	\end{theorem}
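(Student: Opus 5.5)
The proof compares the combined penalty-projection equation \eqref{NSE-error-1} with the coupled scheme \eqref{couple-eqn-1-new}, in the spirit of \cite{linke2017connection}. Set $\be_j^{n}:=\bu_{j,h}^{n}-\bhu_{j,h}^{n}$. Since $\nabla\cdot\bX_h\subset Q_h$, the quantity $\lambda^{n+1}_j:=\frac43\hat p^n_{j,h}-\frac13\hat p^{n-1}_{j,h}-\gamma\nabla\cdot\bhu^{n+1}_{j,h}$ lies in $Q_h$. Hence \eqref{NSE-error-1} is exactly a coupled-type equation with pressure $\lambda_j^{n+1}$, except that $\bhu^{n+1}_{j,h}$ is not discretely divergence free.

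\textbf{Step 1 (splitting of the velocity).} Decompose $\bhu^{n+1}_{j,h}=\bw^{n+1}_{j}+\bR^{n+1}_{j}$ with $\bw^{n+1}_{j}\in\bV_h$ and $\bR^{n+1}_{j}\in\bR_h$. Because $\nabla\cdot\bV_h=0$ pointwise, $\nabla\cdot\bR^{n+1}_j=\nabla\cdot\bhu^{n+1}_{j,h}$. Lemma \ref{CR-lemma} then gives $\|\nabla \bR^{n+1}_j\|\le C_R\|\nabla\cdot\bhu^{n+1}_{j,h}\|$.

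\textbf{Step 2 (divergence of $\bhu$ is $O(1/\gamma)$).} Test \eqref{NSE-error-1} with $\bchi_h=\bR^{n+1}_j$. Move $\lambda$ to the left and use that $\nabla\cdot\bR_h\subset Q_h$, so the pressure term pairs with $\nabla\cdot\bR_j^{n+1}$. This yields
\[
\gamma\|\nabla\cdot\bhu^{n+1}_{j,h}\|^2\le C\,(\text{all other terms})\,\|\nabla\bR^{n+1}_j\|.
\]
Bound the other terms using Theorem \ref{stability-penalty-theorem}, Lemma \ref{uniform-boundedness-lemma-proof}, and inverse inequalities. The EEV coefficient involves $\|\mathcal L^n\|_{L^\infty}^2\le Ch^{-3}\|\cdot\|^2$ in 3D, which is the source of the $\Delta t/h^3$ factor. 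The time-difference term gives $1/\Delta t$ factors. Combined with Step 1, this yields $\|\nabla\cdot\bhu^{n+1}_{j,h}\|\le C C_R\,\gamma^{-1}K$, where $K$ collects these data-dependent factors, and the bound stays summable in $\Delta t$.

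\textbf{Step 3 (error equation restricted to $\bV_h$).} Subtract \eqref{NSE-error-1} from \eqref{couple-eqn-1-new}. With test functions in $\bV_h$ the pressures drop out. Test with the $\bV_h$-component $\bu^{n+1}_{j,h}-\bw^{n+1}_j=\be^{n+1}_j+\bR^{n+1}_j$. Then handle the following pieces:
\begin{itemize}
\item The BDF-2 term, via identity \eqref{BDF-2-identity}.
\item The $\bar\nu$ term minus the $\nu'_j$ extrapolation, which is controlled by $\alpha_j$ as in the stability proof.
\item The trilinear differences, split as $b^*(<\be>^n,\bu^{n+1}_{j,h},\cdot)+b^*(<\bhu_h>^n,\be^{n+1}_j,\cdot)$ plus the fluctuation analogues. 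Use \eqref{nonlinearbound3} with Lemma \ref{lemma-L3-infty} for the coupled solution and Lemma \ref{uniform-boundedness-lemma-proof} for $\bhu$.
\item The EEV difference $\mathcal L^n(u')^2-\mathcal L^n(\hat u')^2$, bounded with an inverse estimate and absorbed through the $\mu\Delta t\|\mathcal L\nabla\cdot\|^2$ term, which requires the choice of $\mu$ giving $\zeta>0$.
\end{itemize}
Every appearance of $\bR^{n+1}_j$ is a source term of size $C_R\gamma^{-1}K$ by Step 2. Sum over $n$ and $j$ and apply the discrete Gr\"onwall Lemma \ref{dgl}. This produces the two exponential factors, one from the EEV/inverse-estimate coefficients and one from the $\alpha_{\min}/\Delta t+D_\infty^2$ terms. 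It gives a bound on $\Delta t\sum\|\nabla\be^n_j\|^2$, and the ensemble-mean estimate \eqref{gamma-theorem} follows by the triangle inequality over $2\be^n-\be^{n-1}$.

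\textbf{Step 4 (pressure).} Subtract the equations again, now with general $\bchi_h\in\bX_h$. This isolates $(p^{n+1}_{j,h}-\lambda^{n+1}_j,\nabla\cdot\bchi_h)$ in terms of the velocity errors and $\bR$. Apply the inf-sup condition \eqref{infsup}, bounding each remaining term by $\|\nabla\bchi_h\|$ times norms already controlled. This produces the extra multiplicative factor, which contains $\|\bar\nu\|_\infty^2$, $\|\nu'_j\|^2_\infty$, $\Delta t/(\alpha_{\min}h^3)$, $1/\Delta t^2$ and $\zeta$-dependent terms. The discrete time-derivative error has no uniform bound at this level, which is why a $1/\Delta t^2$ appears.

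The main obstacle is Step 3's treatment of the nonlinear and eddy-viscosity differences. These require $L^\infty$ and $W^{1,3}$ control on both solutions, since only $L^2$-type stability is available for $\bhu$; I would invoke Lemma \ref{uniform-boundedness-lemma-proof} (valid as $\gamma\to\infty$) together with inverse inequalities, accepting the $h^{-3}$ dependence inside the exponential.
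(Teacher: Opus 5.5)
Your Step 2, which is where the factor $1/\gamma$ has to come from, has a genuine gap. Testing \eqref{NSE-error-1} with $\bchi_h=\bR^{n+1}_j$ does not remove the pressure. Because $\nabla\cdot\bR^{n+1}_j=\nabla\cdot\bhu^{n+1}_{j,h}\neq 0$, unpacking $\lambda^{n+1}_j$ gives
\[
\gamma\|\nabla\cdot\bhu^{n+1}_{j,h}\|^2=\tfrac13\big(4\hp^{n}_{j,h}-\hp^{n-1}_{j,h},\nabla\cdot\bhu^{n+1}_{j,h}\big)+(\text{velocity terms tested with }\bR^{n+1}_j),
\]
so the projection pressures sit among your ``other terms''. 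Nothing you invoke controls them. Theorem \ref{stability-penalty-theorem} bounds only velocities, and Lemma \ref{uniform-boundedness-lemma-proof} and inverse inequalities say nothing about $\hp$. Without a $\gamma$-independent bound on $\hp^n_{j,h}$ you cannot conclude $\|\nabla\cdot\bhu^{n+1}_{j,h}\|=O(1/\gamma)$. Your Steps 3--4 treat $\bR^{n+1}_j$ as an $O(1/\gamma)$ source, so they inherit the gap.

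The paper never lets $\hp$ enter the velocity analysis. It subtracts the uncombined Step 1 equation \eqref{NSE-step-1}, which contains no pressure, from \eqref{couple-eqn-1-new}. The only pressure left is then the coupled $p^{n+1}_{j,h}$, controlled by $\Delta t\sum_n\|p^n_{j,h}\|^2\le C_*$ from Lemma \ref{lemma-L3-infty}. Testing with $\be^{n+1}_j$ and using $(p^{n+1}_{j,h},\nabla\cdot\be^{n+1}_{j,\bR})\le\frac{1}{2\gamma}\|p^{n+1}_{j,h}\|^2+\frac{\gamma}{2}\|\nabla\cdot\be^{n+1}_{j,\bR}\|^2$ produces the $1/\gamma$. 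The $\btu$-differences in the BDF-2 term are telescoped with $\|\tilde\be^{n+1}_j\|\le\|\be^{n+1}_j\|$ and $\|2\tilde\be^{n+1}_j-\tilde\be^n_j\|\le\|2\be^{n+1}_j-\tilde\be^n_j\|$. Gr\"onwall plus Lemma \ref{CR-lemma} then give the $\bR_h$-component bound, and only afterwards is the $\bV_h$-component treated, with a second Gr\"onwall.

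You can repair your argument in either of two ways. One is to test \eqref{NSE-step-1} itself with $\bR^{n+1}_j$. The other is to extract from \eqref{NSE-step-2-1}, \eqref{infsup} and $\|\btu^n_{j,h}\|\le\|\bhu^n_{j,h}\|$ the estimate $\beta\|\hp^n_{j,h}\|\le C\Delta t^{-1}\|\bhu^n_{j,h}\|$. After that, your Step 3 is a valid and somewhat cleaner substitute for the paper's Step 2: it tests the combined equation with $\bV_h$-functions, where all pressures and the grad-div term vanish. Note, however, that the repaired route yields a pointwise-in-time $O(1/\gamma)$ bound with different constants. It proves convergence as $\gamma\to\infty$, but not the specific right-hand side stated in the theorem.
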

	
	\begin{proof}
		Denote $\be_{j}^{n+1}:=\bu_{j,h}^{n+1}-\bhu_{j,h}^{n+1}$, $\tilde{\be}_{j}^{n+1}:=\bu_{j,h}^{n+1}-\btu_{j,h}^{n+1}$ and use the following $H^1$-orthogonal decomposition of the error
		$$\be_{j}^{n+1}:=\be_{j,0}^{n+1}+\be_{j,\bR}^{n+1},$$
		with $\be_{j,0}^{n+1}\in\bV_h$ and $\be_{j,\bR}^{n+1}\in\bR_h$, for $n=0,1,\cdots,M-1$.\\
		\textbf{Step 1:} Estimate of $\be_{j,\bR}^{n+1}$: Subtracting \eqref{NSE-step-1} from \eqref{couple-eqn-1-new} gives\begin{align}
			&\frac{1}{2 \Delta t} \left(3 \be^{n+1}_j - 4\tilde{\be}^n_{j} + \tilde{\be}^{n-1}_{j}, \bchi_{h} \right) + b^* \left(<{\bhu}_h>^n, \be^{n+1}_j , \bchi_{h} \right) + b^* \left(<{\be}>^n, \bu^{n+1}_{j,h}, \bchi_{h}\right)  \nonumber \\ &+ \left(\bar{\nu}\nabla \be^{n+1}_{j}, \nabla \bchi_{h}\right) + \gamma \left(\nabla \cdot \be^{n+1}_{j}, \nabla \cdot\bchi_{h}\right) - \left(p^{n+1}_{j,h}, \nabla \cdot 
			\bchi_{h} \right)  + \left(\nu_T(\hat{u}_h^{'},t_n)\nabla \be_j^{n+1},\nabla\bchi_h\right)\nonumber\\& + \mu\Delta t\left(\big\{(\mathcal{L}^n(u^{'}_{h}))^2-(\mathcal{L}^n(\hat{u}^{'}_{h}))^2\big\}\nabla\bu_{j,h}^{n+1},\nabla\bchi_h\right) = - b^* \left(\bhu^{'n}_{j,h} , 2\be^{n}_{j} - \be^{n-1}_{j} , \bchi_{h}\right) \nonumber \\ & - b^* \left(\be^{'n}_{j} , 2\bu^{n}_{j,h} - \bu^{n-1}_{j,h} , \bchi_{h}\right) - \left(\nu^{'}_j \nabla (2\be^n_{j} - \be^{n-1}_{j}) , \nabla \bchi_{h}\right). \label{NSE-error-3-n}
		\end{align}
		Take $\bchi_h=\be_j^{n+1}$ in \eqref{NSE-error-3-n} which yields $ b^* \left(<{\bhu}_h>^n, \be^{n+1}_j , \be^{n+1}_{j} \right)=0$, and use the identity in \eqref{BDF-2-identity} to get
		\begin{align}
			&\frac{1}{4 \Delta t} \left( \|\be^{n+1}_{j}\|^2 + \|2\be^{n+1}_{j} - \tilde{\be}^{n}_{j}\|^2 - \|\tilde{\be}^{n}_{j}\|^2 - \|2\tilde{\be}^{n}_{j} - \tilde{\be}^{n-1}_{j}\|^2 + \|\be^{n+1}_{j} - 2\tilde{\be}^{n}_{j} + \tilde{\be}^{n-1}_{j}\|^2\right) \nonumber \\ &+ b^* \left(<{\be}>^n, \bu^{n+1}_{j,h}, \be^{n+1}_{j}\right) + \|\bar{\nu}^{\frac{1}{2}}\nabla \be^{n+1}_{j}\|^2 + \gamma \|\nabla \cdot \be^{n+1}_{j,\bR}\|^2 - \left(p^{n+1}_{j,h} , \nabla \cdot 
			\be^{n+1}_{j,\bR} \right) \nonumber \\ & + \mu\Delta t\|\mathcal{L}^n(\hat{u}^{'}_{h})\nabla\be_j^{n+1}\|^2\nonumber+\mu\Delta t\Big(\big\{(\mathcal{L}^n(u^{'}_{h}))^2-(\mathcal{L}^n(\hat{u}^{'}_{h}))^2\big\}\nabla\bu_{j,h}^{n+1},\nabla\be_j^{n+1}\Big) \nonumber \\ &=  - b^* \left(\bhu^{'n}_{j,h} , 2\be^{n}_{j} - \be^{n-1}_{j} , \be^{n+1}_{j}\right) - b^* \left(\be^{'n}_{j} , 2\bu^{n}_{j,h} - \bu^{n-1}_{j,h} , \be^{n+1}_{j}\right) - \left(\nu^{'}_j \nabla (2\be^n_{j} - \be^{n-1}_{j}) , \nabla \be^{n+1}_{j}\right). \label{NSE-error-4-n}
		\end{align}
		Now, using nonlinear bound from \eqref{nonlinearbound3}, estimate in Lemma \ref{lemma-L3-infty}, and Young's inequality on the trilinear form in the left-hand side
		\begin{align}
			b^* \left(<{\be}>^n, \bu^{n+1}_{j,h}, \be^{n+1}_{j}\right) &\leq C \|<{\be}>^n\| \left( \|\nabla \bu^{n+1}_{j,h}\|_{L^3} + \| \bu^{n+1}_{j,h}\|_{L^\infty} \right) \| \nabla \be^{n+1}_{j}\| \nonumber\\ 
			&\leq CC_{*} \|<{\be}>^n\| \| \nabla \be^{n+1}_{j}\| \leq \frac{\alpha_j}{8} \|\nabla \be^{n+1}_{j}\|^2 + \frac{C}{\alpha_j}\|<{\be}>^n\|^2.\label{nbd-1}
		\end{align}
		Applying Cauchy-Schwarz and Young’s inequalities, we have
		\begin{align}
			\left(p^{n+1}_{j,h} , \nabla \cdot 
			\be^{n+1}_{j,\bR} \right) \leq \frac{1}{2\gamma} \|p^{n+1}_{j,h}\|^2 + \frac{\gamma}{2} \|\nabla \cdot \be^{n+1}_{j,\bR}\|^2. \nonumber
		\end{align}
		Using H\"older's and Young's inequalities, yields
		\begin{align}
			\left(\nu^{'}_{j} \nabla (2\be^n_{j} - \be^{n-1}_{j}) , \nabla \be^{n+1}_{j}\right)
			&\leq 2 \|\nu^{'}_{j}\|_{\infty} \| \nabla \be^n_{j}\|  \| \nabla \be^{n+1}_{j} \| + \|\nu^{'}_{j}\|_{\infty} \| \nabla \be^{n-1}_{j}\| \| \nabla \be^{n+1}_{j} \| \nonumber \\
			&\leq \|\nu^{'}_j \|_{\infty} \| \nabla \be^{n+1}_{j} \|^2 + \|\nu^{'}_j\|_{\infty} \|\nabla \be^{n}_{j}\|^2 + \frac{1}{2} \|\nu^{'}_j\|_{\infty} \| \nabla \be^{n+1}_{j} \|^2 + \frac{1}{2} \|\nu^{'}_j\|_{\infty} \|\nabla \be^{n-1}_{j}\|^2 \nonumber
			\\
			&\leq \frac{3}{2}\|\nu^{'}_j \|_{\infty} \| \nabla \be^{n+1}_{j} \|^2 + \|\nu^{'}_j\|_{\infty} \|\nabla \be^{n}_{j}\|^2 + \frac{1}{2} \|\nu^{'}_j\|_{\infty} \|\nabla \be^{n-1}_{j}\|^2 . \label{vis-fl}
		\end{align}
		Using identity \eqref{trilinear-identitiy}, the Cauchy-Schwarz, H\"older's, Poincar\'e and triangle inequalities to estimate the trilinear form, we obtain
		\begin{align}
			&- b^* \left( \bhu^{'n}_{j,h} , 2\be^n_{j} - \be^{n-1}_{j} , \be^{n+1}_{j} \right)
			= b^* \left( \bhu^{'n}_{j,h} , \be^{n+1}_{j}, 2\be^n_{j} - \be^{n-1}_{j} \right) \notag \\
			&= \left( \bhu^{'n}_{j,h} \cdot \nabla \be^{n+1}_{j}, (2\be^n_{j} - \be^{n-1}_{j}) \right) + \frac{1}{2} \left((\nabla \cdot \bhu^{'n}_{j,h}) \be^{n+1}_{j}, 2\be^n_{j} - \be^{n-1}_{j} \right) \notag \\
			&\leq \| \bhu^{'n}_{j,h} \cdot \nabla \be^{n+1}_{j}\| \|2 \be^n_{j}- \be^{n-1}_{j} \| + \frac{1}{2}\| \nabla \cdot \bhu^{'n}_{j,h}\|_{L^\infty} \| \be^{n+1}_{j} \| \|2 \be^n_{j}- \be^{n-1}_{j} \| \notag \\
			& \le C\left(2\|\nabla \be^{n}_{j} \| +  \| \nabla\be^{n-1}_{j} \|\right)\left(\| \bhu^{'n}_{j,h} \cdot \nabla \be^{n+1}_{j} \| + \frac12 \| \nabla\cdot \bhu^{'n}_{j,h}\|_{L^\infty} \| \nabla \be^{n+1}_{j} \|\right). \notag
		\end{align}
		Again, using \eqref{basic-ineq} and Young's inequality, gives\begin{align} 
			&- b^* \left( \bhu^{'n}_{j,h} , 2\be^n_{j} - \be^{n-1}_{j} , \be^{n+1}_{j} \right)\le C\left(\|\nabla \be^{n}_{j} \| +  \| \nabla\be^{n-1}_{j} \|\right)\left(\|| \bhu^{'n}_{j,h}| \nabla \be^{n+1}_{j} \| + \frac{1}{2}\|\nabla \cdot \bhu^{'n}_{j,h}\|_{L^\infty} \| \nabla \be^{n+1}_{j} \|\right) \nonumber \\&\le \frac{\alpha_j}{8}\| \nabla \be^{n+1}_{j} \|^{2} + C\left(\|\nabla \be^{n}_{j} \| +  \| \nabla\be^{n-1}_{j} \|\right)\|\mathcal{L}^n(\hat{u}^{'}_{h}) \nabla \be^{n+1}_{j} \| + \frac{C}{\alpha_j}\left(\|\nabla \be^{n}_{j} \|^{2} +  \| \nabla\be^{n-1}_{j} \|^{2}\right) \|\nabla \cdot \bhu^{'n}_{j,h}\|^{2}_{L^\infty} \nonumber\\
			& \le \frac{\alpha_j}{8}\| \nabla \be^{n+1}_{j} \|^{2} + \frac{\mu \Delta t}{2} \|\mathcal{L}^n(\hat{u}^{'}_{h}) \nabla \be^{n+1}_{j} \|^{2} + \left(\frac{C}{\mu \Delta t} + \frac{C}{\alpha_j} \|\nabla \cdot \bhu^{'n}_{j,h}\|^{2}_{L^\infty}\right) \left(\|\nabla \be^{n}_{j} \|^{2} +  \| \nabla\be^{n-1}_{j} \|^{2}\right).\label{nonlin-bd-1}
		\end{align}
		Using the nonlinear bound in \eqref{nonlinearbound3}, triangle inequality, the estimate in Lemma \ref{lemma-L3-infty}, and Young's inequality provides
		\begin{align}
			-b^* \left(\be^{'n}_{j} , 2\bu^{n}_{j,h} - \bu^{n-1}_{j,h} , \be^{n+1}_{j}\right) &\leq C \|\be^{'n}_{j}\| \left( \|\nabla (2\bu^{n}_{j,h} - \bu^{n-1}_{j,h}) \|_{L^3} + \|2\bu^{n}_{j,h} - \bu^{n-1}_{j,h}\|_{L^\infty}\right) \|\nabla \be^{n+1}_{j}\| \nonumber \\
			& \leq CC_{*}\|\be^{'n}_{j}\|\|\nabla \be^{n+1}_{j}\| \leq \frac{\alpha_j}{8} \|\nabla \be^{n+1}_{j}\|^2 + \frac{C}{\alpha_j}\| \be^{'n}_{j}\|^2.\label{nonlin-bd-2-new2}
		\end{align}
		We apply H\"older’s inequality to get
		\begin{align}
			\mu\Delta t\Big(\big\{(\mathcal{L}^n(u^{'}_{h}))^2-&(\mathcal{L}^n(\hat{u}^{'}_{h}))^2\big\}\nabla\bu_{j,h}^{n+1},\nabla\be_{j}^{n+1}\Big)\nonumber\\&\leq \mu\Delta t\|(\mathcal{L}^n(u^{'}_{h}))^2-(\mathcal{L}^n(\hat{u}^{'}_{h}))^2\|_{L^\infty}\|\nabla\bu_{j,h}^{n+1}\|\|\nabla\be_j^{n+1}\|.\label{prandtl-term}
		\end{align}
		Apply the triangle inequality, Lemma \ref{lemma-L3-infty}  and Lemma \ref{uniform-boundedness-lemma-proof},  Agmon’s inequality \cite{Robinson2016Three-Dimensional}, and a discrete inverse inequality to obtain
		\begin{align}
			\|(\mathcal{L}^n(u^{'}_{h}))^2-(\mathcal{L}^n(\hat{u}^{'}_{h}))^2\|_{L^\infty} &=\|\sum_{i=1}^J\left(|\bu_{i,h}^{'n}|^2-|\bhu_{i,h}^{'n}|^2\right)\|_{L^\infty}\nonumber\\
			&\leq\sum_{i=1}^J\|(\bu_{i,h}^{'n}-\bhu_{i,h}^{'n})\cdot(\bu_{i,h}^{'n}+\bhu_{i,h}^{'n})\|_{L^\infty}\nonumber\\&\leq \sum_{i=1}^J\|\bu_{i,h}^{'n}-\bhu_{i,h}^{'n}\|_{L^\infty}\|\bu_{i,h}^{'n}+\bhu_{i,h}^{'n}\|_{L^\infty}\nonumber\\
			&\le\sum_{i=1}^J\|\be_i^{'n}\|_{L^\infty}\left(\|\bu_{i,h}^{'n}\|_{L^\infty}+\|\bhu_{i,h}^{'n}\|_{L^\infty}\right)\nonumber\\ 
			&\le \sum_{i=1}^J\|\be_i^{'n}\|_{L^\infty}\le Ch^{-\frac32} \sum_{i=1}^J \|\be^{'n}_{i}\|. \label{prandtl-diff}
		\end{align}
		Use \eqref{prandtl-diff} in \eqref{prandtl-term}, the stability estimate for Algorithm \ref{coupled-alg-com}, and Young's inequalities to get
		\begin{align}
			\mu\Delta t\Big(\big\{(\mathcal{L}^n(u^{'}_{h}))^2-&(\mathcal{L}^n(\hat{u}^{'}_{h}))^2\big\}\nabla\bu_{j,h}^{n+1},\nabla\be_{j}^{n+1}\Big)\leq C h^{-\frac32}\Delta t \|\nabla\bu_{j,h}^{n+1}\|\|\nabla\be_j^{n+1}\|\sum_{i=1}^J \|\be^{'n}_{i}\|\nonumber\\&\le C h^{-\frac32}\Delta t^{\frac12}\alpha_{j}^{-\frac12} \sum_{i=1}^J \|\be^{'n}_{i}\|\|\nabla\be_j^{n+1}\|\nonumber\\&\le \frac{\alpha_j}{8}\|\nabla\be^{n+1}_{j}\|^2+\frac{C\Delta t}{h^3\alpha_{j}^2}\sum_{i=1}^J\|\be^{'n}_{i}\|^2.
		\end{align}
		Using the above estimates in \eqref{NSE-error-4-n} and dropping the nonnegative term involving $\|\be^{n+1}_{j} - 2\tilde{\be}^{n}_{j} + \tilde{\be}^{n-1}_{j}\|^2$, produces
		\begin{align}
			&\frac{1}{4 \Delta t} \left( \|\be^{n+1}_{j}\|^2 + \|2\be^{n+1}_{j} - \tilde{\be}^{n}_{j}\|^2 - \|\tilde{\be}^{n}_{j}\|^2 - \|2\tilde{\be}^{n}_{j} - \tilde{\be}^{n-1}_{j}\|^2\right)+\frac{\bar{\nu}_{\min}}{2}\|\nabla \be^{n+1}_{j}\|^2+\frac{\gamma}{2}\|\nabla \cdot \be^{n+1}_{j,\bR}\|^2 \nonumber \\ &+\frac{\mu \Delta t}{2} \|\mathcal{L}^n(\hat{u}^{'}_{h})\nabla\be^{n+1}_{j}\|^2 \leq \left(\frac{C}{\mu \Delta t} + \frac{C}{\alpha_j} \|\nabla \cdot \bhu^{'n}_{j,h}\|^{2}_{L^\infty}+ \|\nu^{'}_j\|_{\infty}\right)\|\nabla \be^{n}_{j}\|^2+ \frac{C\Delta t}{h^3\alpha_j^2}\sum_{i=1}^J\|\be^{'n}_{i}\|^2\nonumber \\ & +\left(\frac{C}{\mu \Delta t} + \frac{C}{\alpha_j} \|\nabla \cdot \bhu^{'n}_{j,h}\|^{2}_{L^\infty}+ \frac{1}{2} \|\nu^{'}_j\|_{\infty}\right)\|\nabla \be^{n-1}_{j}\|^2+ \frac{1}{2\gamma} \|p^{n+1}_{j,h}\|^2 + \frac{C}{\alpha_{j}} \left(\|\hspace{-1mm}<\hspace{-1mm}{\be}\hspace{-1mm}>^n\hspace{-1mm}\|^2 + \| \be^{'n}_{j}\|^2\right). \label{NSE-error-6-n}
		\end{align}
		On the left hand side, we will bound the term $-\|\tilde{\be}^n\|^2$ and $-\|2{\be}^{n+1}-\tilde{\be}^n\|^2$. Choose $\bv_h=\tilde{\be}_{j}^{n+1}$ in \eqref{NSE-step-2-1} to get \begin{align}
			\|\tilde{\be}_j^{n+1}\|\le\|\be_j^{n+1}\|.\label{tilde-bd1}
		\end{align} Now for \eqref{NSE-step-2-1}, we have,
		\begin{align*}
			\frac{3}{2\Delta t} \left(\tilde{\be}^{n+1}_{j} - \be^{n+1}_{j}, \bv_h \right) - \left(\hat{p}^{n+1}_{j,h}, \nabla \cdot \bv_h \right) = 0.
		\end{align*}
		Choosing $\bv_h=2\tilde{\be}^{n+1}-\tilde{\be}^n$, the pressure term vanishes. Adding and subtracting $\tilde{\be}_j^n$ and using the Cauchy-Schwarz inequality, we get
		\begin{align}
			\|2\tilde{\be}^{n+1}_{j}-\tilde{\be}^n_{j}\|\le\|2\be^{n+1}_{j}-\tilde{\be}^n_{j}\|.\label{tilde-bd2}
		\end{align}
		Substituting the above bounds into \eqref{NSE-error-6-n} and rearranging yields,
		\begin{align}
			&\frac{1}{4\Delta t} \left( \|\be^{n+1}_{j}\|^2- \|\be^{n}_{j}\|^2 + \|2\be^{n+1}_{j} - \tilde{\be}^n_{j} \|^2 - \|2\be^{n}_{j} - \tilde{\be}^{n-1}_{j} \|^2 \right) + \frac{\bar{\nu}_{\min}}{2} \left(\| \nabla \be^{n+1}_{j,h}\|^2 - \| \nabla \be^{n}_{j,h}\|^2 \right) \notag \\ &+ \frac{\gamma}{2}\|\nabla \cdot \be^{n+1}_{j,\bR}\|^2+ \left(\frac{\bar{\nu}_{\min}}{2}-\|\nu^{'}_j\|_{\infty} -\frac{C}{\mu \Delta t} - \frac{C}{\alpha_j} \|\nabla \cdot \bhu^{'n}_{j,h}\|^{2}_{L^\infty} \right) \left(\| \nabla \be^{n}_{j,h}\|^2 - \| \nabla \be^{n-1}_{j,h}\|^2 \right)\nonumber\\& + \left(\frac{\alpha_j}{2} -\frac{C}{\mu \Delta t} - \frac{C}{\alpha_j} \|\nabla \cdot \bhu^{'n}_{j,h}\|^{2}_{L^\infty} \right) \| \nabla \be^{n-1}_{j,h} \|^2    + \frac{\mu \Delta t}{2} \|\mathcal{L}^n(\hat{u}^{'}_{h}) \nabla \be^{n+1}_{j,h}\|^2 \leq  \frac{1}{2\gamma} \|p^{n+1}_{j,h}\|^2 \nonumber\\&+ \frac{C}{\alpha_{j}} \left(\|\hspace{-1mm}<\hspace{-1mm}{\be}\hspace{-1mm}>^n\hspace{-1mm}\|^2 + \| \be^{'n}_{j}\|^2\right) + \frac{C\Delta t}{h^3\alpha_j^2}\sum_{i=1}^J\|\be^{'n}_{i}\|^2. \notag
		\end{align}
		Now, multiplying both sides by $4\Delta t$ and summing over the time steps $n=1,2,\cdots,M-1$, we obtain
		\begin{align}
			& \|\be^{M}_{j}\|^2 +\|2\be_j^M-\tilde{\be}_j^{M-1}\|^2+4\zeta\Delta t\sum_{n=1}^{M}\|\nabla\be_{j}^{n}\|^2+ 2 \gamma\Delta t \sum_{n=2}^{M}\|\nabla \cdot \be^{n}_{j,\bR}\|^2+ 2\mu \Delta t\sum_{n=2}^M \|\mathcal{L}^n(\hat{u}^{'}_{h}) \nabla \be^{n+1}_{j,h}\|^2\nonumber\\& \leq  6\|\be^{1}_{j}\|^2 + 2\| \be^{0}_{j} \|^2 + 2 \bar{\nu}_{\min} \Delta t\left( \| \nabla \be^{0}_{j}\|^2+\| \nabla \be^{1}_{j}\|^2\right) \nonumber\\&+ \frac{C}{\alpha_j} \Delta t \sum^{M-1}_{n=1} \left(\|\hspace{-1mm}<\hspace{-1mm}{\be}\hspace{-1mm}>^n\hspace{-1mm}\|^2 + \| \be^{'n}_{j}\|^2 + \frac{\Delta t}{\alpha_{j}h^3}\sum_{i=1}^J\|\be^{'n}_{i}\|^2\right)+\frac{2}{\gamma}\Delta t\sum_{n=2}^{M}\|p^{n}_{j,h}\|^2. 
		\end{align}
		If $\alpha_{\min}>CD_{\infty}$, choosing $\mu>\frac{C\alpha_{\min}}{\Delta t\left(\alpha_{\min}^2-CD_{\infty}^2\right)}$ and
		dropping the nonnegative term from left-hand-side yields
		\begin{align}
			& \|\be^{M}_{j}\|^2+  \gamma\Delta t \sum^{M}_{n=2} \| \nabla \cdot \be^{n}_{j,\bR} \|^2 \leq  6\|\be^{1}_{j}\|^2 + 2\| \be^{0}_{j} \|^2 + 2 \bar{\nu}_{\min} \Delta t\left( \| \nabla \be^{0}_{j,h}\|^2+\| \nabla \be^{1}_{j,h}\|^2\right)\nonumber\\&+ \frac{C}{\alpha_{\min}} \Delta t \sum^{M-1}_{n=1} \left(\|\hspace{-1mm}<\hspace{-1mm}{\be}\hspace{-1mm}>^n\hspace{-1mm}\|^2 + \| \be^{'n}_{j}\|^2 + \frac{\Delta t}{h^3\alpha_{\min}}\sum_{i=1}^J\|\be^{'n}_{i}\|^2\right)+\frac{2}{\gamma}\Delta t\sum_{n=2}^{M}\|p^{n}_{j,h}\|^2. \label{befor-inital}
		\end{align}
		Using the Cauchy-Schwarz inequality the following identity can be proven
		\begin{align}
			\left(\sum_{i=1}^{n}a_i\right)^2\le n\sum_{i=1}^na_i^2.\label{n-inequality}
		\end{align}
		Using the triangle inequality and \eqref{n-inequality}, we obtain
		\begin{align}
			\|\hspace{-1mm}<\hspace{-1mm}{\be}\hspace{-1mm}>^n\hspace{-1mm}\|^2=\Big\|\frac1J\sum_{k=1}^{J}\left(2\be_k^n-\be_{k}^{n-1}\right)\Big\|^2\le\frac{1}{J^2}\left(\sum_{k=1}^J\|2\be_k^n-\be_{k}^{n-1}\|\right)^2\le \frac1J\sum_{k=1}^{J}\|2\be_k^n-\be_{k}^{n-1}\|^2.\label{mean-bd}
		\end{align}
		Using triangle, Young's and \eqref{mean-bd} inequalities, we can write
		\begin{align}
			\|\be^{'n}_{j}\|^2=\|2\be^{n}_{j}-\be^{n-1}_{j}-<\hspace{-1mm}{\be}\hspace{-1mm}>^n\|^2&\le 2\|2\be^{n}_{j}-\be^{n-1}_{j}\|^2+2\|<\hspace{-1mm}{\be}\hspace{-1mm}>^n\|^2\nonumber\\&\le  2\|2\be^{n}_{j}-\be^{n-1}_{j}\|^2+\frac{2}{J}\sum_{k=1}^{J}\|2\be_k^n-\be_k^{n-1}\|^2.\label{eprime-short}
		\end{align}
		Using \eqref{mean-bd}-\eqref{eprime-short} together with the triangle and Young's inequalities, we reduce as below
		\begin{align}
			&\|\hspace{-1mm}<\hspace{-1mm}{\be}\hspace{-1mm}>^n\hspace{-1mm}\|^2 + \| \be^{'n}_{j}\|^2 + \frac{\Delta t}{h^3\alpha_{\min}}\sum_{i=1}^J\|\be^{'n}_{i}\|^2 \nonumber\\&\le \frac3J\sum_{k=1}^{J}\|2\be_k^n-\be_{k}^{n-1}\|^2+2\|2\be^{n}_{j}-\be^{n-1}_{j}\|^2+ \frac{\Delta t}{h^3\alpha_{\min}}\sum_{i=1}^J\left(2\|2\be^{n}_{i}-\be^{n-1}_{i}\|^2+\frac{2}{J}\sum_{k=1}^{J}\|2\be_k^n-\be_k^{n-1}\|^2\right)\nonumber\\&\le\left(\frac{3}{J}+\frac{4\Delta t}{h^3\alpha_{\min}}\right)\sum_{i=1}^{J}\|2\be_i^n-\be_i^{n-1}\|^2+2\|2\be^{n}_{j}-\be^{n-1}_{j}\|^2\nonumber\\ &\le\left(\frac{6}{J}+\frac{8\Delta t}{h^3\alpha_{\min}}\right)\sum_{i=1}^{J}\left(4\|\be_i^n\|^2+\|\be_i^{n-1}\|^2\right)+8\|\be^{n}_{j}\|^2+2\|\be^{n-1}_{j}\|^2. \label{eprime-bd}
		\end{align}
		Using \eqref{eprime-bd}, Lemma \eqref{lemma-L3-infty} and assuming that $\bu_{j,h}^0=\bhu_{j,h}^0$ and $\bu_{j,h}^1=\bhu_{j,h}^1$,  then  \eqref{befor-inital} reduces to
		\begin{align}
			&\|\be^{M}_{j}\|^2 + \gamma \Delta t \sum^{M}_{n=2} \|\nabla \cdot \be^{n}_{j,\bR}\|^2 \leq \frac{C}{\gamma}+\frac{C}{\alpha_{\min}} \Delta t \sum^{M-1}_{n=2} \left( \left(\frac{1}{J}+\frac{\Delta t}{h^3\alpha_{\min}}\right)\sum_{i=1}^J\|\be_i^n\|^2+\|\be_j^n\|^2\right).
		\end{align}
		Summing over $j=1,2, \cdots, J$, we have
		\begin{align}
			\sum^{J}_{j=1}\|\be^{M}_{j}\|^2+ \gamma \Delta t \sum^{M}_{n=2} \sum^{J}_{j=1} \|\nabla \cdot \be^{n}_{j,\bR}\|^2 \leq \frac{C}{\gamma}+\frac{C}{\alpha_{\min}} \Delta t \left(\frac{J\Delta t}{h^3\alpha_{\min}}+1\right)\sum^{M-1}_{n=2} \sum^J_{j=1} \|\be_j^n\|^2. \label{NSE-error-10-n}
		\end{align}
		Applying the discrete Gr\"onwall inequality given in Lemma \ref{dgl}, we obtain
		\begin{align}
			&\sum^{J}_{j=1}\|\be^{M}_{j}\|^2+ \gamma \Delta t \sum^{M}_{n=2} \sum^{J}_{j=1} \|\nabla \cdot \be^{n}_{j,\bR}\|^2 \leq \frac{C}{\gamma} \exp\left\{\frac{C}{\alpha_{\min}} \left(\frac{\Delta t}{h^3\alpha_{\min}}+1\right)\right\} . \label{after-gronwall-n} 
		\end{align}
		Using Lemma \ref{CR-lemma} with \eqref{after-gronwall-n} yields the following bound
		\begin{align}
			&\Delta t \sum^{M}_{n=2}\sum^{J}_{j=1} \|\nabla\be^{n}_{j,\bR}\|^2 \leq C^{2}_{R}\Delta t \sum^{M}_{n=2}\sum^{J}_{j=1} \|\nabla \cdot \be^{n}_{j,\bR}\|^2 \leq \frac{C^{2}_{R}}{\gamma^2} \exp\left\{\frac{C}{\alpha_{\min}} \left(\frac{\Delta t}{h^3\alpha_{\min}}+1\right)\right\}, \label{NSE-error-12-n}
		\end{align}
		where $C$ is independent of $\gamma$, $\Delta t$ and $h$.\\
		\textbf{Step 2:} Estimate of $\be^{n}_{j,0}:$ To find a bound on $\Delta t \sum\limits^{M}_{n=2} \sum\limits^{J}_{j=1} \|\nabla \be^{n}_{j,0}\|$, take $\bchi_h= \be^{n+1}_{j,0}$ in \eqref{NSE-error-3-n}, which yields
		\begin{align}
			&\frac{1}{2 \Delta t} \left(3 \be^{n+1}_j - 4\tilde{\be}^n_{j} + \tilde{\be}^{n-1}_{j}, \be^{n+1}_{j,0} \right) + b^* \left(<\hspace{-1mm}{\bhu}_h\hspace{-1mm}>^n, \be^{n+1}_{j,\bR} , \be^{n+1}_{j,0} \right) + b^* \left(<\hspace{-1mm}{\be}\hspace{-1mm}>^n, \bu^{n+1}_{j,h}, \be^{n+1}_{j,0}\right) \nonumber \\ & + \left(\bar{\nu}\nabla \be^{n+1}_{j,0}, \nabla \be^{n+1}_{j,0}\right) +\mu\Delta t\left((\mathcal{L}^n(\hat{u}^{'}_{h}))^2\nabla\be_j^{n+1}, \nabla\be^{n+1}_{j,0}\right)\nonumber \\ & + \mu\Delta t\left(\big\{(\mathcal{L}^n(u^{'}_{h}))^2-(\mathcal{L}^n(\hat{u}^{'}_{h}))^2\big\}\nabla\bu_{j,h}^{n+1},\nabla \be^{n+1}_{j,0}\right) = - b^* \left(\bhu^{'n}_{j,h} , 2\be^{n}_{j} - \be^{n-1}_{j} , \be^{n+1}_{j,0}\right)\nonumber \\ & - b^* \left(\be^{'n}_{j} , 2\bu^{n}_{j,h} - \bu^{n-1}_{j,h} , \be^{n+1}_{j,0}\right) - \left(\nu^{'}_j \nabla (2\be^n_{j,0} - \be^{n-1}_{j,0}) , \nabla \be^{n+1}_{j,0}\right). \label{NSE-error-13-n}
		\end{align}
		Applying the bound in \eqref{nonlinearbound}, using the stability estimate in \eqref{stability-penalty} and Young's inequality, gives
		\begin{align}
			- &b^* \Big(<\hspace{-1mm}{\bhu}_h\hspace{-1mm}>^n, \be^{n+1}_{j,\bR} , \be^{n+1}_{j,0} \Big)\le C\|\nabla\hspace{-1mm} <\hspace{-1mm}{\bhu}_h\hspace{-1mm}>^n\hspace{-1mm}\| \|\nabla\be^{n+1}_{j,\bR}\| \|\nabla\be^{n+1}_{j,0}\|\nonumber\\&\le C\zeta^{-\frac12}\Delta t^{-\frac12}\|\nabla\be^{n+1}_{j,\bR}\| \|\nabla\be^{n+1}_{j,0}\|\nonumber\\&\le\frac{\alpha_j}{10}\|\nabla \be^{n+1}_{j,0}\|^2+\frac{C}{\Delta t\alpha_j\zeta}\|\nabla\be^{n+1}_{j,\bR}\|^2.\label{step-2-bd1}
		\end{align}
		Applying the bound in 
		\eqref{nonlinearbound3}, using Lemma \ref{lemma-L3-infty} and Young's inequality, we have
		\begin{align}
			-b^* \left(<\hspace{-1mm}{\be}\hspace{-1mm}>^n, \bu^{n+1}_{j,h}, \be^{n+1}_{j,0}\right)&\le C\|\hspace{-1mm}<\hspace{-1mm}{\be}\hspace{-1mm}>^n\hspace{-1mm}\|\left(\|\nabla\bu^{n+1}_{j,h}\|_{L^3}+\|\bu^{n+1}_{j,h}\|_{L^\infty}\right)\|\nabla \be^{n+1}_{j,0}\|\nonumber\\&\le C\|\hspace{-1mm}<\hspace{-1mm}{\be}\hspace{-1mm}>^n\hspace{-1mm}\|\|\nabla \be^{n+1}_{j,0}\|\nonumber\\&\le\frac{\alpha_j}{10}\|\nabla \be^{n+1}_{j,0}\|^2+\frac{C}{\alpha_j}\|\hspace{-1mm}<\hspace{-1mm}{\be}\hspace{-1mm}>^n\hspace{-1mm}\|^2.
		\end{align}
		Decompose \begin{align}
			\left((\mathcal{L}^n(\hat{u}^{'}_{h}))^2\nabla\be_j^{n+1}, \nabla\be^{n+1}_{j,0}\right)=\left((\mathcal{L}^n(\hat{u}^{'}_{h}))^2\nabla\be_{j,0}^{n+1}, \nabla\be^{n+1}_{j,0}\right)+\left((\mathcal{L}^n(\hat{u}^{'}_{h}))^2\nabla\be_{j,\bR}^{n+1}, \nabla\be^{n+1}_{j,0}\right).
		\end{align}
		Apply the Cauchy–Schwarz and Young's inequalities, and the uniform boundedness result in Lemma \ref{uniform-boundedness-lemma-proof} (which holds for sufficiently large $\gamma$) to obtain
		\begin{align}
			-\Big((\mathcal{L}^n(\hat{u}^{'}_{h}))^2\nabla\be_{j,\bR}^{n+1},\nabla\be_{j,0}^{n+1}\Big)&\le\|\mathcal{L}^n(\hat{u}^{'}_{h})\nabla\be_{j,\bR}^{n+1}\|\|\mathcal{L}^n(\hat{u}^{'}_{h})\nabla\be_{j,0}^{n+1}\|\nonumber\\
			&\le\|\mathcal{L}^n(\hat{u}^{'}_{h})\nabla\be_{j,\bR}^{n+1}\|^2+\frac{1}{4}\|\mathcal{L}^n(\hat{u}^{'}_{h})\nabla\be_{j,0}^{n+1}\|^2\nonumber\\
			&\le\|\mathcal{L}^n(\hat{u}^{'}_{h})\|_{L^\infty}^2\|\nabla\be_{j,\bR}^{n+1}\|^2+\frac{1}{4}\|\mathcal{L}^n(\hat{u}^{'}_{h})\nabla\be_{j,0}^{n+1}\|^2\nonumber\\
			&\le C\|\nabla\be_{j,\bR}^{n+1}\|^2+\frac{1}{4}\|\mathcal{L}^n(\hat{u}^{'}_{h})\nabla\be_{j,0}^{n+1}\|^2.
		\end{align}
		Apply H\"older’s inequality, \eqref{prandtl-diff}, the stability estimate for Algorithm \ref{coupled-alg-com}, and Young's inequalities to get
		\begin{align}
			-\mu\Delta t\Big(\big\{(\mathcal{L}^n(u^{'}_{h}))^2&-(\mathcal{L}^n(\hat{u}^{'}_{h}))^2\big\}\nabla\bu_{j,h}^{n+1},\nabla\be_{j,0}^{n+1}\Big)\nonumber\\&\le\mu\Delta t\|(\mathcal{L}^n(u^{'}_{h}))^2-(\mathcal{L}^n(\hat{u}^{'}_{h}))^2\|_{L^\infty}\|\nabla\bu_{j,h}^{n+1}\|\|\nabla\be_{j,0}^{n+1}\|\nonumber\\&\le Ch^{-\frac32}\Delta t \|\nabla\bu_{j,h}^{n+1}\|\sum_{i=1}^J \|\be^{'n}_{i}\|\|\nabla\be_{j,0}^{n+1}\|\nonumber\\&\le Ch^{-\frac32}\Delta t^{\frac12}\alpha_{j}^{-\frac12}\sum_{i=1}^J \|\be^{'n}_{i}\|\|\nabla\be_{j,0}^{n+1}\|\nonumber\\& \le\frac{\alpha_j}{10}\|\nabla\be_{j,0}^{n+1}\|^2+\frac{C\Delta t}{h^3\alpha_j^2}\sum_{i=1}^J\|\be_i^{'n}\|^2.
		\end{align}
		Using identity \eqref{trilinear-identitiy}, the Cauchy-Schwarz, H\"older's, and triangle inequalities to estimate the trilinear form, we obtain
		\begin{align}
			&- b^* \left( \bhu^{'n}_{j,h} , 2\be^n_{j} - \be^{n-1}_{j} , \be^{n+1}_{j,0} \right)
			= b^* \left( \bhu^{'n}_{j,h} , \be^{n+1}_{j,0}, 2\be^n_{j} - \be^{n-1}_{j} \right) \notag \\
			&= \left( \bhu^{'n}_{j,h} \cdot \nabla \be^{n+1}_{j,0}, (2\be^n_{j} - \be^{n-1}_{j}) \right) + \frac{1}{2} \left((\nabla \cdot \bhu^{'n}_{j,h}) \be^{n+1}_{j,0}, 2\be^n_{j} - \be^{n-1}_{j} \right) \notag \\
			&\leq \| \bhu^{'n}_{j,h} \cdot \nabla \be^{n+1}_{j,0}\| \|2 \be^n_{j}- \be^{n-1}_{j} \| + \frac{1}{2}\| \nabla \cdot \bhu^{'n}_{j,h}\|_{L^\infty} \| \be^{n+1}_{j,0} \| \|2 \be^n_{j}- \be^{n-1}_{j} \| \notag \\
			& \le \left(2\|\be^{n}_{j} \| +  \|\be^{n-1}_{j} \|\right)\left(\| \bhu^{'n}_{j,h} \cdot \nabla \be^{n+1}_{j,0} \| + \frac12 \| \nabla\cdot \bhu^{'n}_{j,h}\|_{L^\infty} \| \nabla \be^{n+1}_{j,0} \|\right). \notag
		\end{align}
		Again, using \eqref{basic-ineq} and Young's inequality, gives\begin{align} 
			&- b^* \left( \bhu^{'n}_{j,h} , 2\be^n_{j} - \be^{n-1}_{j} , \be^{n+1}_{j,0} \right)\le C\left(\|\be^{n}_{j} \| +  \| \be^{n-1}_{j} \|\right)\left(\|| \bhu^{'n}_{j,h}| \nabla \be^{n+1}_{j,0} \| + \|\nabla \cdot \bhu^{'n}_{j,h}\|_{L^\infty} \| \nabla \be^{n+1}_{j,0} \|\right) \nonumber \\&\le \frac{\alpha_j}{10}\| \nabla \be^{n+1}_{j,0} \|^{2} + C\left(\|\be^{n}_{j} \| +  \|\be^{n-1}_{j} \|\right)\|\mathcal{L}^n(\hat{u}^{'}_{h}) \nabla \be^{n+1}_{j,0} \| + \frac{C}{\alpha_j}\left(\| \be^{n}_{j} \|^{2} +  \|\be^{n-1}_{j} \|^{2}\right) \|\nabla \cdot \bhu^{'n}_{j,h}\|^{2}_{L^\infty} \nonumber\\
			& \le \frac{\alpha_j}{10}\| \nabla \be^{n+1}_{j,0} \|^{2} + \frac{\mu \Delta t}{4} \|\mathcal{L}^n(\hat{u}^{'}_{h}) \nabla \be^{n+1}_{j,0} \|^{2} + \left(\frac{C}{\Delta t} + \frac{C}{\alpha_j} \|\nabla \cdot \bhu^{'n}_{j,h}\|^{2}_{L^\infty}\right) \left(\|\be^{n}_{j} \|^{2} +  \| \be^{n-1}_{j} \|^{2}\right).\label{nonlin-bd-2}
		\end{align}
		Using the nonlinear bound in \eqref{nonlinearbound3}, triangle inequality, estimate in Lemma \ref{lemma-L3-infty}, and Young's inequality provides
		\begin{align}
			-b^* \left(\be^{'n}_{j} , 2\bu^{n}_{j,h} - \bu^{n-1}_{j,h} , \be^{n+1}_{j,0}\right) &\leq C \|\be^{'n}_{j}\| \left( \|\nabla (2\bu^{n}_{j,h} - \bu^{n-1}_{j,h}) \|_{L^3} + \|2\bu^{n}_{j,h} - \bu^{n-1}_{j,h}\|_{L^\infty}\right) \|\nabla \be^{n+1}_{j,0}\| \nonumber \\
			& \leq CC_{*}\|\be^{'n}_{j}\|\|\nabla \be^{n+1}_{j,0}\| \leq \frac{\alpha_j}{10} \|\nabla \be^{n+1}_{j,0}\|^2 + \frac{C}{\alpha_j}\| \be^{'n}_{j}\|^2. 
		\end{align}
		Using H\"older's and Young's inequalities, yields as in \eqref{vis-fl},
		\begin{align}
			-\Big(\nu^{'}_{j} \nabla (2\be^n_{j,0} - \be^{n-1}_{j,0}) , \nabla \be^{n+1}_{j,0}\Big)
			\leq \frac{3}{2}\|\nu^{'}_j \|_{\infty} \| \nabla \be^{n+1}_{j,0} \|^2 + \|\nu^{'}_j\|_{\infty} \|\nabla \be^{n}_{j,0}\|^2 + \frac{1}{2} \|\nu^{'}_j\|_{\infty} \|\nabla \be^{n-1}_{j,0}\|^2.\label{step2-bd2}
		\end{align}
		Using \eqref{step-2-bd1}-\eqref{step2-bd2} into \eqref{NSE-error-13-n}, applying the identity in \eqref{BDF-2-identity} and reducing, we have\begin{align}
			&\frac{1}{2 \Delta t} \left(3 \be^{n+1}_j - 4\tilde{\be}^n_{j} + \tilde{\be}^{n-1}_{j}, \be^{n+1}_{j,0} \right)+\frac{\bar{\nu}_{\min}}{2}\|\nabla\be^{n+1}_{j,0}\|^2 +\frac{\mu \Delta t}{2} \|\mathcal{L}^n(\hat{u}^{'}_{h})\nabla \be^{n+1}_{j,0}\|^2 \nonumber \\ & \leq \frac{C}{\Delta t}\left(\frac{1}{\alpha_j\zeta}+\Delta t^2\right)\|\nabla\be^{n+1}_{j,\bR}\|^2+ \frac{C}{\alpha_{\min}}\left(\|\hspace{-1mm}<\hspace{-1mm}{\be}\hspace{-1mm}>^n\hspace{-1mm}\|^2+\| \be^{'n}_{j}\|^2+\frac{\Delta t}{h^3\alpha_{\min}}\sum_{i=1}^J\|\be_i^{'n}\|^2  \right)\nonumber \\ &+\left(\frac{C}{\Delta t} + \frac{C}{\alpha_{\min}} \|\nabla \cdot \bhu^{'n}_{j,h}\|^{2}_{L^\infty}\right) \left(\|\be^{n}_{j} \|^{2} +  \|\be^{n-1}_{j} \|^{2}\right)+\|\nu^{'}_j\|_{\infty} \|\nabla \be^{n}_{j,0}\|^2 + \frac{1}{2} \|\nu^{'}_j\|_{\infty} \|\nabla \be^{n-1}_{j,0}\|^2. \label{NSE-error-14-n}
		\end{align}
		Rewrite the time-derivative term\begin{align}
			&\frac{1}{2 \Delta t} \left(3 \be^{n+1}_j - 4\tilde{\be}^n_{j} + \tilde{\be}^{n-1}_{j}, \be^{n+1}_{j,0} \right)\nonumber\\&=\frac{1}{2 \Delta t} \left(3 \be^{n+1}_j - 4\tilde{\be}^n_{j} + \tilde{\be}^{n-1}_{j}, \be^{n+1}_{j} \right)- \frac{1}{2 \Delta t}\left(3 \be^{n+1}_j - 4\tilde{\be}^n_{j} + \tilde{\be}^{n-1}_{j}, \be^{n+1}_{j,\bR} \right).\label{rewrite-time}
		\end{align}
		Use the Cauchy-Schwarz, Poincar\'e's, triangle, $\|\tilde{\be}_j^{n}\|\le\|\be_j^{n}\|$ from \eqref{tilde-bd1} and Young's inequalities
		\begin{align}
			\frac{1}{2 \Delta t} \Big(3 \be^{n+1}_j &- 4\tilde{\be}^n_{j} + \tilde{\be}^{n-1}_{j}, \be^{n+1}_{j,\bR} \Big) \leq \frac{C}{2\Delta t}\|3\be^{n+1}_{j}-4\tilde{\be}^{n}_{j}+\tilde{\be}^{n-1}_{j}\|\|\nabla\be^{n+1}_{j,\bR}\|\nonumber \\
			& \leq \frac{C}{2\Delta t}\Big(3\|\be^{n+1}_{j}-2\tilde{\be}^{n}_{j}+\tilde{\be}^{n-1}_{j}\|+2\|\tilde{\be}^{n}_{j}-\tilde{\be}^{n-1}_{j}\|\Big)\|\nabla\be^{n+1}_{j,\bR}\| \nonumber \\
			& \leq \frac{1}{4\Delta t}\|\be^{n+1}_{j}-2\tilde{\be}^{n}_{j}+\tilde{\be}^{n-1}_{j}\|^2+\frac{C}{\Delta t}\|\nabla\be^{n+1}_{j,\bR}\|^2+\|\be^{n}_{j}\|^2+\|\be^{n-1}_{j}\|^2+ \frac{C}{\Delta t ^2}\|\nabla\be^{n+1}_{j,\bR}\|^2. \label{time-ber}
		\end{align}
		Using \eqref{rewrite-time}-\eqref{time-ber} and identity in \eqref{BDF-2-identity}  into \eqref{NSE-error-14-n}, dropping nonnegative term from left-hand-side and reducing, gives
		\begin{align}
			&\frac{1}{4 \Delta t} \left( \|\be^{n+1}_{j}\|^2 + \|2\be^{n+1}_{j} - \tilde{\be}^{n}_{j}\|^2 - \|\tilde{\be}^{n}_{j}\|^2 - \|2\tilde{\be}^{n}_{j} - \tilde{\be}^{n-1}_{j}\|^2 \right)+\frac{\bar{\nu}_{\min}}{2}\|\nabla\be^{n+1}_{j,0}\|^2 \nonumber \\ &\le\frac{C}{\Delta t} \left(\frac{1}{\alpha_{\min}\zeta}+1+\frac{1}{\Delta t}+\Delta t^2\right)\|\nabla\be^{n+1}_{j,\bR}\|^2+ \frac{C}{\alpha_{\min}}\left(\|\hspace{-1mm}<\hspace{-1mm}{\be}\hspace{-1mm}>^n\hspace{-1mm}\|^2+\| \be^{'n}_{j}\|^2+\frac{\Delta t}{h^3\alpha_{\min}}\sum_{i=1}^J\|\be_i^{'n}\|^2  \right)\nonumber\\&+\left(\frac{C}{\Delta t} + \frac{C}{\alpha_{\min}} \|\nabla \cdot \bhu^{'n}_{j,h}\|^{2}_{L^\infty}+1\right) \left(\|\be^{n}_{j} \|^{2} +  \|\be^{n-1}_{j} \|^{2}\right) +\|\nu^{'}_j\|_{\infty} \|\nabla \be^{n}_{j,0}\|^2 + \frac{1}{2} \|\nu^{'}_j\|_{\infty} \|\nabla \be^{n-1}_{j,0}\|^2.
		\end{align}
		Use \eqref{tilde-bd2} and \eqref{eprime-bd}, and rearranging
		\begin{align}
			&\frac{1}{4 \Delta t} \left( \|\be^{n+1}_{j}\|^2 + \|2\be^{n+1}_{j} - \tilde{\be}^{n}_{j}\|^2 - \|\be^{n}_{j}\|^2 - \|2\be^{n}_{j} - \tilde{\be}^{n-1}_{j}\|^2 \right)+\frac{\bar{\nu}_{\min}}{2}\left(\|\nabla\be^{n+1}_{j,0}\|^2-\|\nabla \be^{n}_{j,0}\|^2\right) \nonumber \\ &+\left(\frac{\bar{\nu}_{\min}}{2}-\|\nu^{'}_j\|_{\infty}\right)\left(\|\nabla \be^{n}_{j,0}\|^2-\|\nabla \be^{n-1}_{j,0}\|^2\right)+\frac{\alpha_{\min}}{2}\|\nabla \be^{n-1}_{j,0}\|^2\nonumber \\ &\le\frac{C}{\Delta t} \left( \frac{1}{\alpha_{\min}\zeta}+1+\frac{1}{\Delta t}+\Delta t^2\right)\|\nabla\be^{n+1}_{j,\bR}\|^2+\left(\frac{C}{\Delta t} + \frac{C}{\alpha_{\min}} \|\nabla \cdot \bhu^{'n}_{j,h}\|^{2}_{L^\infty}+1\right) \left(\|\be^{n}_{j} \|^{2} +  \|\be^{n-1}_{j} \|^{2}\right)\nonumber\\&+ \frac{C}{\alpha_{\min}}\left\{ \left(\frac{1}{J}+\frac{\Delta t}{h^3\alpha_{\min}}\right) \sum^{J}_{i=1}\left(\|\be^{n}_{i}\|^2+\|\be^{n-1}_{i}\|^2\right)+\|\be^{n}_{j}\|^2+\|\be^{n-1}_{j}\|^2  \right\}.
		\end{align}
		Multiplying both sides by $4\Delta t$, summing over time step  $n=1, \cdots, M-1$, and reducing, we obtain
		\begin{align}
			&\|\be^{M}_{j}\|^2+\|2\be^{M}_{j}-\tilde{\be}^{M-1}_{j}\|^2+2\bar{\nu}_{\min} \Delta t\|\nabla\be^{M}_{j,0}\|^2+4\Delta t\left(\frac{\bar{\nu}_{\min}}{2}-\|\nu^{'}_{j}\|_\infty\right)\|\nabla\be^{M-1}_{j,0}\|^2\nonumber \\
			&  +2\alpha_{\min} \Delta t\sum_{n=1}^{M-1}\|\nabla\be^{n-1}_{j,0}\|^2 \leq \|\be^{1}_{j}\|^2+\|2\be^{1}_{j}-\tilde{\be}^{0}_{j}\|^2+4\Delta t\left(\frac{\bar{\nu}_{\min}}{2}-\|\nu^{'}_{j}\|_\infty\right)\|\nabla\be^{0}_{j,0}\|^2+2\bar{\nu}_{\min} \Delta t\|\nabla\be^{0}_{j,0}\|^2\nonumber \\&+C \left(\frac{1}{\alpha_{\min}\zeta}+1+\frac{1}{\Delta t}+\Delta t^2\right)\sum_{n=1}^{M-1}\|\nabla\be^{n+1}_{j,\bR}\|^2+ \frac{C\Delta t}{\alpha_{\min}} \left(\frac{1}{J}+\frac{\Delta t}{h^3\alpha_{\min}}\right) \sum^{J}_{i=1}\sum_{n=0}^{M-1}\|\be^{n}_{i}\|^2\nonumber\\&+C\Delta t\left(\frac{1}{\alpha_{\min}}+\frac{1}{\Delta t} + \frac{D_{\infty}^2}{\alpha_{\min}}+1\right)\sum_{n=0}^{M-1} \|\be^{n}_{j} \|^{2}.
		\end{align}
		Dropping nonnegative terms from left-hand-side, and reducing yields
		\begin{align}
			&\|\be^{M}_{j}\|^2 +2\alpha_{\min} \Delta t\sum_{n=2}^{M}\|\nabla\be^{n}_{j,0}\|^2 \leq C  \left(\frac{1}{\alpha_{\min}\zeta}+1+\frac{1}{\Delta t}+\Delta t^2\right)\sum_{n=2}^{M}\|\nabla\be^{n}_{j,\bR}\|^2\nonumber \\&+ \frac{C\Delta t}{\alpha_{\min}} \left(\frac{1}{J}+\frac{\Delta t}{h^3\alpha_{\min}}\right) \sum^{J}_{i=1}\sum_{n=2}^{M-1}\|\be^{n}_{i}\|^2+C\Delta t\left(\frac{1}{\alpha_{\min}}+\frac{1}{\Delta t} + \frac{D_{\infty}^2}{\alpha_{\min}}+1\right)\sum_{n=2}^{M-1} \|\be^{n}_{j} \|^{2}.
		\end{align}
		Summing over $j=1, \cdots, J$, and reducing, we have
		\begin{align}
			&\sum_{j=1}^J\|\be^{M}_{j}\|^2 +2\alpha_{\min} \Delta t\sum_{j=1}^J\sum_{n=2}^{M}\|\nabla\be^{n}_{j,0}\|^2\leq C  \left(\frac{1}{\alpha_{\min}\zeta}+1+\frac{1}{\Delta t}+\Delta t^2\right)\sum_{j=1}^J\sum_{n=2}^{M}\|\nabla\be^{n}_{j,\bR}\|^2\nonumber\\
			&+ \frac{C\Delta t}{\alpha_{\min}} \left(\frac{\Delta t}{h^3\alpha_{\min}}+1+\frac{\alpha_{\min}}{\Delta t}+\alpha_{\min}+D_{\infty}^2\right) \sum^{J}_{j=1}\sum_{n=2}^{M-1}\|\be^{n}_{j}\|^2.
		\end{align}
		We now apply the discrete Gr\"onwall inequality given in Lemma \ref{dgl} to obtain
		\begin{align}
			\sum_{j=1}^J\|\be^{M}_{j}\|^2 +2\alpha_{\min} \Delta t\sum_{j=1}^J\sum_{n=2}^{M}\|\nabla\be^{n}_{j,0}\|^2 &\leq C\exp\left\{ \frac{C}{\alpha_{\min}} \left(\frac{\Delta t}{h^3\alpha_{\min}}+1+\frac{\alpha_{\min}}{\Delta t}+\alpha_{\min}+D_{\infty}^2\right)\right\}\nonumber\\&\times \Delta t \left( \frac{1}{\Delta t\alpha_{\min}\zeta}+\frac{1}{\Delta t}+\frac{1}{\Delta t^2}+\Delta t\right)\sum_{j=1}^J\sum_{n=2}^{M}\|\nabla\be^{n}_{j,\bR}\|^2.
		\end{align}
		Using the estimate in \eqref{NSE-error-12-n} and simplifying, we obtain
		\begin{align}
			&\sum_{j=1}^J\|\be^{M}_{j}\|^2 +2\alpha_{\min} \Delta t\sum_{j=1}^J\sum_{n=2}^{M}\|\nabla\be^{n}_{j,0}\|^2\leq \frac{CC_R^2}{\gamma^2}\left( \frac{1}{\Delta t\alpha_{\min}\zeta}+\frac{1}{\Delta t}+\frac{1}{\Delta t^2}+\Delta t\right)\nonumber\\&\times\exp\left\{ \frac{C}{\alpha_{\min}} \left(\frac{\Delta t}{h^3\alpha_{\min}}+1+\frac{\alpha_{\min}}{\Delta t}+\alpha_{\min}+D_{\infty}^2\right)\right\}.
		\end{align}
		Finally, applying the triangle and Young's inequalities to $\|\nabla\hspace{-1mm}\lab\bu_h\rab^n-\nabla\hspace{-1mm}\lab\bhu_h\rab^n\hspace{-1mm}\|^2$ gives the desired velocity estimate.
		
		\textbf{Step 3:} Now we begin the pressure estimate by subtracting \eqref{NSE-error-1} from \eqref{couple-eqn-1-new} yields for all $\bchi_{h} \in \bX_h$
		\begin{align}
			&\frac{1}{2 \Delta t} \left(3 \be^{n+1}_j - 4\be^n_{j} + \be^{n-1}_{j}, \bchi_{h} \right) + b^* \left(<{\bhu}_h>^n, \be^{n+1}_j , \bchi_{h} \right) + b^* \left(<{\be}>^n, \bu^{n+1}_{j,h}, \bchi_{h}\right)  \nonumber \\ &+ \left(\bar{\nu}\nabla \be^{n+1}_{j}, \nabla \bchi_{h}\right) + \gamma \left(\nabla \cdot \be^{n+1}_{j,\bR}, \nabla \cdot\bchi_{h}\right) - \left(p^{n+1}_{j,h} - \frac{4}{3}\hat{p}^n_{j,h} + \frac{1}{3}\hat{p}^{n-1}_{j,h}, \nabla \cdot 
			\bchi_{h} \right)  \nonumber \\ &+ \left(\nu_T(\hat{u}_{h}^{'},t_n)\nabla \be_j^{n+1},\nabla\bchi_h\right)+ \mu\Delta t\left(\big\{(\mathcal{L}^n(u^{'}_{h}))^2-(\mathcal{L}^n(\hat{u}^{'}_{h}))^2\big\}\nabla\bu_{j,h}^{n+1},\nabla\bchi_h\right) \nonumber \\ &= - b^* \left(\bhu^{'n}_{j,h} , 2\be^{n}_{j} - \be^{n-1}_{j} , \bchi_{h}\right) - b^* \left(\be^{'n}_{j} , 2\bu^{n}_{j,h} - \bu^{n-1}_{j,h} , \bchi_{h}\right) - \left(\nu^{'}_j \nabla (2\be^n_{j} - \be^{n-1}_{j}) , \nabla \bchi_{h}\right).\label{p1}
		\end{align}
		Using $\nabla \cdot \be_{j,\bR} = \nabla \cdot \be_{j} = - \nabla \cdot \bhu_{j,h}$, we rewrite \eqref{p1} as
		\begin{align}
			&\left(p^{n+1}_{j,h} -\left( \frac{4}{3}\hat{p}^n_{j,h} - \frac{1}{3}\hat{p}^{n-1}_{j,h}-\gamma\nabla \cdot \bhu^{n+1}_{j,h}\right), \nabla \cdot 
			\bchi_{h} \right)=\frac{1}{2 \Delta t} \left(3 \be^{n+1}_j - 4\be^n_{j} + \be^{n-1}_{j}, \bchi_{h}\right)\nonumber \\&+ b^* \left(<{\bhu}_h>^n, \be^{n+1}_j, \bchi_{h} \right) + b^* \left(<{\be}>^n, \bu^{n+1}_{j,h}, \bchi_{h}\right) + \left(\bar{\nu}\nabla \be^{n+1}_{j}, \nabla \bchi_{h}\right) \nonumber \\& + \left(\nu_T(\hat{u}_{h}^{'},t_n)\nabla \be_j^{n+1},\nabla\bchi_{h}\right)+ \mu\Delta t\left(\big\{(\mathcal{L}^n(u^{'}_{h}))^2-(\mathcal{L}^n(\hat{u}^{'}_{h}))^2\big\}\nabla\bu_{j,h}^{n+1},\nabla\bchi_{h}\right) \nonumber \\&+ b^* \left(\bhu^{'n}_{j,h}, 2\be^{n}_{j} - \be^{n-1}_{j}, \bchi_{h}\right) + b^* \left(\be^{'n}_{j}, 2\bu^{n}_{j,h} - \bu^{n-1}_{j,h}, \bchi_{h}\right) + \left(\nu^{'}_j \nabla (2\be^n_{j} - \be^{n-1}_{j}), \nabla \bchi_{h}\right).\label{p2}
		\end{align} 
		Using Cauchy-Schwarz, triangle and  Poincar\'e inequalities
		\begin{align}
			\frac{1}{2 \Delta t} \left(3 \be^{n+1}_j - 4\be^n_{j} + \be^{n-1}_{j}, \bchi_{h}\right)&\le \frac{1}{2 \Delta t}\left(3\|\be^{n+1}_j\|+4\|\be^{n}_j\|+\|\be^{n-1}_j\|\right)\|\bchi_h\|\nonumber\\& \le\frac{C}{\Delta t}\left(\|\nabla\be^{n+1}_j\|+\|\nabla\be^{n}_j\|+\|\nabla\be^{n-1}_j\|\right)\|\nabla\bchi_h\|.\label{step3-cauchy}
		\end{align}
		Using the bound in \eqref{nonlinearbound1} and stability estimate, gives
		\begin{align}
			b^* \left(<{\bhu}_h>^n, \be^{n+1}_j, \bchi_{h} \right)&\le C \|<{\bhu}_h>^n\|^{\frac12}\|\nabla <{\bhu}_h>^n\|^{\frac12}\|\nabla\be^{n+1}_j\|\|\nabla \bchi_{h}\|\nonumber\\&\le C\alpha_{\min}^{-\frac12}\zeta^{-\frac14}\Delta t^{-\frac14}\|\nabla\be^{n+1}_j\|\|\nabla \bchi_{h}\|.
		\end{align}
		Using the bound in \eqref{nonlinearbound3}, Lemma \ref{lemma-L3-infty}, triangle and Poincar\'e inequalities, yields
		\begin{align}
			b^* \left(<{\be}>^n, \bu^{n+1}_{j,h}, \bchi_{h}\right)&\le C\|<{\be}>^n\|\left(\|\nabla  \bu^{n+1}_{j,h}\|_{L^3}+\| \bu^{n+1}_{j,h}\|_{L^\infty}\right)\|\nabla \bchi_{h}\|\nonumber\\&\le C\|<{\be}>^n\|\|\nabla \bchi_{h}\|\nonumber\\&\le C\|\nabla \bchi_{h}\|\sum_{i=1}^J\left( \|\nabla\be^{n}_{i}\|+\|\nabla\be^{n-1}_{i}\|\right).
		\end{align}
		Applying the H\"older's inequality, yields\begin{align}
			\left(\bar{\nu}\nabla \be^{n+1}_{j}, \nabla \bchi_{h}\right)\le\|\bar{\nu}\|_{\infty}\|\nabla \be^{n+1}_{j}\|\|\nabla \bchi_{h}\|.
		\end{align}
		Using the H\"older's and triangle inequalities, we obtain
		\begin{align}
			\left(\nu^{'}_j \nabla (2\be^n_{j} - \be^{n-1}_{j}), \nabla \bchi_{h}\right)\le\|\nu_{j}^{'}\|_{\infty}\left(\|\nabla \be^n_{j}\| +\nabla\|\be^{n-1}_{j}\|\right)\|\nabla\bchi_h\|.
		\end{align}
		Apply the H\"older's inequality and \eqref{uniform-boundedness-lemma-proof}, gives
		\begin{align}
			\left(\nu_T(\hat{u}_{h}^{'},t_n)\nabla \be_j^{n+1},\nabla\bchi_{h}\right)\le C\|\nabla \be_j^{n+1}\|\|\nabla\bchi_{h}\|.
		\end{align}
		Apply the H\"older's inequality,  \eqref{prandtl-diff}, stability estimate of Algorithm \ref{coupled-alg-com} in \eqref{stability-bdf2-statement}, triangle and Poincar\'e inequalities, yields
		\begin{align}
			\mu\Delta t\Big(\big\{(\mathcal{L}^n(u^{'}_{h}))^2-(\mathcal{L}^n(\hat{u}^{'}_{h}))^2\big\}\nabla\bu_{j,h}^{n+1},&\nabla\bchi_h\Big)
			\le C\alpha_{\min}^{-\frac12}h^{-\frac32}\Delta t^{\frac12}\sum_{i=1}^J \|\be^{'n}_{i}\|\|\nabla\bchi_{h}\|\nonumber\\
			&\le C\alpha_{\min}^{-\frac12}h^{-\frac32}\Delta t^{\frac12}\|\|\nabla\bchi_{h}\|\sum_{i=1}^J\left( \|\nabla\be^{n}_{i}\|+\|\nabla\be^{n-1}_{i}\|\right).
		\end{align}
		Using \eqref{nonlinearbound}, triangle inequality and stability estimate in \eqref{stability-penalty}, gives
		\begin{align}
			b^* \left(\bhu^{'n}_{j,h}, 2\be^{n}_{j} - \be^{n-1}_{j}, \bchi_{h}\right)
			\le C\zeta^{-\frac12}\Delta t^{-\frac12}\left(\|\nabla \be^{n}_{j} \|+\|\nabla \be^{n-1}_{j}\|\right)\|\nabla\bchi_h\|.
		\end{align}
		Applying the nonlinear bound in \eqref{nonlinearbound3}, the triangle inequality, Lemma \ref{lemma-L3-infty}, and the Poincar\'e inequality, we have
		\begin{align}
			-b^* \left(\be^{'n}_{j} , 2\bu^{n}_{j,h} - \bu^{n-1}_{j,h} , \bchi_{h}\right) 
			& \leq C\|\be^{'n}_{j}\|\|\nabla \bchi_{h}\|\le C\|\nabla\bchi_h\|\sum_{i=1}^J\left( \|\nabla\be^{n}_{i}\|+\|\nabla\be^{n-1}_{i}\|\right). \label{step-3-nonbd}
		\end{align}
		Substituting the bounds in \eqref{step3-cauchy}-\eqref{step-3-nonbd} in \eqref{p2} and using the triangle and Poincar\'e inequalities yields
		\begin{align}
			&\frac{	\left(p^{n+1}_{j,h} -\left( \frac{4}{3}\hat{p}^n_{j,h} - \frac{1}{3}\hat{p}^{n-1}_{j,h}-\gamma\nabla \cdot \bhu^{n+1}_{j,h}\right), \nabla \cdot 
				\bchi_{h} \right)}{\|\nabla\bchi_h\|} \nonumber\\&\le C\left(\alpha_{\min}^{-\frac12}\zeta^{-\frac14}\Delta t^{-\frac14}+\|\bar{\nu}\|_{\infty}+\Delta t+\Delta t^{-1}\right)\|\nabla \be_j^{n+1}\|+C\left(1+\alpha_{\min}^{-\frac12}h^{-\frac32}\Delta t^{\frac12}\right)\sum_{i=1}^J \left(\|\nabla\be_i^{n}\|+\|\nabla\be_i^{n-1}\|\right)\nonumber\\&+C\left(\zeta^{-\frac12}\Delta t^{-\frac12}+1+\|\nu_{j}^{'}\|_{\infty}+\Delta t^{-1}\right)\left(\|\nabla\be^{n}_{j}\| +\|\nabla \be^{n-1}_{j}\|\right).
		\end{align}
		Applying the \textit{inf-sup} condition gives
		\begin{align}
			\beta\Big\|p^{n+1}_{j,h} -&\left( \frac{4}{3}\hat{p}^n_{j,h} - \frac{1}{3}\hat{p}^{n-1}_{j,h}-\gamma\nabla \cdot \bhu^{n+1}_{j,h}\right)\Big\|\le C\left(\alpha_{\min}^{-\frac12}\zeta^{-\frac14}\Delta t^{-\frac14}+\|\bar{\nu}\|_{\infty}+\Delta t+\Delta t^{-1}\right)\|\nabla \be_j^{n+1}\|\nonumber\\&+C\left(1+\alpha_{\min}^{-\frac12}h^{-\frac32}\Delta t^{\frac12}\right)\sum_{i=1}^J \left(\|\nabla\be_i^{n}\|+\|\nabla\be_i^{n-1}\|\right)\nonumber\\&+C\left(\zeta^{-\frac12}\Delta t^{-\frac12}+1+\|\nu_{j}^{'}\|_{\infty}+\Delta t^{-1}\right)\left(\|\nabla\be^{n}_{j}\| +\|\nabla \be^{n-1}_{j}\|\right) .
		\end{align}
		Squaring both sides, multiplying both sides by $\Delta t$, summing over time step $n=1, 2, \cdots, M-1$, and reducing yields
		\begin{align}
			\Delta t\sum_{n=1}^{M-1}\Big\|p^{n+1}_{j,h} -&\left( \frac{4}{3}\hat{p}^n_{j,h} - \frac{1}{3}\hat{p}^{n-1}_{j,h}-\gamma\nabla \cdot \bhu^{n+1}_{j,h}\right)\Big\|^2\nonumber\\&\le C\left(\alpha_{\min}^{-1}\zeta^{-\frac12}\Delta t^{-\frac12}+\|\bar{\nu}\|_{\infty}^2+\Delta t^2+\Delta t^{-2}\right)\Delta t\sum_{n=1}^{M-1}\|\nabla \be_j^{n+1}\|^2\nonumber\\&+C\left(1+\alpha_{\min}^{-1}h^{-3}\Delta t\right)\Delta t\sum_{n=1}^{M-1}\sum_{i=1}^J \left(\|\nabla\be_i^{n}\|^2+\|\nabla\be_i^{n-1}\|^2\right)\nonumber\\&+C\left(\zeta^{-1}\Delta t^{-1}+1+\|\nu_{j}^{'}\|_{\infty}^2+\Delta t^{-2}\right)\Delta t\sum_{n=1}^{M-1}\left(\|\nabla\be^{n}_{j}\|^2 +\|\nabla \be^{n-1}_{j}\|^2\right). 
		\end{align}
		Summing over $j=1,2,\cdots,J$ and simplifying yields
		\begin{align}
			&\Delta t\sum_{j=1}^{J}\sum_{n=1}^{M-1}\Big\|p^{n+1}_{j,h} -\left( \frac{4}{3}\hat{p}^n_{j,h} - \frac{1}{3}\hat{p}^{n-1}_{j,h}-\gamma\nabla \cdot \bhu^{n+1}_{j,h}\right)\Big\|^2\le C\Delta t\sum_{j=1}^J\sum_{n=1}^{M}\|\nabla\be^{n}_{j}\|^2\nonumber\\&
			\times\left(\zeta^{-1}\Delta t^{-1}+1+\max_{1\le j\le J}\|\nu_{j}^{'}\|_{\infty}^2+\Delta t^{-2}+\alpha_{\min}^{-1}h^{-3}\Delta t+\Delta t^2+\alpha_{\min}^{-1}\zeta^{-\frac12}\Delta t^{-\frac12}+\|\bar{\nu}\|_{\infty}^2\right).
		\end{align}
		Substituting the velocity error estimate completes the proof.
	\end{proof}

Therefore, \begin{align}
		\|<\bu>-<\bhu_h>\|_{2,1}\le C(h^k+\Delta t^2)+\frac{C(h,\Delta t)}{\gamma}.
	\end{align}
	\begin{remark}
		If $\gamma$ is chosen sufficiently large so that the penalty-projection error is of order $O(h^k+\Delta t^2)$, then Algorithm \ref{NSE-FEM} inherits the second-order temporal and optimal spatial accuracy of Algorithm \ref{coupled-alg-com}.
	\end{remark}
	\section{Numerical experiments}\label{numerical-experiment} In this section, we present a series of numerical tests to validate the predicted convergence rates and demonstrate the performance of the projection method in Algorithm \ref{NSE-FEM} on various two- and three-dimensional benchmark problems.  We write the spatial variable $\bx=(x_1,x_2)$ for 2D problems and $\bx=(x_1,x_2,x_3)$ for 3D problems. All simulations are conducted using the deal.II platform \cite{dealII93}, a C++ finite element library. The computations use the $(\mathbb{Q}_2^d,\mathbb{Q}_1)$ TH element, the direct solver UMFPACK \cite{davis2004algorithm}, and $\mu=1$ unless otherwise stated.
	
	\subsection{Verification of Convergence Rates}
	In the validation experiments, we verify the theoretically derived convergence rates using the following 2D analytical solution:
	\[ {\bu}=\left(\begin{array}{c} \cos x_2+(1+e^t)\sin x_2 \\ \sin x_1+(1+e^t)\cos x_1 \end{array} \right)\;\;\text{and}\;\; \ p =\sin(x_1+x_2)(1+e^t),
	\]
	on the domain $\cD = [0,1]^2$. Next, we introduce noise as follows: $\bu_j = (1 + k_j \epsilon) \bu$ and $p_j = (1 + k_j \epsilon) p$, where $\epsilon$ is the perturbation parameter and $k_j := (-1)^{j+1} 4\lceil j/2 \rceil / J$, with $j = 1, 2, \dots, J$, and $J=20$ unless stated otherwise. The analytical solution is clearly divergence-free. We examine two values of $\epsilon$: $10^{-3}$ and $10^{-2}$, which introduce noise into the initial and boundary conditions, as well as the forcing term. The forcing term $\bif_j$ is computed using the synthetic data in \eqref{gov1}. We model the viscosity $\nu$ as a continuous uniform random variable and consider three random samples of size $J$ with $\bE[\nu] = 10^{-3}, 10^{-4}$, and $10^{-5}$, where the samples are generated with a 10$\%$ variation from the mean. The boundary conditions are prescribed by $\bu_{j,h}|{\partial\cD} = \bu_j$, and the initial conditions are given by $\bu_{j,h}^0 = \bu_j(0,\bx)$ and $\bu_{j,h}^1 = \bu_j(\Delta t,\bx)$. The mean-velocity error is defined as $<\hspace{-1mm}\be\hspace{-1mm}> := <\hspace{-1mm}\bu\hspace{-1mm}> - <\hspace{-1mm}\bu_h\hspace{-1mm}>$ and $\|\hspace{-1mm}<\hspace{-1mm}\be\hspace{-1mm}>\hspace{-1mm}\|_{2,1}:=\sqrt{\Delta t\sum\limits_{n=2}^M\|\nabla<\hspace{-1mm}\be\hspace{-1mm}>^n\|^2}$. Structured quadrilateral meshes are considered.
	
	\subsubsection{Convergence of the Proposed BDF-2-EEV-SPP Scheme to the  BDF-2-EEV-Coupled Scheme as $\gamma \to \infty$} In this section, we compute the rate at which the proposed penalty-projection method in Algorithm \ref{NSE-FEM} converges to the coupled method in Algorithm \ref{coupled-alg-com}. The error is defined as $<\hspace{-1mm}\hat{\be}\hspace{-1mm}> := <\hspace{-1mm}\bu_{h}\hspace{-1mm}> - <\hspace{-1mm}\bhu_{h}\hspace{-1mm}>$, which represents the difference between the results obtained from the BDF-2-EEV-Coupled and BDF-2-EEV-SPP schemes. To this end, we set the end time $T = 1$, the time-step size $\Delta t = T/10$, $\epsilon = 0.001$, and $h = 1/32$. We first set $\gamma = 0$ and then increase $\gamma$, beginning with $10^{-2}$, by successive factors of 10, record the velocity errors and compute the corresponding convergence rates. These rates are summarized in Table \ref{conv-table}. We observe that, as $\gamma$ increases, the convergence rates asymptotically approach 1, which aligns well with the theoretically predicted convergence rates in terms of $\gamma$ given in \eqref{gamma-theorem}.
	
	\begin{table}[h!]
		\centering
		\begin{tabular}{|c|c|c|c|c|c|c|}
			\hline
			& \multicolumn{6}{c|}{Fixed $T = 1$, $\Delta t = T/10$, $h = 1/32$} \\
			\hline
			$\epsilon = 10^{-3}$
			& \multicolumn{2}{c|}{$\bE[\nu] = 10^{-3}$} & \multicolumn{2}{c|}{$\bE[\nu] = 10^{-4}$} & \multicolumn{2}{c|}{$\bE[\nu] = 10^{-5}$} \\
			\hline
			$\gamma$ & $\|\hspace{-1mm}< \hspace{-1mm}\hat{\be} \hspace{-1mm}>\hspace{-1mm} \|_{2,1}$ & rate & $\|\hspace{-1mm}<\hspace{-1mm} \hat{\be} \hspace{-1mm}> \hspace{-1mm}\|_{2,1}$ & rate & $\|\hspace{-1mm}<\hspace{-1mm} \hat{\be}\hspace{-1mm} > \hspace{-1mm}\|_{2,1}$ & rate \\
			\hline
			$0$   & 7.1830e-0 & --   & 4.5918e+1 & --   & 1.4696e+2 & --   \\
			\hline
			1e-2   & 4.0926e-0 & -- & 1.1894e+1 & -- &  2.0956e+1 & -- \\
			\hline
			1e-1   & 2.9609e-0 & 0.14 & 6.2126e-0 & 0.28 & 7.5358e-0 & 0.44 \\
			\hline
			1e+0  & 1.1014e-0 & 0.43 & 1.7004e-0 & 0.56 & 1.8802e-0 & 0.60 \\
			\hline
			1e+1  & 1.8862e-1 & 0.77 & 2.1280e-1 & 0.90 & 2.1632e-1 & 0.94 \\
			\hline
			1e+2  & 2.0069e-2 & 0.97 & 2.4048e-2 & 0.95 & 2.4932e-2 & 0.94 \\
			\hline
		\end{tabular}
		\caption{The BDF-2-EEV-SPP scheme converges to the BDF-2-EEV-Coupled scheme as $\gamma$ increases.} \label{conv-table}
	\end{table}
	\subsubsection{Spatial Convergence} To examine spatial convergence, we keep the temporal error sufficiently small by setting a very short simulation end time of $T = 0.001$. We consider $\epsilon = 10^{-2}$. The mesh width $h$ is successively refined by a factor of 1/2, and simulations are run for each refinement. The errors and convergence rates are recorded in Table \ref{sp-table1} for the BDF-2-EEV-SPP scheme. We observe second-order spatial convergence rates for our scheme in all three random-viscosity samples. These results demonstrate optimal second-order spatial convergence for the $(\mathbb{Q}_2^2, \mathbb{Q}_1)$ element and are consistent with the theoretically predicted spatial accuracy.

	\begin{table}[h!]
		\centering
		\begin{tabular}{|c|c|c|c|c|c|c|}
			\hline
			& \multicolumn{6}{c|}{Spatial convergence (fixed $T = 0.001$, $\Delta t = T/8$)} \\
			\hline
			$\epsilon = 10^{-2}$
			& \multicolumn{2}{c|}{$\bE[\nu] = 10^{-3}$} & \multicolumn{2}{c|}{$\bE[\nu] = 10^{-4}$} & \multicolumn{2}{c|}{$\bE[\nu] = 10^{-5}$} \\
			\hline
			$h$ & $\|\hspace{-1mm}<\hspace{-1mm} \be\hspace{-1mm} >\hspace{-1mm} \|{2,1}$ & rate & $\|\hspace{-1mm}<\hspace{-1mm} \be\hspace{-1mm} >\hspace{-1mm} \|{2,1}$ & rate & $\|\hspace{-1mm}<\hspace{-1mm} \be\hspace{-1mm} >\hspace{-1mm} \|_{2,1}$ & rate \\
			\hline
			$1/2$  & 3.9419e-4 & --  & 3.9419e-4 & --   & 3.9419e-4 & -- \\
			\hline
			$1/4$  & 1.0076e-4 & 1.97 & 1.0076e-4 & 1.97 & 1.0076e-4 & 1.97 \\
			\hline
			$1/8$  & 2.5326e-5 & 1.99 & 2.5326e-5 & 1.99 & 2.5326e-5 & 1.99 \\
			\hline
			$1/16$  & 6.3402e-6 & 2.00 & 6.3399e-6 & 2.00 & 6.3398e-6 & 2.00 \\
			\hline
			$1/32$  & 1.5887e-6 & 2.00 & 1.5869e-6 & 2.00 & 1.5863e-6 & 2.00 \\
			\hline
		\end{tabular}
		\caption{Spatial errors and convergence rates of the BDF-2-EEV-SPP scheme for $\bhu$ with $\gamma = $ 5e+5, 7.5e+5, and 1e+6 respectively.}\label{sp-table1}
	\end{table}

	\subsubsection{Temporal Convergence} To assess temporal convergence, we fix the mesh size at $h = 1/64$ and set the simulation end time to $T = 1$. The simulations are run with various time-step sizes $\Delta t$, beginning with $T/2$. The time-step size is then successively reduced by a factor of 1/2. The errors are recorded, the convergence rates are computed, and the results are presented in Table \ref{temp-table1}. The results demonstrate a second-order temporal convergence rate for the BDF-2-EEV-SPP scheme, consistent with the theoretical prediction.\vspace{-3mm}  
	
	\begin{table}[h!]
		\centering
		\begin{tabular}{|c|c|c|c|c|c|c|}
			\hline
			& \multicolumn{6}{c|}{Temporal convergence (fixed $T = 1$, $h = 1/64$)} \\
			\hline
			$\epsilon = 10^{-3}$
			& \multicolumn{2}{c|}{$\bE[\nu] = 10^{-3}$} & \multicolumn{2}{c|}{$\bE[\nu] = 10^{-4}$} & \multicolumn{2}{c|}{$\bE[\nu] = 10^{-5}$} \\
			\hline
			$\Delta t$ & $\|\hspace{-1mm}<\hspace{-1mm} \be\hspace{-1mm} >\hspace{-1mm} \|_{2,1}$ & rate & $\|\hspace{-1mm}<\hspace{-1mm} \be\hspace{-1mm} > \hspace{-1mm}\|_{2,1}$ & rate & $\|\hspace{-1mm}<\hspace{-1mm} \be\hspace{-1mm} >\hspace{-1mm} \|_{2,1}$ & rate \\
			\hline
			$T/2$   & 2.9730e-1 & --   & 2.9730e-1 & --   & 2.9730e-1 & --   \\ \hline
			$T/4$   & 9.3898e-2 & 1.66 & 9.5448e-2 & 1.64 & 9.5942e-2 & 1.63 \\ \hline
			$T/8$   & 2.5574e-2 & 1.88 & 2.6189e-2 & 1.87 & 2.6399e-2 & 1.86 \\ \hline
			$T/16$  & 6.4461e-3 & 1.99 & 6.6507e-3 & 1.98 & 6.7280e-3 & 1.97 \\ \hline
			$T/32$  & 1.4173e-3 & 2.19 & 1.4915e-3 & 2.16 & 1.5208e-3 & 2.15 \\ \hline
		\end{tabular}
		\caption{Temporal errors and convergence rates of the BDF-2-EEV-SPP scheme for $\bhu$ with $\gamma =$ 1e+5.}
		\label{temp-table1}
	\end{table}
	
	\subsection{Channel Flow over a Unit Square Step}
	This test investigates how uncertainties arising from geometric modifications (e.g., wall bumps), boundary conditions, or turbulence-model parameters propagate through the flow field and affect predictions of the underlying physics and quantities of engineering interest. In this context, aleatory uncertainties represent the inherent variability in inputs such as inlet turbulence conditions, whereas epistemic uncertainties arise from incomplete knowledge of the physical processes represented by turbulence closure models. This benchmark problem serves as a conceptual UQ study because channel flow is a canonical configuration that is widely used to analyze and design a broad range of engineering systems.
	
	We consider a two-dimensional benchmark problem consisting of a $30\times 10$ rectangular channel with a $1\times 1$ step located along the bottom boundary, five units away from the inlet. The following perturbed parabolic flow profile is prescribed as the initial condition and at the inflow and outflow boundaries:\begin{align*}
		\bu_{j,h}(0,\bx)=\bu_{j,h}(t,\bx)|_{\text{inlet},\; \text{outlet}}=\lp 1+k_j\epsilon\rp{{\frac{x_2(10-x_2)}{25}}\choose{0}},
	\end{align*} where $k_j$ is a sample of an independent and identically distributed (i.i.d.) uniform random variable $k\sim U(-1,1)$, forcing $\bif_j=\textbf{0}$  for $j=1,2,\cdots,5$, and $\epsilon=10^{-3}$. Since second-order time-stepping schemes require two initial conditions to start, we use an equivalent first-order-accurate time-stepping scheme \cite{berry2025efficient} for the first time step of the BDF-2-EEV-Coupled scheme. Similarly, to initialize the BDF-2-EEV-SPP scheme, we solve an equivalent first-order-accurate penalty projection EEV scheme \cite{raveendran2026efficient} at the first time step and use the resulting solution as the second initial condition.
	
	In the BDF-2-EEV-Coupled scheme, a no-slip boundary condition is imposed on the domain walls and the step. In contrast, for the BDF-2-EEV-SPP scheme, we impose the no-slip boundary condition on the domain walls and the step only in Step 1, and we weakly enforce the vanishing of the normal velocity component on the boundary in Step 2. To ensure a fair comparison, we keep the following quantities invariant for both schemes. The channel geometry and unstructured quadrilateral meshes are generated using the free software Gmsh \cite{geuzaine2009gmsh}, resulting in a total of 100,835 DoFs for each realization at every time step. We run simulations until $T=40$ using a uniformly distributed random viscosity $\nu$ with $\bE[\nu]=10^{-4}$, $\gamma =$ 1e+4 and $\Delta t=0.1$. The speed contour plots at $t=40$ are visualized in ParaView \cite{ayachit2015paraview} and presented in Fig. \ref{channel-comparison}. The speed contours of the ensemble average in Fig. \ref{channel-comparison}(a) and Fig. \ref{channel-comparison}(b) correspond to the BDF-2-EEV-Coupled and BDF-2-EEV-SPP schemes, respectively, and exhibit similar patterns. 
	
	Define $$\text{Energy }(t_n):=\frac12\int_{\cD}\|\nabla<\bu_h>^n\|^2d\cD.$$  In Fig. \ref{channel-comparison}(c), the Energy vs. Time graph is presented for both the BDF-2-EEV-Coupled and BDF-2-EEV-SPP schemes, showing excellent agreement between the two schemes.
	\begin{figure} [ht]
		\centering	
		\subfloat[]{\includegraphics[width=0.46\textwidth,height=0.15\textwidth]{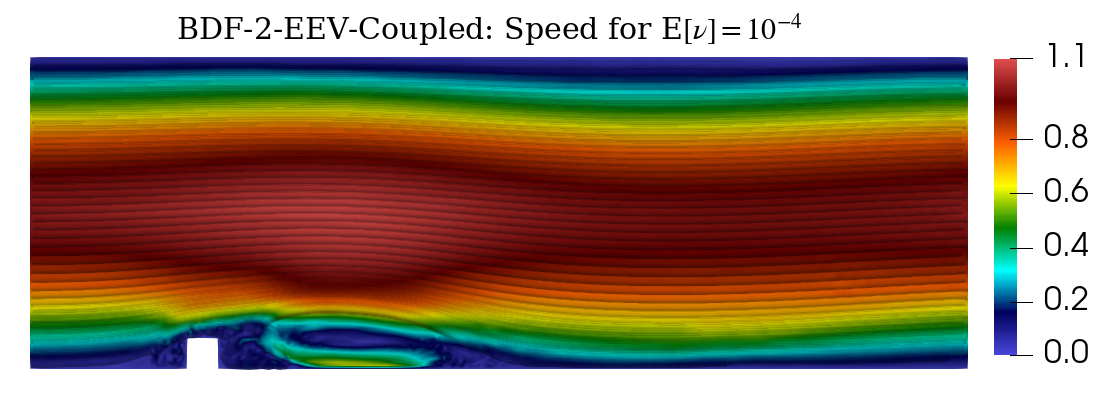}}\hspace{10mm}	
		\subfloat[]{\includegraphics[width=0.46\textwidth,height=0.15\textwidth]{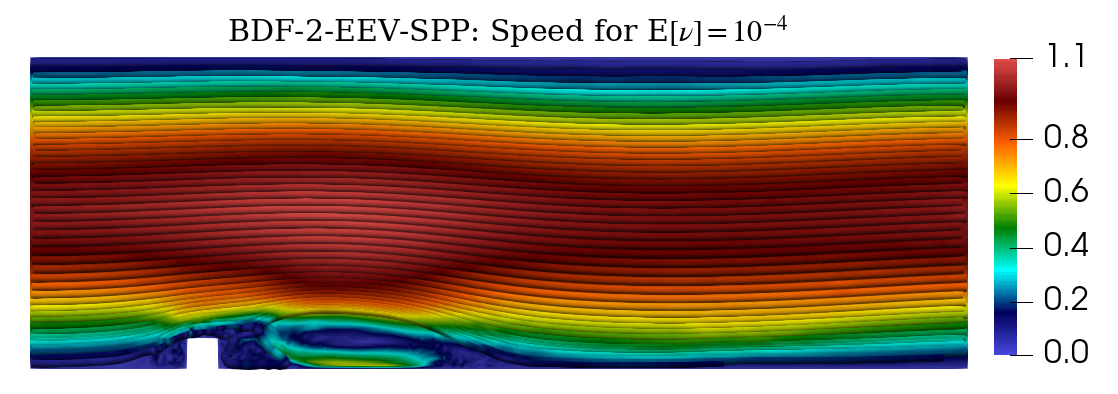}}\vspace{-3ex}
		\subfloat[]{\includegraphics[width=0.45\textwidth]{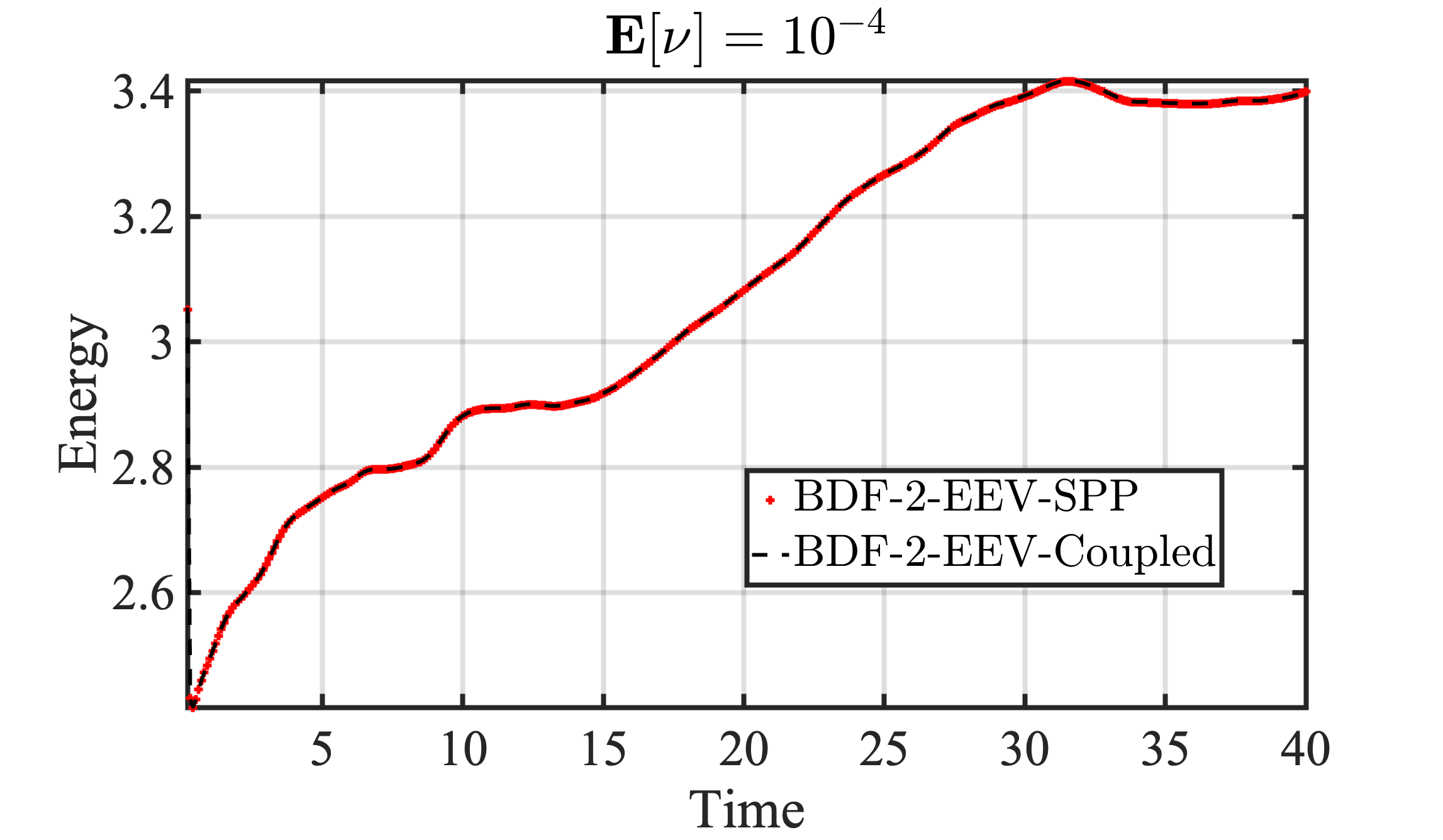}}\vspace{-4ex}\caption{\footnotesize{Flow over a step problem with $\mathbb{E}[\nu]=10^{-4}$: Speed contours of the ensemble-average solution at $t=40$ for (a) the BDF-2-EEV-Coupled scheme and (b) the BDF-2-EEV-SPP scheme. The Energy vs. Time graph in (c) demonstrates excellent agreement between the BDF-2-EEV-Coupled and BDF-2-EEV-SPP schemes.}}\vspace{-0ex}\label{channel-comparison}
	\end{figure}

	\subsubsection{Efficiency in Computational Time}
	To demonstrate the efficiency of the BDF-2-EEV-SPP scheme, we compare the computational time required by the BDF-2-EEV-SPP and BDF-2-EEV-Coupled schemes.  In this experiment, we reduce the Step 2 of Algorithm \ref{NSE-FEM} from a $2 \times 2$ block system to a $1 \times 1$ block system as described in \eqref{poisson}. We consider $T=2$ and $\Delta t=1$. At the first time step, we use a first-order EEV time-stepping efficient backward Euler (BE) coupled scheme and a BE penalty-projection scheme to initialize the BDF-2-EEV-Coupled and BDF-2-EEV-SPP schemes, respectively. We increase the number of DoFs keeping all other parameters and hardware the same for both the schemes. We ran the simulations on a MacBook M3 Max laptop with 64 GB RAM and recorded the wall-clock time in Table \ref{tab:wallclocktime}. The results show that the proposed projection method outperforms the coupled scheme in computational time.

	\begin{table}[htbp]
		\centering
		\begin{tabular}{lccc}
			\hline
			DoFs & 100,835 & 401,251 & 1,600,835  \\
			\hline
			BDF-2-EEV-SPP     & 23.86 s & 103.37 s & 411.45 s\\
			BDF-2-EEV-Coupled & 30.13 s  & 145.30 s & 763.62 s \\
			\hline
		\end{tabular}
		\caption{Wall-clock time (s):  Comparison of the computational times required by the BDF-2-EEV-SPP and BDF-2-EEV-Coupled schemes as the number of DoFs increases.}	\label{tab:wallclocktime}
	\end{table}
	The channel flow over a unit square step incorporates uncertainties associated with adverse pressure gradients. This configuration resembles 3D flow around buildings in urban environments and provides valuable insights into flow models that can inform urban planning and infrastructure design.
	
	\subsection{Channel Flow past a Circular Cylinder} In this experiment, the two-dimensional domain is defined as the intersection of a $2.2\times 0.41$ rectangular channel and the exterior of a circular cylinder with radius $0.05$ centered at $(0.2,0.2)$; see Fig. \ref{cylinder-dom}(a) \cite{john2004reference}. The inflow and outflow profiles are $$\bu_{j,h}(t,0,x_2)=\bu_{j,h}(t,2.2,x_2)=\lp 1+k_j\epsilon\rp\frac{6}{0.41^2}\sin\left(\frac{\pi t}{8}\right)\begin{pmatrix}
		x_2(0.41-x_2)\\0
	\end{pmatrix},\;\; 0\le x_2\le 0.41.$$ No-slip conditions are prescribed on the remaining boundaries for the BDF-2-EEV-Coupled scheme and in Step 1 of the BDF-2-EEV-SPP scheme. In Step 2 of the BDF-2-EEV-SPP scheme, we weakly impose a zero normal velocity component on the boundary. The flow starts from rest with no external forces applied. A level-0 quadrilateral computational mesh generated using Gmsh is presented in Fig. \ref{cylinder-dom}(b). We consider a problem with a total of 132,570 DoFs, using a time-step size $\Delta t = 0.01$ and a stabilization parameter $\gamma =$ 1e+4.
	
	\begin{figure} [ht]
		\centering	
		\subfloat[]{\includegraphics[width=0.4\textwidth,height=0.135\textwidth]{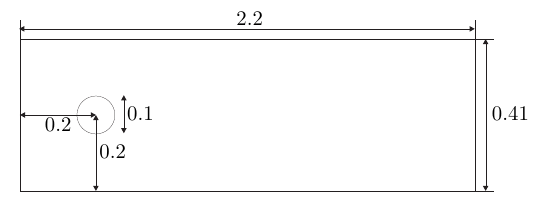}}
		\subfloat[]{\includegraphics[width=0.5\textwidth,height=0.115\textwidth]{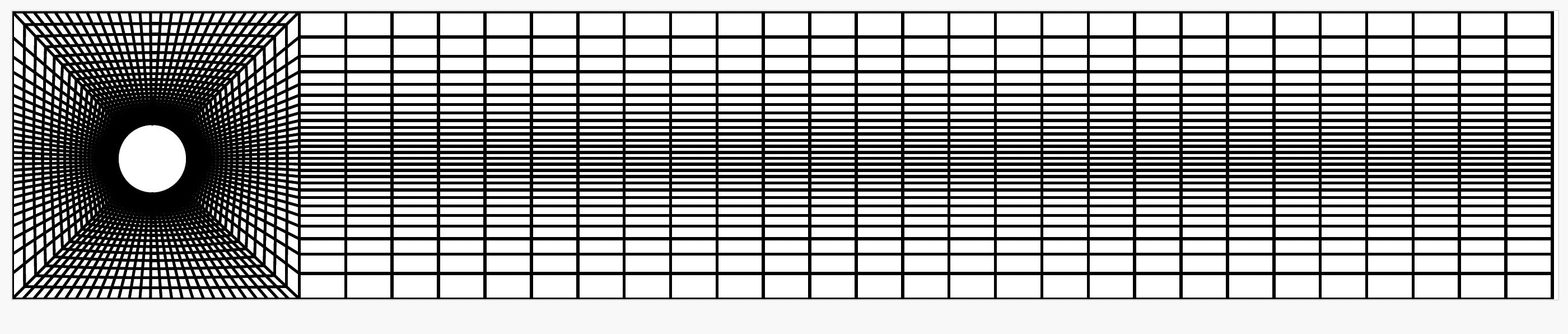}}\vspace{-4ex}
		\caption{\footnotesize{(a) The domain $\cD$ for channel flow around a cylinder and (b) the coarsest level-0 grid.}}\vspace{-0ex}\label{cylinder-dom}
	\end{figure} We performed simulations over the time interval $[0,8]$ using a small perturbation parameter $\epsilon=10^{-3}$ and $J=5$. The simulations use i.i.d. samples $k_j\sim U(-1,1)$, together with $\bE[\nu]=10^{-3},10^{-4},$ and $10^{-5}$. Moreover, Fig. \ref{cyl-validation} shows streamlines superimposed on the speed contours at $t=8$ s for (a) the BDF-2-EEV-Coupled scheme and (b) the BDF-2-EEV-SPP scheme with $\bE[\nu]=10^{-3}$. The two schemes exhibit excellent agreement with each other, and the resulting flow structure is qualitatively consistent with the benchmark result shown in Figure 2 of \cite{john2004reference}. In Fig. \ref{cyl-validation}(c), the Energy vs. Time graph shows that the energy curve from the BDF-2-EEV-SPP scheme is superimposed on the energy curve from the BDF-2-EEV-Coupled scheme.
	\begin{figure} [ht]
		\centering	
		\subfloat[]{\includegraphics[width=0.46\textwidth,height=0.12\textwidth]{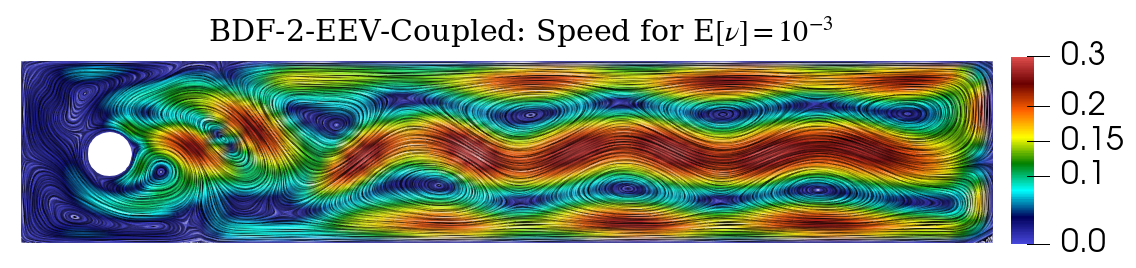}}\hspace{10mm}	
		\subfloat[]{\includegraphics[width=0.46\textwidth,height=0.12\textwidth]{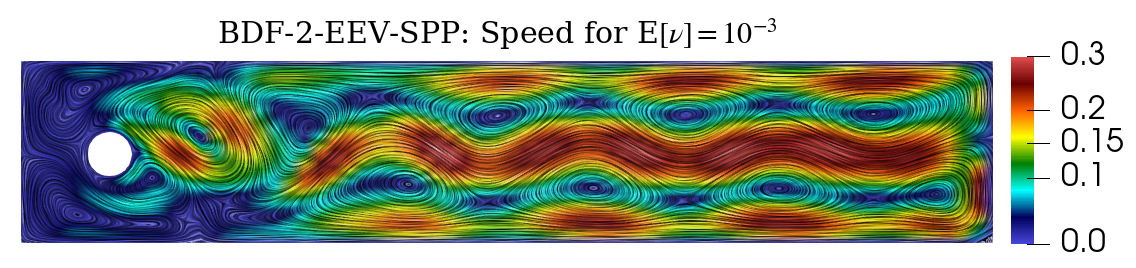}}\vspace{-3ex}
		\subfloat[]{\includegraphics[width=0.35\textwidth]{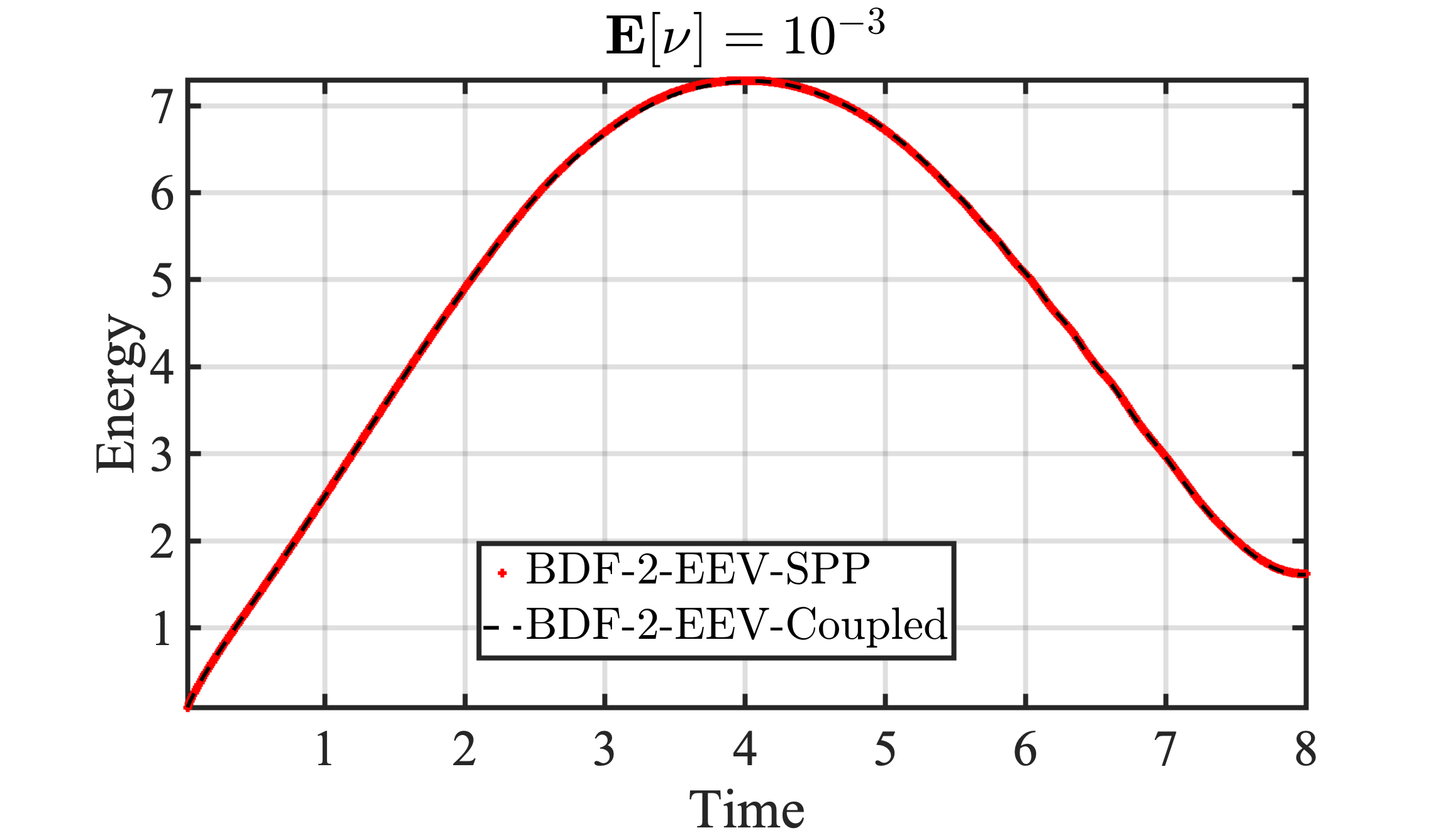}}
		\vspace{-4ex}\caption{\footnotesize{Flow past a circular cylinder: Streamlines over the speed contours for $\bE[\nu]=10^{-3}$ with (a) the BDF-2-EEV-Coupled scheme and (b) the BDF-2-EEV-SPP scheme at $t=8$ s. The Energy-versus-Time plot in (c) exhibits agreement between the BDF-2-EEV-Coupled and BDF-2-EEV-SPP schemes over $[0,8]$.}}\vspace{-0ex}\label{cyl-validation}
	\end{figure}

	Fig. \ref{cyl-speed} shows the speed contours at $t=5.5$ s, with the left column corresponding to the BDF-2-EEV-Coupled scheme and the right column corresponding to the BDF-2-EEV-SPP scheme. For $\bE[\nu]=10^{-3}$, a K\'arm\'an vortex street is clearly observed at the considered resolution. As $\bE[\nu]$ decreases, the large-scale vortical structures break down, and the initially laminar flow transitions to a chaotic and turbulent regime. Due to the limited spatial resolution, the small-scale flow structures are not fully resolved. Nevertheless, both schemes remain stable for small values of $\bE[\nu]$ and produce nearly identical results.
	
	\begin{figure} [ht]
		\centering	
		{\includegraphics[width=0.49\textwidth,height=0.12\textwidth]{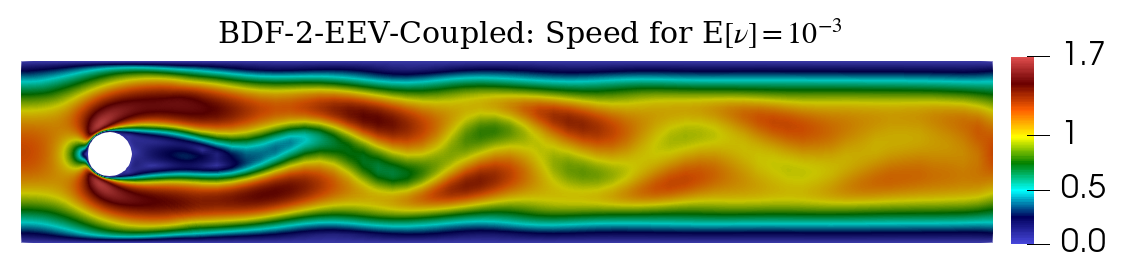}}
		{\includegraphics[width=0.49\textwidth,height=0.12\textwidth]{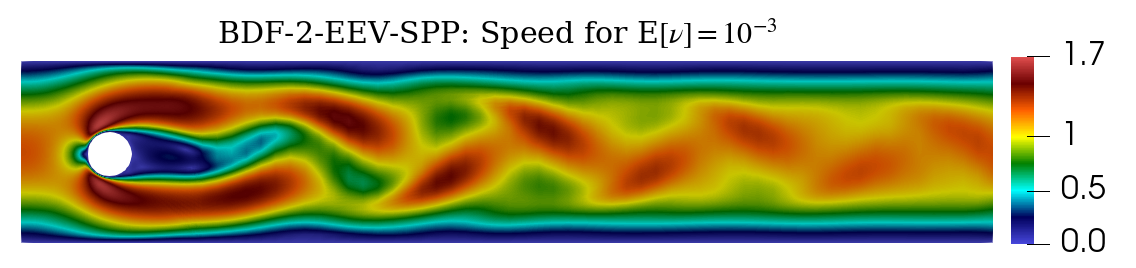}}
		{\includegraphics[width=0.49\textwidth,height=0.12\textwidth]{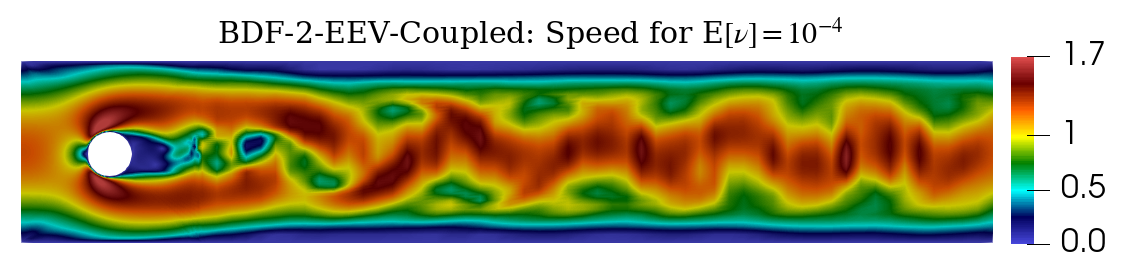}}
		{\includegraphics[width=0.49\textwidth,height=0.12\textwidth]{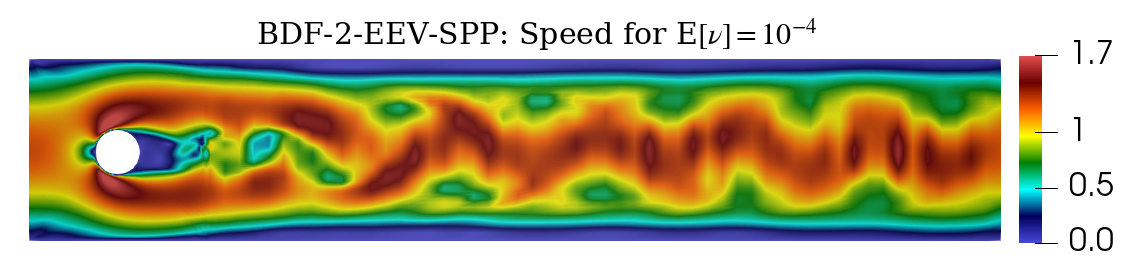}}
		\hspace{-2mm}{\includegraphics[width=0.49\textwidth,height=0.12\textwidth]{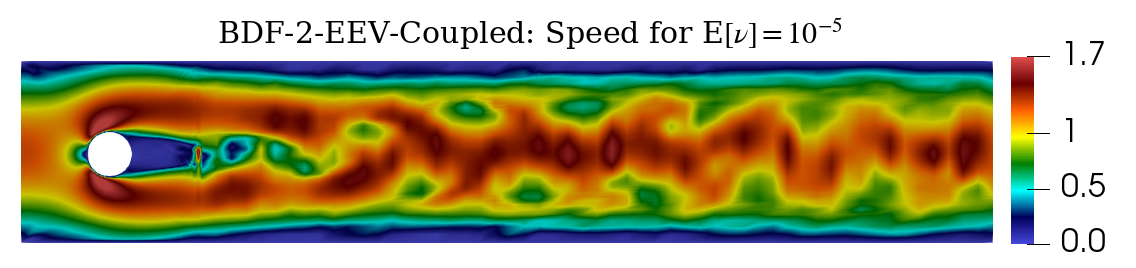}}{\includegraphics[width=0.49\textwidth,height=0.12\textwidth]{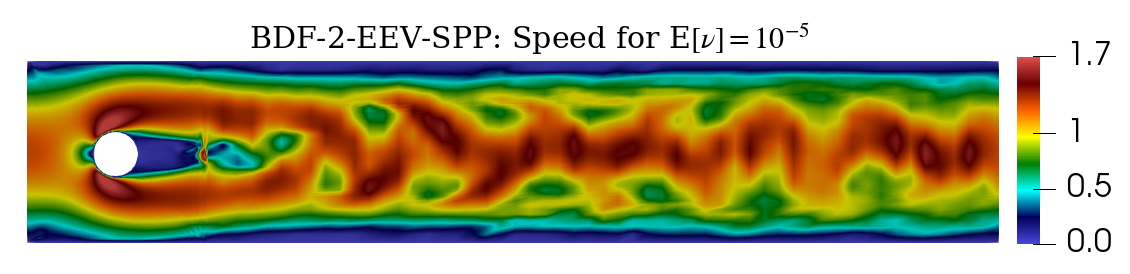}}\vspace{-0ex}
		\caption{\footnotesize{Flow past a circular cylinder: Speed contours at $t=5.5$ s as $\bE[\nu]$ varies for BDF-2-EEV-Coupled (left) and BDF-2-EEV-SPP (right).}}\vspace{-0ex}\label{cyl-speed}
	\end{figure}
	
	 The channel flow past a circular cylinder is a canonical benchmark problem for studying vortex shedding phenomena as the Reynolds number increases.
	
	\subsection{RLDC} The RLDC problem has a domain $\cD=[-1,1]^3$. The flow is assumed to start moving from rest. No-slip boundary conditions are applied to all surfaces except the lid, where we set the velocity as
	\begin{align*}
		\bu_{j,h}|_{lid}=\lp 1+k_j\epsilon\rp<
			(1-x_1^2)^2(1-x_2^2)^2,0,0>^T.
	\end{align*}
	
	\begin{figure}[h!] 
		\begin{center}    
			\subfloat[]
			{\includegraphics[width = 0.35\textwidth, height=0.28\textwidth]{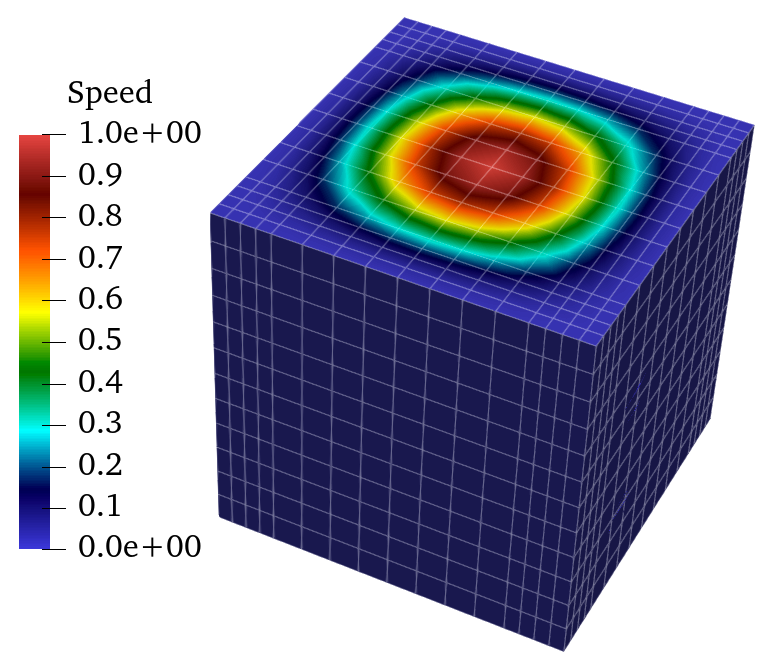}}\hspace{10mm} 
			\subfloat[]{
				\includegraphics[width = 0.3\textwidth, height=0.28\textwidth]{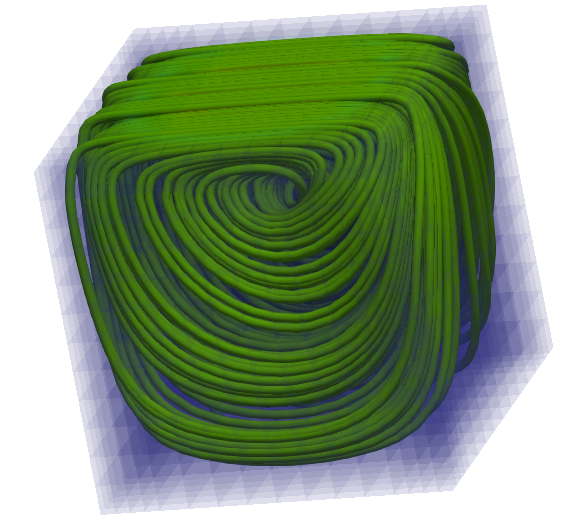}}
		\end{center}\vspace{-3mm}
		\caption{3D RLDC problem with $\bE[\nu]= 10^{-2}$  at $t=10$ with BDF-2-EEV-SPP: (a) Ensemble average of the speed contour together with a coarse mesh, and (b) Streamlines over the pressure contour.}\label{RLDC_3Dn}
	\end{figure}
	
	\begin{figure}[ht]
		\centering
		\includegraphics[width=0.35\textwidth]{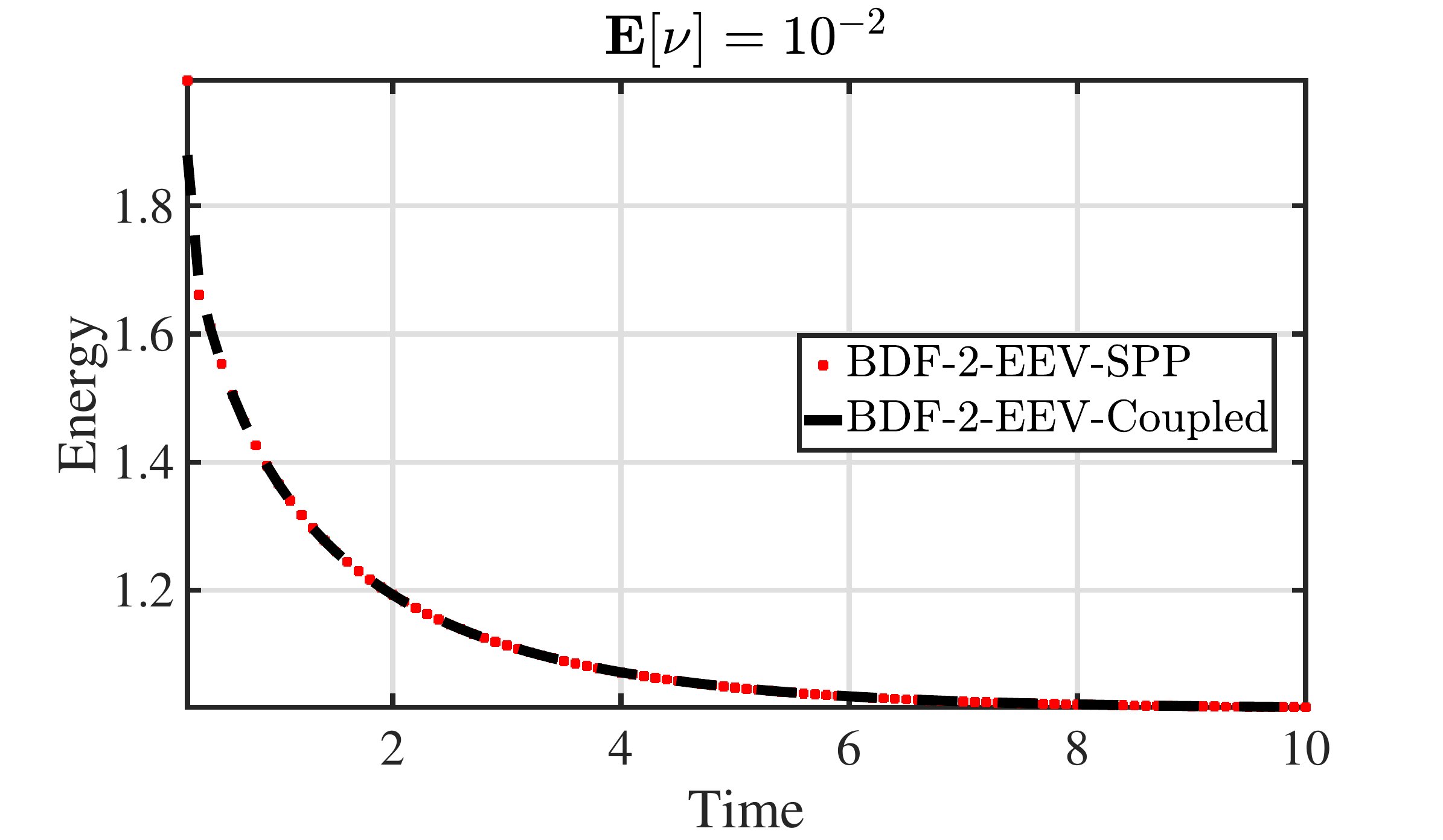}
		\caption{Energy vs. Time plot of the 3D RLDC problem, showing close agreement between the BDF-2-EEV-SPP and BDF-2-EEV-Coupled schemes for $\gamma=100$.}
		\label{energy_vs_time_ldcn}
	\end{figure}
	
	We assume that the viscosity $\nu$ is a continuous uniformly distributed i.i.d. random variable. A random sample of size $J=5$ is generated with a $10\%$ variation about the mean, yielding an expected viscosity $\bE[\nu]=10^{-2}$. The three-dimensional geometry and the corresponding structured hexahedral mesh are generated using Gmsh, resulting in 85,432 DoFs per time step for each realization.
	
	The simulations are performed with the parameters $\Delta t=0.1$, $\mu=1$, the $(\mathbb{Q}_2^3,\mathbb{Q}_1)$ finite element pair, and $\epsilon=10^{-3}$. For the stabilization parameter, we use $\gamma=100$ for both the BDF-2-EEV-SPP scheme and the BDF-2-EEV-Coupled scheme.
	
	The numerical experiments are carried out in deal.II using a parallel MPI implementation of the BDF-2-EEV schemes on 5 cores. The solution of the BDF-2-EEV-SPP scheme at $t=10$ is visualized using ParaView. Fig.~\ref{RLDC_3Dn}(a) shows speed contours of the ensemble average solution obtained with the BDF-2-EEV-SPP scheme over the computational mesh, while Fig.~\ref{RLDC_3Dn}(b) presents the corresponding streamlines over the pressure contour. Fig.~\ref{energy_vs_time_ldcn} shows the Energy vs. Time plot over $[0, 10]$, demonstrating close agreement between the BDF-2-EEV-SPP and BDF-2-EEV-Coupled schemes.
	
	\subsubsection{Effect of EEV on Convection-Dominated Problems} The ensemble eddy viscosity (EEV) stabilization plays an important role in improving the robustness of numerical algorithms for highly convection-dominated and computationally challenging flow problems, such as the regularized 3D RLDC problem at high Reynolds numbers. To investigate the effect of the EEV term, we consider the 3D RLDC problem using the same model parameters described previously, except that we set $\bE[\nu]=10^{-6}$, $T=100$, $\Delta t=1$, and vary $\mu$. The energy evolution is examined in Fig. \ref{mu_change} for several values of the EEV coefficient, $\mu = 0,0.1,0.25,0.5, \text{and } 1$, where $\mu = 0$ corresponds to the scheme without EEV stabilization.
	The Energy vs. Time plots demonstrate a clear stabilizing effect of the EEV term. In the absence of EEV stabilization $(\mu=0)$, the numerical solution becomes unstable and eventually blows up around $t = 35$, whereas the simulations with $\mu>0$ remain stable over a substantially longer time interval.
	
	\begin{figure}[ht]
		\centering
		\includegraphics[width=0.35\textwidth]{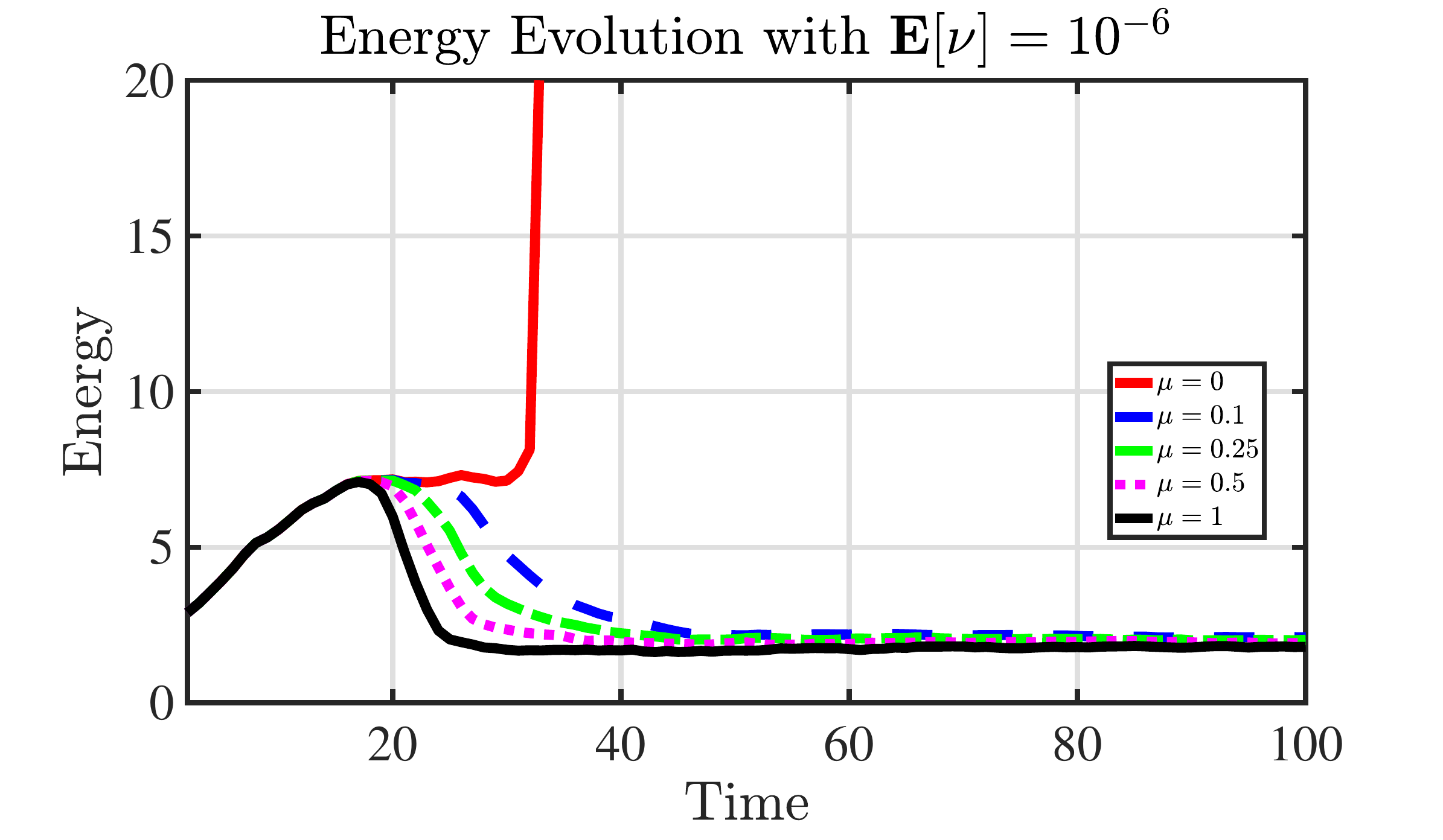}
		\caption{Effect of ensemble eddy viscosity in the 3D RLDC problem with $\bE[\nu]=10^{-6}$: Energy vs. Time for varying values of $\mu$.}
		\label{mu_change}
	\end{figure}
	These numerical observations demonstrate that EEV stabilization is particularly beneficial for convection-dominated stochastic flow simulations, where insufficient numerical dissipation can lead to instability. When combined with grad-div stabilization, the EEV term provides additional control of the unresolved velocity fluctuations while maintaining stable long-time simulations. Thus, the EEV-based ensemble algorithms provide improved robustness compared with their counterparts without EEV stabilization, particularly for challenging high-Reynolds-number flow problems.

	The convergence-rate verification and the experiments on three benchmark problems support the theoretical findings and robustness of EEV schemes in this paper. The present investigation compares two numerical schemes within a UQ framework. In doing so, it contributes to the development of numerical UQ methodologies while demonstrating their applicability to three important classes of real-world flow problems: bluff-body flows with adverse pressure gradients, vortex-dominated wake flows, and 3D lid-driven cavity problem.

	\section{Conclusion} In this study, we developed, analyzed, and tested a fully discrete, high-order accurate BDF-2-EEV-SPP time-stepping splitting scheme for parameterized ensemble flow problems. The scheme is a fast, robust, and accurate penalty-projection method. The elegant linearized scheme is designed in such a way that, at each time step, the system matrix of each subproblem remains the same across all realizations. This shared-matrix structure significantly reduces memory usage and computational time. The use of a large penalty parameter $\gamma$ minimizes the splitting error and ensures the optimal accuracy of the projection method. The algorithm is EEV-regularized so that it can handle convection-dominated problems.
	
	We rigorously proved the stability of the scheme. We also proved that the proposed scheme converges to an equivalent coupled scheme as $\gamma$ increases, thereby ensuring second-order temporal and optimal spatial accuracy. The numerical experiments verify the following: (1) the asymptotic linear convergence of the proposed scheme to the coupled scheme as $\gamma\to\infty$, (2) the second-order temporal convergence of the proposed scheme, and (3) optimal spatial convergence with the $(\mathbb{Q}_2^2,\mathbb{Q}_1)$ element pair. The outcomes of these experiments support the theoretical findings of this paper.
	
	To evaluate the performance of the proposed scheme, we implemented both the proposed scheme and the equivalent coupled scheme on benchmark two- and three-dimensional problems: (1) a $30\times 10$ channel flow over a unit square step, (2) a channel flow past a circular cylinder, and (3) 3D RLDC problems. We find that the proposed scheme performs well and shows excellent agreement with the coupled scheme for all three problems.

	For more complex and realistic problems, the proposed scheme is expected to perform better than the coupled scheme. This is because each subproblem in the proposed scheme is easier to solve than the corresponding coupled saddle-point problem, and the combined computational cost of solving all subproblems is lower than that of the coupled scheme. Although the scheme is stable without an explicit time-step restriction under the stated lower-bound condition on the EEV parameter $\mu$, the restrictive condition on the viscosity data needs to be examined further.
	
	For future work, we plan to explore Yosida-type algebraic splitting \cite{AMRX17,REBHOLZ2020112366} for block saddle-point problems arising in efficient UQ computations. We also plan to extend the proposed approach to parameterized NSE problems using Physics-Informed Neural Networks (PINNs), following the work in \cite{aziz2025self}.
	
	\section{Acknowledgments}This research was supported by the National Science Foundation under Grant DMS-2425308. We also gratefully acknowledge the Alabama Supercomputer Authority (ASA) for providing computational resources.
	
	\bibliographystyle{plain}
	\bibliography{BE}
	\appendix
	\section{Proof of Lemma \ref{lemma-L3-infty}}\label{appendix-C}
	\begin{proof}
		Assume that $\bu_j(t_n)\in H^{k+1}(\mathcal D)^d$ with $k\ge 2$, and let
		$I_h\bu_j(t_n)$ be a suitable finite element interpolant. Define $\boldsymbol{\phi}_h^n
		:=
		\bu_{j,h}^n-I_h(\bu_j(t_n))$, and write
		\[
		\bu_{j,h}^n
		=
		\left(\bu_{j,h}^n-I_h(\bu_j(t_n))\right)
		+
		I_h(\bu_j(t_n))=\boldsymbol{\phi}_h^n+I_h(\bu_j(t_n)).
		\]
		Then,
		\begin{align}
			\|\nabla \bu_{j,h}^n\|_{L^3}
			+\|\bu_{j,h}^n\|_{L^\infty}
			&\le
			\|\nabla \boldsymbol{\phi}_h^n\|_{L^3}
			+\|\boldsymbol{\phi}_h^n\|_{L^\infty}
			+\|\nabla I_h(\bu_j(t_n))\|_{L^3}
			+\|I_h(\bu_j(t_n))\|_{L^\infty}.
			\label{eq:interp_split}
		\end{align}
		Using the Sobolev embedding and the discrete inverse
		inequality gives
		\[
		\|\nabla \boldsymbol{\phi}_h^n\|_{L^3}
		\le
		C h^{-1/2}\|\nabla \boldsymbol{\phi}_h^n\|.
		\]
		Moreover, using the discrete inverse inequality together with the Sobolev embedding,
		\[
		\|\boldsymbol{\phi}_h^n\|_{L^\infty}
		\le
		C h^{-1/2}\|\boldsymbol{\phi}_h^n\|_{L^6}
		\le
		C h^{-1/2}\|\nabla \boldsymbol{\phi}_h^n\|.
		\]
		Thus,
		\begin{equation}
			\|\nabla \bu_{j,h}^n\|_{L^3}
			+\|\bu_{j,h}^n\|_{L^\infty}
			\le
			C h^{-1/2}\|\nabla \boldsymbol{\phi}_h^n\|
			+
			\|\nabla I_h(\bu_j(t_n))\|_{L^3}
			+
			\|I_h(\bu_j(t_n))\|_{L^\infty}.
			\label{eq:inverse_phi}
		\end{equation}
		By the regularity of $\bu_j$ and the stability of the interpolation operator,
		\[
		\|\nabla I_h(\bu_j(t_n))\|_{L^3}
		+
		\|I_h(\bu_j(t_n))\|_{L^\infty}
		\le C.
		\]
		Hence,
		\begin{equation}
			\|\nabla \bu_{j,h}^n\|_{L^3}
			+\|\bu_{j,h}^n\|_{L^\infty}
			\le
			C h^{-1/2}\|\nabla \boldsymbol{\phi}_h^n\|+C.
			\label{eq:inverse_bound}
		\end{equation}
		Using the triangle inequality, the interpolation error bound, and the velocity error estimate in \eqref{error-eqn-coupled}, we obtain
		\begin{align}
			\|\nabla \boldsymbol{\phi}_h^n\|
			=
			\|\nabla(\bu_{j,h}^n-I_h(\bu_j(t_n)))\|&\le
			\|\nabla(\bu_{j,h}^n-\bu_j(t_n))\|
			+
			\|\nabla(\bu_j(t_n)-I_h(\bu_j(t_n)))\|
			\nonumber\\
			&\le
			\|\nabla(\bu_{j,h}^n-\bu_j(t_n))\|
			+
			C h^k\nonumber\\&\le C\left(\frac{h^k}{\Delta t^{\frac12}}+\Delta t^{\frac32}\right)+Ch^k.
			\label{eq:phi_bound}
		\end{align}
		Therefore		\begin{align}
			\|\nabla \bu_{j,h}^n\|_{L^3}
			+\|\bu_{j,h}^n\|_{L^\infty}
			&\le
			C h^{-1/2}
			\left(
			h^k\Delta t^{-1/2}
			+
			\Delta t^{3/2}
			+
			h^k
			\right)+C
			\nonumber\\
			&\le
			C\left(
			h^{k-\frac12}\Delta t^{-1/2}
			+
			h^{-\frac12}\Delta t^{3/2}
			+
			h^{k-\frac12}
			\right)+C.
			\label{eq:final_prebound}
		\end{align}
		For $k\ge 2$, assuming $O(h^{2k-1})
		\le
		\Delta t
		\le O(h^{1/3})$ gives $$\max_{1\le n\le M}
		\left(
		\|\nabla \bu_{j,h}^n\|_{L^3}
		+
		\|\bu_{j,h}^n\|_{L^\infty}
		\right)
		\le 3+C.$$
		Setting $C_*:=3+C$ and	using the discrete \textit{inf-sup} condition yields the pressure bound, which completes the proof.
	\end{proof}

	\section{Proof of Lemma \ref{uniform-boundedness-lemma-proof}}  \label{appendix}
	\begin{proof}
		Base step: $\bhu_{j,h}^0=I_h(\bu_j(0,\bx))$ and $\bhu_{j,h}^1=I_h(\bu_j(\Delta t,\bx)),$ where $I_h$ is an appropriate interpolation operator. The regularity assumption on the true solution $\bu_j$ gives $\|\bhu_{j,h}^0\|_{L^\infty}\le C_*$ and $\|\bhu_{j,h}^1\|_{L^\infty}\le C_*$ for some constant $C_*>0$.\\
		Inductive step: Assume that, for some $L\in\mathbb{N}$ and $L<M$, $\|\bhu_{j,h}^n\|_{L^\infty}\le C_*$ holds for $n=0,1,\cdots,L$. Then, using the triangle inequality and Lemma \ref{lemma-L3-infty}, we obtain
		\begin{align*}
			\|\bhu_{j,h}^{L+1}\|_{L^\infty}\le\|\bhu_{j,h}^{L+1}-\bu_{j,h}^{L+1}\|_{L^\infty}+K_*.
		\end{align*}
		Using the discrete inverse inequality in three dimensions yields
		\begin{align}
			\|\bhu_{j,h}^{L+1}\|_{L^\infty}\le Ch^{-\frac32}\|\bhu_{j,h}^{L+1}-\bu_{j,h}^{L+1}\|+K_*.
		\end{align}
		Next, using the induction hypothesis and carrying out the velocity convergence theorem proof up to \eqref{after-gronwall-n}, we obtain
		\begin{align}
			&\|\bhu_{j,h}^{L+1}\|_{L^\infty}\le K_*+\frac{C}{h^{\frac32}\gamma^{\frac{1}{2}}} \exp\left\{\frac{C}{\alpha_{\min}} \left(\frac{\Delta t}{h^3\alpha_{\min}}+1\right)\right\}.
		\end{align}
		For a fixed mesh and time-step size, letting $\gamma\rightarrow \infty$ yields $\|\bhu_{j,h}^{L+1}\|_{L^\infty}\le K_*$. Setting $C_*=K_*$, we obtain the desired bound. Hence, by strong induction, $\|\bhu_{j,h}^{n}\|_{L^\infty}\le C_*$ holds true for $0\le n\le M$.
	\end{proof}
	
\end{document}